\documentclass[11pt]{amsart}

\usepackage{amssymb,amsmath}
\usepackage{amsthm,enumerate}
\usepackage{graphicx,rotating}
\usepackage{qsymbols}
\usepackage{latexsym}
\usepackage{amsbsy}
\usepackage{mathbbol}
\usepackage{amsfonts}
\usepackage{enumitem}
\usepackage{txfonts}
\usepackage{dsfont}
\usepackage{bbm}
\usepackage{dsfont,txfonts}

\usepackage{color}%%%%%%%%%%%%%%%%%%%%%%%%%%%%%%%

\textheight=650pt \textwidth=450pt \oddsidemargin=8pt \evensidemargin=8pt \topmargin=0pt
 \headheight=0pt
 \parindent=8pt
 \parskip=5pt

%%%%%%%%%%%%%%%
%%                                          %%
%% Uncomment next line to change            %%
%% the type of equation numbering           %%
%%                                          %%
%%%%%%%%%%%%%%%%%%%%%%%%%%%%%%%%%%%%%%%%%%%%%%
%\numberwithin{equation}{section}
%%%%%%%%%%%%%%%%%%%%%%%%%%%%%%%%%%%%%%%%%%%%%%
%%                                          %%
%% For Axiom, Claim, Corollary, Hypothezis, %%
%% Lemma, Theorem, Proposition              %%
%% use \theoremstyle{plain}                 %%
%%                                          %%
%%%%%%%%%%%%%%%%%%%%%%%%%%%%%%%%%%%%%%%%%%%%%%
%\theoremstyle{plain}

\newtheorem{theorem}{Theorem}[section]
\newtheorem{lemma}[theorem]{Lemma}

%%%%%%%%%%%%%%%%%%%%%%%%%%%%%%%%%%%%%%%%%%%%%%
%%                                          %%
%% For Assumption, Definition, Example,     %%
%% Notation, Property, Remark, Fact         %%
%% use \theoremstyle{remark}                %%
%%                                          %%
%%%%%%%%%%%%%%%%%%%%%%%%%%%%%%%%%%%%%%%%%%%%%%
%\theoremstyle{remark}
\newtheorem{definition}[theorem]{Definition}

\newtheorem*{comments}{Comments}

\newtheorem*{remarks}{Remarks}

%%%%%%%%%%%%%%%%%%%%%%%%%%%%%%%%%%%%%%%%%%%%%%
%% Please put your definitions here:        %%
%%%%%%%%%%%%%%%%%%%%%%%%%%%%%%%%%%%%%%%%%%%%%%

\newcommand{\tw}{w'}
\newcommand{\tz}{z'}
\newcommand{\ttz}{{z''}}
\newcommand{\V}{\mathbb{V}}
\newcommand{\X}{\mathcal{X}}
\newcommand{\Y}{\mathcal{Y}}
\newcommand{\M}{\mathcal{M}_+}
\newcommand{\B}{{\rho}}
\renewcommand{\P}{{P}}
\newcommand{\Pc}{\mathcal{P}}
\newcommand{\supp}{{\rm supp}}

\newcommand{\R}{\mathbb{R}}
\newcommand{\N}{\mathbb{N}}
\newcommand{\Z}{\mathbb{Z}}

\newcommand{\1}{\mathds{1}}
\newcommand{\Pb}{\mathbb{P}}
\newcommand{\E}{\mathbb{E}}

\begin{document}

\title[Entropic curvature on graphs.]
{Entropic curvature on graphs  \\along  Schr\"odinger bridges at zero temperature.}
\author{ Paul-Marie Samson}

\date{\today}

\address{P.-M. Samson, Univ Gustave Eiffel, Univ Paris Est Creteil, CNRS, LAMA UMR8050 F-77447 Marne-la-Vallée, France }
\email{ paul-marie.samson@univ-eiffel.fr}
\keywords{Displacement convexity property, Ricci curvature, graphs, Bernoulli Laplace model, discrete hypercube, Schr\"odinger bridges, transport-entropy inequalities, concentration of measure, Pr\'ekopa-Leindler inequalities}
\subjclass{60E15, 32F32 and 39A12}
\thanks{This research is partly funded by the B\'ezout Labex, funded by ANR, reference ANR-10-LABX-58. The  author is supported by a grant of the Simone and Cino Del Duca Foundation.}

\begin{abstract}
 Lott-Sturm-Villani theory of curvature on geodesic spaces has been extended to discrete graph spaces by C. L\'eonard by replacing $W_2$-Wasserstein geodesics by Schr\"odinger bridges in the definition of entropic curvature \cite{Leo13, Leo17,Leo16}.  As a remarkable fact, as a temperature parameter goes to zero, these Schr\"odinger bridges are supported by  geodesics  of the space. We analyse this property on  discrete graphs to reach  entropic curvature  on  discrete spaces. Our approach  provides   lower bounds for the entropic curvature for several examples of graph spaces: the lattice $\Z^n$ endowed with the counting measure, the discrete cube endowed with product probability measures, the circle,
 the complete graph, the Bernoulli-Laplace model. Our general results  also apply to a large class of graphs which are not specifically studied in this paper.  
 
 As opposed to Erbar-Maas  results on graphs  \cite{Maa11,EM12,EM14},  entropic curvature results of this paper   imply  new Pr\'ekopa-Leindler type of inequalities on discrete spaces, and new transport-entropy inequalities related to  refined concentration properties for the graphs mentioned above. For example on the discrete hypercube $\{0,1\}^n$ and for the Bernoulli Laplace model, a new $W_2-W_1$ transport-entropy inequality is reached, that can not be derived by usual induction arguments over the dimension $n$.  As a surprising fact, our method also gives  improvements of weak transport-entropy inequalities (see \cite{GRST17}) associated to  the so-called convex-hull method by Talagrand \cite{Tal95}. 
 \end{abstract}

%%%%%%%%%%%%%%%%%%%%%%%%%%%%%%%%%%%%%%%%%%%%%%
%% Please use \tableofcontents for articles %%
%% with 50 pages and more                   %%
%%%%%%%%%%%%%%%%%%%%%%%%%%%%%%%%%%%%%%%%%%%%%%

\maketitle

The paper starts with a brief overview  about known results  concerning entropic curvature  on discrete graphs. Then we introduce  a  specific  entropic curvature property on graphs  (see Definition \ref{defcourb}), derived from  C. L\'eonard approach \cite{Leo13, Leo17,Leo16}, and  dealing with Schr\"odinger bridges at zero temperature.   

The main curvature results  are given in section 2, with their connections to
new transport-entropy inequalities.
  The concentration properties following from such transport-entropy inequalities are not developed in the present paper. For that purpose, we refer to \cite{Sam17b} and  \cite{GRST17} by Gozlan \& al, where the link between transport-entropy inequalities and concentration properties are widely investigated.

 The strategy of proof, presented in section 3, uses the so called {\it slowing-down procedure} for Schr\"odin-ger bridges associated to jump processes on discrete spaces pushed forward by C. L\'eonard.  The key theorem of the present paper,   Theorem \ref{limdevseconde} (with Lemma \ref{lemconvex}), is derived  from  this procedure, which consists of decreasing a temperature parameter $\gamma$ to 0 in order to construct  $W_1$-Wasserstein  geodesics on the set of  probability measures on the graph. All the curvature results  of this paper are derived  from  Theorem \ref{limdevseconde}.  Our strategy  also applies for many other graph spaces which are not considered in this paper. The main goal of this work is  to push forward Leonard's  slowing-down procedure  to reach entropic curvature on graphs through  few significant new results. In a forthcoming paper, one will give sufficient geometric conditions to reach entropic curvature property on non-specific graphs from Theorem \ref{limdevseconde}.

\tableofcontents

\vspace{-1,5 cm}

\section{Introduction : Schr\"odinger bridges for entropic curvature }

For any measurable space $\Y$, we note $\M(\Y)$ the set of all non-negative $\sigma$-finite measures on $\Y$ and  $ {\mathcal P}(\Y)$   the set of all probability measures on $\Y$.

Let $(\X,d)$ be a geodesic space equipped with a reference measure $m\in \M(\X)$.
According to  Lott-Sturm-Villani theory of curvature  on geodesic spaces \cite{LV09,Stu06a,Stu06b,Vil09}, a lower bound $K\in \R$ on the entropic curvature of the space $(\X,d,m)$  is  characterized by a {\it $K$-convexity property} of the {\it relative entropy} along {\it constant speed geodesics}  of the { \it Wasserstein  space} $(\Pc_2(\X),W_2)$. 
Let us precise this property for the non specialist reader. 
By definition,  the  {\it relative entropy} of a probability measure $q$ on a measurable space $\Y$ with respect to a probability measure   $r\in {\mathcal P}(\Y)$, also called Kullback-Leibler distance between $q$ and $r$,  is  given by 
 \[H(q|r):= \int_{\Y} \log(dq/dr) \,dq\qquad \in [0, \infty], \]         
if $q$ is absolutely  continuous with respect to $r$ and    $H(q|r):=+\infty$ otherwise. 
As explained in \cite{Leo14}, this definition extends to unbounded measures $r\in\M(\Y)$
as follows. Since $r$ is a  $\sigma$-finite measure, there exists some measurable function $w:\Y\to [0,+\infty)$ such that 
\[z_w:=\int e^{-w} dr<\infty.\]
Define the probability measure $r_w= \frac{e^{-w}}{z_w} \,r$. Then the definition of $H(q|r)$ is given for all $q\in  {\mathcal P}(\Y)$ such that 
$\int w \,dq<+\infty$ by
 \[H(q|r)= H(q|r_w)-\int w\,dq -\log z_w\quad \in(-\infty,+\infty].\]
According to \cite{Leo14}, this definition makes sense since the right-hand side does not depends on the function $w$ satisfying $z_w<\infty$ and $\int w \,dq<+\infty$.
We refer to \cite{Leo14} for more details and properties about this definition of relative entropy with unbounded $\sigma$-finite measures.
Let 
$\Pc_2(\X)$ denote the space of probability measures with second moment, and let $W_2$ be the {\it Wasserstein distance of order 2} on $\Pc_2(\X)$: namely, for any $\nu_0,\nu_1\in \Pc_2(\X)$,
\begin{equation}\label{defW2}
W_2(\nu_0,\nu_1):=\left(\inf_{\pi\in \Pi(\nu_0,\nu_1)}\iint d(x,y)^2 d\pi(x,y)\right)^{1/2},
\end{equation}  
where $\Pi(\nu_0,\nu_1)$ is the set of all probability measures on the product space $\X\times \X$ with first marginal $\nu_0$ and second marginal $\nu_1$ (also called {\it transference plans} from $\nu_0$ to $\nu_1$).  
A path $(\nu_t)_{t\in [0,1]}$ in $\Pc_2(\X)$ is a {\it  constant speed $W_2$-geodesic} from $\nu_0$ to $\nu_1 $ if for all $0\leq s<t\leq 1$, $W_2(\nu_s,\nu_t)=(t-s)W_2(\nu_0,\nu_1)$. 
The {\it $K$-convexity property} of the relative entropy $H(\cdot|m)$ is expressed as follows: for any $\nu_0,\nu_1\in \Pc_2(\X)$ whose supports are included in the support of $m$, there exists a constant speed $W_2$-geodesic $(\nu_t)_{t\in [0,1]}$ from $\nu_0$ to $\nu_1$ such that  for all $t\in[0,1]$,
\begin{eqnarray}\label{deplace}
H(\nu_t|m)\leq (1-t)\, H(\nu_0|m)+t \,H(\nu_1|m)-\frac K2\, t(1-t)\,W_2^2(\nu_0,\nu_1).
\end{eqnarray}
If such a property holds, one says that the Lott-Sturm-Villani entropic curvature of the space $(\X,d,m)$ is bounded from below by $K$.

Property \eqref{deplace} with $K=0$ has been discovered by McCann on the Euclidean space  $(\X,d)=(\R^d,|\,\cdot\,|_2)$ endowed with the Lebesgue measure \cite{McC97}. More generally, as a remarkable fact, when  $\X$  is a Riemannian manifold equipped with its  geodesic distance $d$ and  a measure $m$  with density $e^{-V}$ with respect to the volume measure,  property \eqref{deplace} is equivalent to the so-called Bakry-Emery curvature condition $CD(K,\infty)$: $\rm{ Ricc} +{\rm Hess}(V)\geq K$ (see e.g. \cite{Bak94}). As a consequence, due to the wide range of implications of this notion of curvature, 
 property \eqref{deplace} has been  used as a guideline by Lott-Sturm-Villani  to define curvature on geodesic spaces (see also \cite{AGS08,AGS14}) and then by  different authors to  propose entropic definitions of curvature on discrete spaces : Bonciocat-Sturm \cite{BS09}, Ollivier-Villani on the discrete cube \cite{OV12}, Erbar-Maas \cite{Maa11, EM12,EM14}, Mielke \cite{Mie13}, L\'eonard \cite{Leo13, Leo17,Leo16}, Hillion \cite{Hil14,Hil17}   and Gozlan-Roberto-Samson-Tetali \cite{GRST14}. 
 
 This paper concerns L\'eonard entropic approach of curvature in discrete setting, from which we also recover results from \cite{GRST14} and \cite{Hil14}. In discrete spaces,  several other notions of curvature have already been studied which are not considered in this paper  : the coarse Ricci curvature  \cite{Oll09, Oll13},  
 the Bochner-Bakry-Emery approach with the (Bochner) curvature  \cite{CDP09, KKRT16} and the curvature dimension or exponential curvature dimension inequality \cite{BHLLMY15}.

 For  $m$ as  unique invariant probability measure of a Markov kernel on a  discrete space $\X$, a first global entropic approach has been proposed by   M. Erbar and J. Maas \cite{Maa11,EM12,EM14}. The core of their approach is the  construction of an abstract Wasserstein distance ${\mathcal W}_2$ on  $\Pc(\X)$, that replaces the Wasserstein distance $W_2$ in \eqref{deplace}.  This distance ${\mathcal W}_2$ is defined using a discrete analogue of the Benamou-Brenier formula for $W_2$, in order to  provide a Riemannian  structure for the probability space $\Pc(\X)$. Unfortunately,  there is no static definition of ${\mathcal W}_2^2$ as a minimum of a cost  among  transference plans $\pi$ as in the definition \eqref{defW2} of $W^2_2$. 
Erbar-Maas entropic Ricci curvature  definition  satisfies a tensorization property for product of graphs that allows to consider high dimensional spaces \cite{EM12}. This definition has been used to get lower bounds on curvature  for several models of graphs : the discrete circle, the complete graph, the discrete hypercube \cite{Maa11,EM12}, the Bernoulli-Laplace model, the random transposition model \cite{EMT15,FM16}, birth and death processes, zero-range processes \cite{FM16}, Cayley graphs of non-abelian groups, weakly interacting Markov chains such as the Ising model \cite{EHMT17}.  The main strategy of all this papers is to prove an equivalent criterion of Erbar-Maas entropic  curvature given in \cite{EM12},  by identifying some discrete analogue of the Bochner identity in continuous setting.

 Finding a minimizer in the definition of $W_2(\nu_0,\nu_1)$ is known as the quadratic Monge-Kantorovich problem. By the so-called {\it slowing down procedure}, T. Mikami \cite{Mik04} and then C. L\'eonard \cite{Leo12,  Leo13, Leo16,Leo17} show that  the quadratic Monge-Kantorovich problem in continuous, but also the $W_1$-Monge-Kantorovich problem in discrete, can be understood as the limit  of a sequence of entropy minimization problems, the so-called {\it Schr\"odinger problems}. 
  
 In this paper, the {\it slowing down procedure}, described further,  is used to prove entropic curvature properties of type \eqref{deplace} as $\X$ is a   graph, endowed with its natural graph distance $d=d_\sim$,  and with a measure $m$, reversible with respect to some generator $L$.  More precisely, in  property \eqref{deplace},   constant speed $W_2$-geodesics $(\nu_t)_{t\in [0,1]}$  are replaced by  constant speed $W_1$-geodesics where  $W_1$ is {\it the Wasserstein distance of order 1}  given by
 \[W_1(\nu_0,\nu_1):=\inf_{\pi\in \Pi(\nu_0,\nu_1)}\iint d(x,y)\,d\pi(x,y), \qquad \nu_0,\nu_1\in \Pc(\X).\]     
  As explained below, each of these constant speed $W_1$-geodesics  is the    limit path of a sequence  of Schr\"odinger briges  $(\widehat Q^\gamma_t)_{t\in [0,1]}$ indexed by a temperature parameter $\gamma>0$, as $\gamma$ goes to zero. Given two probability measures $\nu_0$ and $\nu_1$, this  constant speed $W_1$-geodesic selected from this cooling down process (or slowing down procedure) is unique. 
  According to its construction, we call it  {\it Schr\"odinger brige at zero temperature} and we denote it  $(\widehat Q_t)_{t\in [0,1]}$ throughout this paper ($\widehat Q_0=\nu_0$ and $\widehat Q_1=\nu_1$).  For $x,y\in \X$, one denotes $({Q_t}\!^{ x,y})_{t\in [0,1]}$ the  
  Schr\"odinger brige at zero temperature  from the Dirac measure $\delta_x={Q_0}\!^{ x,y}$ to the Dirac measure $\delta_y={Q_1}\!^{ x,y}$. Actually the bridge $(\widehat Q_t)_{t\in [0,1]}$ is a mixture of Schr\"odinger briges from Dirac measures on the support of $\nu_0$  to Dirac measures  on the support of $\nu_1$, according to a  selected transference plan denoted $\widehat \pi\in \Pi(\nu_0,\nu_1)$,  that achieves $W_1(\nu_0,\nu_1)$. Namely, one has for any $z\in \X$
  \begin{eqnarray}\label{limitbridge} 
 \widehat Q_t(z)= \iint {Q_t}\!^{ x,y}(z)\, d\widehat \pi(x,y), \qquad \mbox{with}\qquad \iint d(x,y)\,d\widehat \pi(x,y)=W_1(\nu_0,\nu_1).
 \end{eqnarray}
Observe that the set of minimizers of $W_1(\nu_0,\nu_1)$, also called $W_1$-optimal couplings of $\nu_0$ and $\nu_1$, is a convex set that is not necessarily reduced to a singleton. However, according to Leonard's paper \cite{Leo16}, we know that given $\nu_0,\nu_1$,  $\widehat \pi$ is uniquely determined, as a minimizer of a strictly convex optimization problem (see \cite[Result 0.2]{Leo16}).
  In our setting of property \eqref{deplace} on graphs, the  curvature term $ \,W_2^2(\nu_0,\nu_1)$ is also replaced by some transport cost $C_t(\widehat \pi)$ that depends on the selected $W_1$-minimizer $\widehat \pi\in \Pi(\nu_0,\nu_1)$, and  may also  depend  on the parameter $t\in (0,1)$. Let $\Pc_b(\X)$ denotes the set of probability measures on $\X$ with finite support. The analogue of property \eqref{deplace} on discrete graphs at the focus of this work is the following.
 \begin{definition}\label{defcourb} On the discrete space $(\X,d,m,L)$, one says that  the 
  relative entropy is $C$-displacement convex where $C=(C_t)_{t\in[0,1]}$,  if for any probability measure $\nu_0,\nu_1\in \Pc_b(X)$, the Schr\"odinger bridge at zero temperature $(\widehat Q_t)_{t\in [0,1]}$ from $\nu_0$ to $\nu_1$, satisfies for any $t\in(0,1)$, 
 \begin{eqnarray}\label{deplacebis}
H(\widehat Q_t|m)\leq (1-t) H(\nu_0|m)+t \,H(\nu_1|m)- \frac{t(1-t)}2C_t(\widehat \pi).
\end{eqnarray}
\end{definition}
 For some of the graphs studied in this paper,  the cost  $C_t(\widehat \pi)$ is bigger than  $  K\Big(\iint d(x,y)\,d\widehat \pi(x,y) \Big)^2=K\,W_1(\nu_0,\nu_1)^2$ for any $t\in (0,1)$ with $K>0$. Such a property is also a consequence  of Erbar-Maas positive entropic  curvature since ${\mathcal W}_2^2\geq2 W_1^2$  (see \cite[Proposition 2.12]{EM12}). However, their convexity property of entropy deals with ${\mathcal W}_2$-geodesics on $\Pc(\X)$, whereas property \eqref{deplacebis} deals with  $W_1$-geodesics. 
 As a definition in this paper, the largest constant $K\in \R$ so that 
 \eqref{deplacebis} holds  with $C_t(\widehat \pi)=K \,W_1(\nu_0,\nu_1)^2$ for any $\nu_0,\nu_1\in \Pc_b(X)$ and any $t\in(0,1)$ is called, if it exists,  {\it the $W_1$-entropic curvature} of
the space $(\X,d,m,L)$.  

 Given a non negative cost function $c:\N\to \R^+$, let us denote 
 \[T_c(\widehat \pi):= \iint c(d(x,y))\,d\widehat \pi(x,y)\] and $T_2: =T_c$ for the   square function $c(d)=d^2$, $d\geq 0$. For some graphs in this paper, in order to compare our results with the $W_2^2$ cost that appears in \eqref{deplace} on geodesics spaces,  we prove that   $C_t(\widehat \pi)\geq {K} \,T_{c_2}(\widehat \pi)$ with $K\geq 0$, where one denotes by $c_2$ any universal cost function (independent of any characteristic of the graph) satisfying
  \[\frac{d(d-1)}2\leq c_2(d)\leq d^2\] and which is equivalent to  the square function at infinity
 \[\lim_{d\to \infty} \frac{c_2(d)}{d^2}=1.\]
 For such a cost function, one has  for any $\varepsilon \in (0,1)$ and  any $d\in \N$, 
 \[ c_2(d)\geq (1-\varepsilon) d(d-1) - \alpha(\varepsilon) d,\]
 where  $\alpha$ is the non-negative  function given by $\alpha(\varepsilon):=\sup_{k\in \N^*} \left\{(1-\varepsilon) (k-1)- \frac{c_2(k)}{k}\right\}$ ($\alpha(\varepsilon)=0$ for $\varepsilon \in (1/2,1)$).
 It follows that $T_{c_2}(\widehat \pi)$ is controlled by the Wasserstein distances as follows, for any $\varepsilon \in (0,1)$
 \[ T_{c_2}(\widehat \pi)
 \geq \sup_{\varepsilon\in(0,1)} \left\{(1-\varepsilon) W_2^2(\nu_0,\nu_1)- [(1-\varepsilon) +\alpha(\varepsilon)]  W_1(\nu_0,\nu_1)\right\}\geq \frac12  \left(W_2^2(\nu_0,\nu_1)-W_1(\nu_0,\nu_1)\right)\geq 0.\]
 Therefore,
the cost $T_{c_2}(\widehat \pi)$ can be interpreted as a discrete analogue of the cost $W_2^2(\nu_0,\nu_1)$ in the usual $K$-convexity property \eqref{deplace} on geodesic spaces. 
As a definition in this paper,  {\it the $T_2$-entropic curvature} of the space $(\X,d,m,L)$ is the largest constant $K\in \R$ so that there exists a cost $c_2$ satisfying the above conditions and such that 
 \eqref{deplacebis} holds  with $C_t(\widehat \pi)=K \, T_{c_2}(\widehat \pi)$ for any $\nu_0,\nu_1\in \Pc_b(X)$ and any $t\in(0,1)$. 
  
Due to the abstract definition of the cost ${\mathcal W}_2^2$ with a discrete
analogue of Benamou-Brenier formula, we don't know how to compare ${\mathcal W}_2^2$ with costs involving transference plans and the discrete structure of the graph such as $T_{c_2}$ or any other proposed costs $C_t$ of this paper, excepted with $W_1^2$ for which ${\mathcal W}_2^2\geq 2 W_1^2$. As a consequence, it is still a challenging problem to reach most of the results of the present paper from Erbar-Maas approach of entropic curvature on discrete spaces. 

According to the property of the function  $c_2$, the cost $c_2(d(x,y))=0$ if $x$ and $y$ are neighbours. Therefore the  transport-cost $T_{c_2}$ does not well measure the distance between probabilities with  close supports. Observe that  such type of costs also appear in the paper by Bonciocat-Sturm \cite{BS09} in their definition of  rough (approximate) lower curvature.

For the graph with positive $W_1$-entropic curvature studied in this paper (the complete graph, the discrete hypercube and  the Bernoulli-Laplace model), one may bound from below  the cost $C_t(\widehat \pi)$ by different  symmetric versions of  weak transport  cost denoted by $\widetilde T_t(\widehat \pi)$ and bounded from below by the so-called weak optimal transport costs introduced in the paper \cite{GRST17}. Weak transport-entropy inequalities where introduced by K. Marton in the seminal work \cite{Mar96b} in order to get refined concentration properties for product measure, related to concentration's results derived from the so-called ``Convex hull method'' by M. Talagrand \cite{Tal95}.  
It was pushed forward in \cite{GRST14} that these costs are related to  displacement convexity property of entropy along $W_1$-geodesics in the case of the complete graph and of  the discrete hypercube. From the present paper, we learn that  same observation extends to models without product structure with different kind of weak transport costs, like for the Bernoulli-Laplace model.  Actually, our approach seems  very efficient to prove (weak) transport-entropy inequalities  since we discover new ones and    get improved versions of the known ones.  

As a guideline for other graphs, we present in this paper for the discrete hypercube and the Bernoulli Laplace model how to easily reach modified logarithmic Sobolev inequalities from  the $C$- displacement convexity property \eqref{deplacebis}. The strategy is to analyse the $C$-displacement convexity property \eqref{deplacebis} as $t$ goes to zero when the cost  $C_t(\widehat \pi)$ is lower bounded by some  weak transport costs  $\widetilde T_t(\widehat \pi)$. It may provide  different kinds of modified logarithmic Sobolev inequalities, depending on the model and the  structure of  weak transport cost $\widetilde T_t(\widehat \pi)$. Contrarily to the Erbar-Maas entropic curvature approach, connections and comparisons with other known modified logarithmic Sobolev inequalities with optimal constants are not always easy to handle. It still remains a challenge to improve our strategy or find other ways to reach modified logarithmic Sobolev inequalities from the use of Schr\"odinger bridges at zero temperature in discrete spaces.      

Applying  usual duality arguments,  the $C$-displacement convexity property \eqref{deplacebis} also implies new kinds of curved Prékopa-Leindler inequalities, 
as opposed to Erbar-Maas entropic approach of curvature due to the abstract definition of ${\mathcal W_2}$  (see Theorem  \ref{prek}).

Following the line of the paper \cite{GRST14}, a tensorization property of the $C$-displacement convexity property holds involving  Knothe-Rosenblatt coupling (see Theorem \ref{tenso}).
 
{In the present paper, a $C$-displacement convexity property is proved for the following discrete spaces~: the lattice $\Z^n$ endowed with the counting measure (see Theorem \ref{thmZ}), the discrete hypercube endowed with product probability measures (see Theorem \ref{thmcube}), the discrete circle endowed with uniform measure (see Theorem \ref{thmcircle}), 
 the complete graph (see Theorem \ref{thmcomplete}), the Bernoulli-Laplace model (see Theorem \ref{thmslicecube}).  
 For all these graphs, one gets a non-negative lower bound for their $W_1$ or  $T_2 $-entropic curvature. 
 
 In a forthcoming paper, starting from the key Theorem \ref{limdevseconde}, one will give sufficient geometric conditions on balls of radius 2, available on any graph space $(\X,d,m,L)$, that give lower bounds on  $W_1$ or  $T_2 $-entropic curvature. Other  examples of graphs will be studied, like the random transposition model on the symmetric group $S_n$ (for which the $W_1$-entropic curvature is lower bounded by $4/n^2$) or the multinomial distribution on the set $\X:=\{(x_1,\ldots, x_d)\in \N^d\,|\, x_1+\cdots +x_d=N\}$. Examples of graphs with negative entropic curvature like trees and also measures with interaction potential will be also considered.} 
 
For more comprehension, let us briefly explain the  {\it slowing down procedure}   in its original continuous setting  before considering discrete spaces.  Let $R^\gamma$ be the law of a reversible Brownian motion with diffusion coefficient $\gamma >0$ on the set $\Omega$ of continuous paths from $[0,1]$ to $\X=\R^d$. The coefficient $\gamma$ can be also interpreted as a temperature parameter.  The measure $R^\gamma\in\M(\Omega)$ is a Markov measure with infinitesimal operator 
 $L^\gamma=\gamma\Delta$ (where $\Delta$ denotes the Laplacian), and initial reversible measure $dm=dx$, the Lebesgue measure on $\R^d$. 

In all the paper, we use the following notations. For any $t\in[0,1]$,   $X_t$ is  {\it the projection map} \[X_t: \omega\in \Omega\mapsto \omega_t \in \X.\]
Given $Q\in \M(\Omega)$, the measure $Q_t:=X_t\# Q$ on $\X$ denotes the push-forward of the  measure $Q$  by $X_t$, and  for any $0\leq t<s\leq 1$, the measure 
$Q_{s,t}:=(X_s,X_t)\# Q$ on $\X\times \X$ denotes  the push forward of the measure $Q$ by the projection map $(X_s,X_t)$. 
For any integrable function $F:\Omega \to \R$ with respect to $Q$, one notes
\[\E_Q[F]:=\int_\Omega F dQ.\] 
 
 The  result by T. Mikami \cite{Mik04} or C. L\'eonard \cite{Leo12} is the following: for any absolutely continuous measures  $\nu_0,\nu_1\in \Pc_2(\X)$,  for any sequences $(\gamma_\ell )_{\ell\in \N} $ of temperature parameters going to zero,
 \begin{align*}
 W_2^2(\nu_0,\nu_1)&= \inf_{Q\in \Pc(\Omega)} \left\{\E_Q[c]\,\Big| Q_0=\nu_0,Q_1=\nu_1\,\right\}\\
 &=\lim_{\gamma_\ell \to 0}\left[\gamma_\ell   \min_{Q\in \Pc(\Omega)}\Big\{  H(Q|R^{\gamma_\ell })\,\Big|\, Q_0=\nu_0,Q_1=\nu_1 \Big\}\right], 
 \end{align*}
where 
$c(\omega):=\int_0^1 |\!\stackrel{.}{\omega}_t\!|^2 dt,$
 if the path $\omega=(\omega_t)_{t\in[0,1]}$ is absolutely continuous ($\stackrel{.}{\omega}$ denotes its time derivative), and  $c(\omega):=+\infty$ otherwise.
The first equality is known as the Benamou-Brenier formula  (see \cite{BB00}).
The second equality therefore relates $W_2$ to the so-called  {\it dynamic Schr\"odinger minimization problems}. As a convex minimization problem, 
for any fixed $\gamma>0$, it admits a  single minimizer $\widehat Q^{\gamma}$, namely
  \begin{equation}\label{SP}
  \min_{Q\in \Pc(\Omega)}\Big\{ H(Q|R^{\gamma})\,\Big|\, Q_0=\nu_0,Q_1=\nu_1 \Big\}=H(\widehat Q^{\gamma}|R^{\gamma}).
  \end{equation}
As interpretation, the measure $\widehat Q^{\gamma}$ is the law of the process with configuration $\widehat Q^{\gamma}_0=\nu_0$ at time $t=0$ and $\widehat  Q^{\gamma}_1=\nu_1$ at time $t=1$, which is the closest in some entropic meaning, to a reversible Brownian motion with diffusion coefficient $\gamma$. 
  As a result (see \cite{Mik04,Leo12}), the sequence of minimizers $(\widehat Q^{\gamma_\ell })_{\ell\in \N}$ converges to a single measure $\widehat Q\in\Pc(\Omega)$. For any $t\in [0,1]$, let $Q_t^\gamma:=\widehat Q^\gamma_t$ and $\nu_t:= \widehat Q_t$. By definition, $(Q_t^\gamma)_{t\in [0,1]}$ is {\it a Schr\"odinger bridge from $\nu_0$ to $\nu_1$ at fixed temperature $\gamma$},  and  as a main result, as $\gamma_\ell $ goes to zero, the limit path $(\nu_t)_{t\in [0,1]}$, is  a $W_2$-geodesic from $\nu_0$ to $\nu_1$ (see \cite{Leo13}). Therefore, it is natural to consider a relaxation of the curvature definition \eqref{deplace} by replacing the geodesic $(\nu_t)_{t\in[0,1]} $ by the bridge $(Q_t^\gamma)_{t\in[0,1]} $ and by replacing  $W_2^2(\nu_0,\nu_1)$ by $\gamma H(\widehat Q^\gamma|R^\gamma)$. This idea has been explored in continuous setting   by G. Conforti in \cite{Con18}.
  
Let us  present  the discrete  analogue of this  approach due to C. L\'eonard \cite{Leo13, Leo17,Leo16}. From now on, the space $\X$ is  a countable set endowed with the $\sigma$-algebra generated by  singletons. The set $\Omega\subset \X^{[0,1]}$ denotes the space of all left-limited, right-continuous, piecewise constant paths $\omega= (\omega_t)_{t\in [0,1]}$ on $\X$, with finitely many jumps. The space $\Omega$ is endowed with the $\sigma$-algebra $\mathcal F$ generated by the cylindrical sets. In all the paper, by convention, a sum indexed by an empty set is equal to zero.

According to C. L\'eonard's paper \cite{Leo16}, the discrete space $\X$ is equipped with a metric distance $d$. 
This distance is assumed to be {\it positively lower bounded}: for all $x\neq y$ in $\X$, $d(x,y)\geq 1$.  
The space $\X$ is also  the set of vertices of a connected graph $G=(\X,E)$ where $E\subset \X\times \X$ denotes the set of directed edges of the graph. $G$ is supposed to be an undirected graph so that for all $(x,y)\in E$, one has $(y,x)\in E$. Two vertices $x$ and $y$ are {\it neighbours} and we write $x\sim y$  if $(x,y)\in E$. We assume that any vertex $x\in \X$ has a finite number of neighbours $d_x$ and that $\sup_{x\in \X} d_x=d_{\max}<\infty$.  We note $V(x)$ the set of neighbours of $x$. 
 The  length $\ell(\omega)$ of a  piecewise constant  path $\omega=(\omega_t)_{t\in [0,1]}\in \Omega$ is given by
\[\ell(\omega):= \sum_{0<t<1} d(\omega_{t^-},\omega_t).\]
In  C. L\'eonard's paper, the distance is  assumed to be {\it intrinsic in the discrete sense} (see \cite[Hypothesis 2.1]{Leo16}), this means that 
for  any $x,y\in \X$, 
\[d(x,y):=\inf\Big\{\ell(\omega)\,\big|\, \omega\in \Omega, \omega_0=x,\omega_1=y\Big\}.\]
In this paper, we  only consider the simple case where $d=d_\sim$ is  the graph distance for which the above assumptions are fulfilled: $d_\sim(x,y)=1$ if and only if $x\sim y$. 
  
  A {\it discrete path $\alpha$ of length $\ell\in \N$} joining two  vertices $x$ and $y$ is a sequence of $\ell+1$ neighbours $\alpha=(z_0,\ldots,z_\ell)$   so that $z_0=x$ and $z_\ell=y$. In the sequel, we note $z\in\alpha$ if there exists $i\in\{0,\ldots, \ell\}$ such that $z=z_i$, and we note $(z,z')\in \alpha$ if there exists $0\leq i<j\leq \ell$ such that $z=z_i$ and $z'=z_j$.  The  distance $d(x,y)$  is also the minimal length of a path joining $x$ and $y$.  A {\it discrete geodesic path} joining $x$ to  $y$ is a path of length $d(x,y)$ from $x$ to $y$.  We note $G(x,y)$  the set of all geodesic paths joining $x$ to  $y$, and we note $[x,y]$ the set of all points that belongs to a geodesic from $x$ to $y$,
\[[x,y]=[y,x]=\Big\{z\in \X\,\big|\, z\in \alpha,\alpha\in G(x,y)\Big\}.\]   

At fixed temperature $\gamma>0$, as reference measure on $\Omega$, we consider a Markov path measure $R^\gamma$  with generator $L^\gamma$ defined by  
\begin{eqnarray*}
\left\{  \begin{array}{ll}L^\gamma(x,y):= \gamma^{d(x,y)} L(x,y)\quad \mbox{ for }  x\neq y,
\\  L^\gamma(x,x):=- \sum_{y\in \X, y\neq x} L^\gamma(x,y), 
\end{array}\right.
\end{eqnarray*}
and initial {\it reversible invariante} measure  $R_0^\gamma=m$. More precisely, we assume that $m$ is reversible with respect to $L$, which means that  for any $x,y\in \X$
\[ m(x)L(x,y)=m(y) L(y,x). \]
It implies that $m$ is reversible with respect to $L^\gamma$ for any $\gamma>0$, and therefore $R_t^\gamma=m$ for all $t\in [0,1]$.
We also assume that the Markov process is irreducible so that $m(x)>0$ for all $x\in \X$.
Recall that from the definition of a generator,  for any $t\geq 0$ and any  $x,y\in \X$, one has
\[R^\gamma_{t, t+h}(x,y)=R^\gamma_t(x) (\delta_x(y)+L^\gamma(x,y)h +o(h)),\]
where $\delta_x$ is the Dirac measure at point $x$.
We note  $\P_t, t\geq 0,$  the Markov  semi-group associated to $L$,  and $\P_t^\gamma, t\geq 0,$ the Markov semi-group associated to $L^\gamma, \gamma>0$. By reversibility, one has  for any $x,y\in \X$
 \[R_{0,t}^\gamma(x,y)=m(x) \P_t^\gamma (x,y)=m(y)\P_t^\gamma(y,x),\]
 and since the process is irreducible, $P_t^\gamma(x,y)>0$ for all $t>0$ and all $x,y\in \X$.
For any  integrable function $f:\X\to\R$ with respect to $\P_t^\gamma(x,\cdot)$,
 we set 
 \[\P_t^\gamma f(x):=\sum_{y\in \X} f(y) \,\P_t^\gamma(x,y).\]
In this paper we only consider  generator $L$ satisfying :
 \begin{equation}\label{hypL}
 L(x,y)>0 \quad\mbox{ if and only if }\quad  x\sim y,
 \end{equation}
 so that $P_t^\gamma=P_{\gamma t}$ for all $\gamma,t>0$, but also for any $x\neq y$,
 \begin{eqnarray*}
d(x,y)=\min\left \{k\in \N\,\big| \, L^k(x,y)>0 \right\}.
\end{eqnarray*}

Let  $\nu_0,\nu_1\in \Pc(\X)$  with  respective densities $h_0$ and $h_1$ according  to $m$.
In L\'eonard's paper \cite{Leo16}, Theorem 2.1  ensures that under some assumptions (see \cite[Hypothesis 2.1]{Leo16}), at fixed temperature $\gamma>0$,
the minimum value of the dynamic Schr\"odinger problem \eqref{SP} is reached for a single  probability measure $\widehat Q^\gamma$  which is Markov.  This Markov property implies that the measure $\widehat Q^\gamma$ has density $f^\gamma(X_0)g^\gamma(X_1)$ with respect to $R^\gamma$, where $f^\gamma$ and $g^\gamma$ are non-negative  functions
 on 
$\X$ satisfying the following so-called {\it Schr\"odinger system}
\begin{eqnarray} \label{SS}
 \left\{\begin{array}{ll} f^\gamma(x)\,\P_1^\gamma g^\gamma(x)&= h_0(x),
 \vspace{0,1 cm}\\
g^\gamma(y)\,\P_1^\gamma f^\gamma(y)&= h_1(y),\end{array}\right.\qquad \forall x,y\in\X.
\end{eqnarray}
Since $f^\gamma$ is non-negative and $f^\gamma\neq 0$,  by irreducibility   one has $\P_t^\gamma f^\gamma>0$ for all $t>0 $, and  for the same reason, $\P_t^\gamma g^\gamma>0$ for all $t>0 $.
As a consequence, if $\nu_0$ and $\nu_1$ have finite support, then the Schr\"odinger system \eqref{SS}  
implies that  $f^\gamma$ and $g^\gamma$ have also finite support.

According to \cite[Theorem 6.1.4.]{Leo17}, from the Markov property,  the law at time $t$ of the Schr\"odinger bridge at fixed temperature $\gamma$, $\widehat Q^\gamma_t$, is given by: for any $z\in\X$,
\begin{equation}\label{Qtstructure}
\widehat Q_t^\gamma(z)= \P^\gamma_t f^\gamma(z) \P^\gamma_{1-t}g^\gamma(z) m(z)=\sum_{x,y\in \X} m(z)\P_t^\gamma(z,x) \P_{1-t}^\gamma(z,y)f^\gamma(x) g^\gamma(y).
\end{equation}
Let us present  another expression for $\widehat Q_t^\gamma$. First, by reversibility, one has 
\[\sum_{z\in \X}  m(z)\P^\gamma_t(z,x) \P^\gamma_{1-t}(z,y)
= m(x) \P^\gamma_1(x,y)=R^\gamma_{0,1}(x,y).\]
Therefore,  setting 
\begin{equation}\label{definut}
{Q_t^\gamma}^{ x,y}(z):= \frac{m(z)\P^\gamma_t(z,x) \P^\gamma_{1-t}(z,y)}{m(x)\P_1^\gamma(x,y)}= \frac{\P^\gamma_t(x,z) \P^\gamma_{1-t}(z,y)}{\P^\gamma_1(x,y)}= \frac{\P^\gamma_{1-t}(y,z) \P^\gamma_{t}(z,x)}{\P^\gamma_1(y,x)},
\end{equation}
and 
\[\widehat\pi^\gamma(x,y):=\widehat Q^\gamma_{0,1}(x,y)=   R^\gamma_{0,1}(x,y) f^\gamma(x) g^\gamma(y),\]
we get for any $z\in\X$,
\begin{equation}\label{expressQ_tgamma}
\widehat Q_t^\gamma(z)= \iint {Q_t^\gamma}^{x,y}(z)\,\, d\widehat \pi^\gamma(x,y).
\end{equation}
Actually, for any $x,y\in \X$,  $({Q_t^\gamma}^{x,y})_{t\in[0,1]}$ is the Schr\"odinger bridge joining the Dirac measures $\delta_x$ and $\delta_y$. The path  $(\widehat Q^\gamma_t)_{[0,1]}$ is therefore a mixing of these Schr\"odinger bridges, according to the coupling measure $\widehat \pi^\gamma\in \Pi(\nu_0,\nu_1)$.
 
  Using the Schr\"odinger system \eqref{SS},  the measure $\widehat \pi^\gamma$ can be rewritten as follows,
 \[\widehat \pi^\gamma(x,y)= \nu_0(x) \,\frac{g^\gamma(y) \P_1^\gamma(x,y)}{\P_1^\gamma g^\gamma(x)}= \nu_1(y) \, \frac{f^\gamma(x) \P_1^\gamma(y,x)}{\P_1^\gamma f^\gamma(y)}.\]
 For any $\nu\in\Pc(\X)$, let $\supp(\nu)$ denote the support of the measure $\nu$, $\supp(\nu):=\{x\in \X\,|\, \nu(x)>0\}$.
The measure  $\widehat \pi^\gamma$ admits the  following decomposition, 
 \[ \widehat\pi^\gamma(x,y)=\nu_0(x)\,\widehat{\pi}^\gamma_{_\rightarrow}(y|x)= \nu_1(y)\,\widehat{\pi}^\gamma_{_\leftarrow}(x|y),\]
 where $\widehat{\pi}^\gamma_{_\rightarrow}$ and $\widehat{\pi}^\gamma_{_\leftarrow}$ are the Markov kernel defined by,  for any $x\in \supp(\nu_0)$, 
 \begin{eqnarray*}
 \widehat{\pi}^\gamma_{_\rightarrow}(y|x):=\frac{g^\gamma(y) \P_1^\gamma(x,y)}{\P_1^\gamma g(x)},
 \end{eqnarray*}
 and for any   $y\in \supp(\nu_1)$,  
 \begin{eqnarray}\label{defnoyaubis}
 \widehat{\pi}^\gamma_{_\leftarrow}(x|y):=\frac{f^\gamma(x) \P_1^\gamma(y,x)}{\P_1^\gamma f^\gamma(y)}.
 \end{eqnarray}
 
In order to fulfill this presentation, recall that the {\it static Schr\"odinger minimization problem} associated to $R_{0,1}^\gamma$ is to  find the minimum value of 
$H(\pi |R^\gamma_{0,1})$ over all  $\pi\in \Pi(\nu_0,\nu_1)$.
Theorem 2.1. by C. L\'eonard \cite{Leo16} ensures that under Hypothesis 2.1 of its  paper,  this minimum value is the same as the one of the dynamic Schr\"odinger minimization problem. Moreover it  is reached for $\widehat \pi^\gamma=\widehat Q^\gamma_{0,1}\in \Pc(\X\times\X)$  and therefore
\[
\inf_{\pi\in\Pi(\nu_0,\nu_1)}H(\pi |R^\gamma_{0,1})=H(\widehat \pi^\gamma|R^\gamma_{0,1})
=H(\widehat Q^\gamma|R^\gamma).\]

The main goal of this paper is to prove a convexity property 
for the function $t\in[0,1]\mapsto H(\widehat Q_t|m)$ by applying the  {\it slowing down procedure}. Our  strategy  is first to differentiate twice  at positive temperature $\gamma>0$ the function  $t\in[0,1]\mapsto H(\widehat Q^\gamma_t|m)$
using backward equations for the Markov process. Then as a main contribution of this paper, we analyse the behavior of the second derivative of this functions as the temperature $\gamma$ goes to zero (see Theorem 3.5). Considering different examples of graphs, any lower bound of this limit second derivative gives a convexity property of type \eqref{deplacebis}.

We want this strategy to hold for a large class of graphs $(\X, d,m, L)$, with  possibly  infinite set of vertices $\X$. Mainly in order to justify the lower bounds  on the second derivative as $\gamma$ goes to zero, we make the following assumptions.  
\begin{itemize}
\item  The measure $m$ is bounded,
\begin{eqnarray}\label{unifbounded0}
\sup_{x\in \X} m(x)<\infty, \qquad \mbox{and} \qquad \inf_{x\in \X} m(x)>0.
\end{eqnarray}
\item  The generator $L$ is uniformly bounded : there exists $S\geq 1$ such that 
\begin{eqnarray}\label{unifbounded1}
\sup_{x\in \X} |L(x,x)|\leq S, 
\end{eqnarray}
and there exists $I\in (0,1]$ such that
\begin{eqnarray}\label{unifbounded2}
\inf_{x,y\in \X, x\sim y} L(x,y)\geq I.
\end{eqnarray}
\item 
For any $x\in \X$, there exists 
$\gamma_o\in(0,1]$ such that 
\begin{eqnarray}\label{convhyp}
\sum_{y\in \X} \gamma_o^{d(x,y)}<\infty.
\end{eqnarray}
\end{itemize}

All these assumptions are obviously satisfy if $\X$ is finite. One may also consider any infinite graph $\X$ with bounded degree $d_{\rm max}$ endowed with the counting measure $m_0$, which is reversible with respect to the generator $L_0$ given by $L_0(x,y)=1$ for $x\sim y$, $L_0(x,x)=-d_x$.
On such graphs $(\X,d,m_0,L_0)$,  a condition dealing with the geometry of balls of radius 2 will be given in a forthcoming paper to get lower bounds on the  $T_2$-entropic curvature.
 
Unfortunately, the above assumptions are not fulfilled by example for  the $M/M/\infty$ process on $\N$ with Poisson stationary measure. For such processes,  the same strategy is expected to provide lower bounds on entropic curvature adapting proofs by the known specific expression of the Markov semi-group. A next challenge is  to weak the assumptions of this paper for other specific classes of processes. 

One of the main assets of Hypothesis  \eqref{unifbounded1} is to provide a simple expression for  the semi-group  $(\P^\gamma_t)_{t\geq 0}$, namely 
\begin{eqnarray}\label{formePt}
\P^\gamma_t:=e^{t\gamma L}= \sum_{k\in\N} \frac{(t\gamma)^k}{k!} L^k.
\end{eqnarray}
From this  expression, on may simply derive a rather expression of Schr\"odinger bridges at zero temperature between Dirac measures. 
Namely, given $x,y\in \X$, as condition \eqref{unifbounded1} holds, Lemma \ref{lemmetech} \ref{item4} gives  the limit of the path $({Q_t^\gamma}^{x,y})_{t\in [0,1]}$  defined by    \eqref{definut}, namely   for any $z\in \X$,
\begin{equation}\label{defnut0}
\lim_{\gamma\to 0} {Q_t^\gamma}^{x,y}(z)=  {Q_t}\!^{x,y}(z):=\1_{[x,y]}(z)\, r(x,z,z,y)\,\B_t^{d(x,y)}(d(x,z)),
\end{equation}
where for any $x,z,v,y\in\X$,
\begin{equation}\label{defr}
 r(x,z,v,y)= \frac{L^{d(x,z)}(x,z) L^{d(v,y)}(v,y)}{L^{d(x,y)}(x,y)},
 \end{equation}
and $\B_t^d$ denotes the binomial law with parameters $t\in [0,1]$ and $d\in \N$ :
\[ \B_t^d(k):= \binom{d}{k}\, t^k(1-t)^{d-k},\quad k\in\{0,\ldots,d\},\]
 with the binomial coefficient $\binom{d}{k}:=\frac{d!}{k!(d-k)!}$.
Obviously one has $\displaystyle {Q_0}\!^{x,y}=\delta_x$ and $\displaystyle {Q_1}\!^{ x,y}=\delta_y$ . Moreover, observe that  for any $t\in(0,1)$, the support of ${Q_t}\!^{ x,y}$ is $[x,y]$, the set of points  on  discrete geodesics from $x$ to $y$. Observe that this limit Schr\"odinger bridge $({Q_t}\!^{ x,y})_{t\in [0,1]}$ is  consistent with  the metric graph structure. This is not surprising. As the temperature $\gamma$ decreases to zero,  the jumps of the Markov process are less frequent, and the reference  process   is therefore  a {\it  lazy random walk } according to C. L\'eonard's terminology. Roughly speaking, ${Q_t}\!^{ x,y}$ can be interpreted as  the law of a  process which is forced to go from $x$ at time 0 to $y$ at time 1 and  that does not want to move or to jump too much between time 0 and   1. Therefore this process  follows the geodesics of the graph from $x$ to $y$.  

For a better understanding, the law ${Q_t}\!^{ x,y}$  on $[x,y]$ can be described as follows. Let $N_t$ denote a binomial random variable with parameters $t\in[0,1]$ and $d=d(x,y)\in \N$, and let $\Gamma$ be a random discrete geodesic in $G(x,y)$ whose law is given by 
\[\Pb(\Gamma=\alpha)=\frac{L(\alpha_0,\alpha_1)\cdots L(\alpha_{d-1},\alpha_d)}{L^{d(x,y)}(x,y)}, \qquad \mbox{for all } \alpha=(\alpha_0,\alpha_1, \ldots,\alpha_d)\in G(x,y). \]
  If $N_t$ and $\Gamma=(\Gamma_0, \ldots,\Gamma_d)$ are independent then $ {Q_t}\!^{ x,y}$ is the law of $\Gamma_{N_t}$.
  
    Let us come back to the behavior of the Schr\"odinger bridges $(\widehat Q^\gamma_t)_{t\in[0,1]}$ as $\gamma$ goes to zero. 
Assume  $\nu_0$ and $\nu_1$ have finite support. C. L\'eonard \cite[Theorem 2.1]{Leo16} proves  that given a positive sequence $(\gamma_\ell )_{\ell\in \N}$  with $\lim_{\ell \to \infty } \gamma_\ell  =0$, the sequence of optimal Schr\"odinger minimizers $(\widehat Q^{\gamma_\ell })_{\ell\gamma_\ell \in \N}$  converges to a single probability measure $\widehat Q\in {\mathcal P}(\Omega)$  for the narrow convergence, provided Hypothesis 2.1 holds. In this paper, the measure $\widehat Q$ is named as {\it the limit  Schr\"odinger problem optimizer at zero temperature, between $\nu_0$ and $\nu_1$}.
     In the framework  of this work, choosing two  probability measures  $\nu_0$ and $\nu_1$ with finite  supports, Hypothesis 2.1 in \cite{Leo16} is reduced to the following assumption (see condition $(\mu)$ in  Hypothesis 2.1): for any $x,y \in \X$  and for any  $\gamma>0$  
 \begin{equation*}
 \E_{R^\gamma}\left[\ell \,|\,X_0=x,X_1=y\right]<\infty .
 \end{equation*}
 According to Lemma \ref{lemmetech} (vi),  this assumption is fulfilled thanks to \eqref{unifbounded1}  since $\P_1^\gamma(x,y)>0$ for any $x,y\in \X$ and $\gamma>0$.

  As a main result of \cite[Theorem 2.1]{Leo16}, the measure $\widehat Q$ is also a solution of the following {\it dynamic Monge-Kantorovich problem} :
  \[ \inf\Big\{\E_{ Q}[\ell]\,\big|\, Q\in {\mathcal P}(\Omega), Q_0=\mu_0, Q_1=\mu_1\Big\}=\E_{\widehat Q}[\ell].\]
 The sequence of coupling measures $(\widehat\pi^{\gamma_\ell })_{\ell\in\N}$ also weakly converges to \begin{equation*}
 \widehat\pi:= \widehat Q_{0,1},
 \end{equation*}
 and similarly to the continuous case,   $\widehat \pi$ is a {\it $W_1$-optimal coupling} of $\nu_0$ and $\nu_1$.
  
 The weak convergence of  $(\widehat Q^{\gamma_\ell })_{\ell\in\N}$ to $\widehat Q$ also provides the convergence of $(\widehat Q^{\gamma_\ell }_t)_{\ell\in \N}$ to $\widehat Q_t$, and \eqref{expressQ_tgamma} implies \eqref{limitbridge}. 
According to its construction, this bridge is called {\it Schr\"odinger bridge at zero temperature} from $\nu_0$ to $\nu_1$. Observe that for any $t\in(0,1)$, the support of $\widehat Q_t$  only depends on the support of the optimal coupling $\widehat \pi$ of $\nu_0$ and $\nu_1$,
 \begin{equation}\label{suppQt}
 \supp(\widehat Q_t)=\bigcup_{(x,y)\in \supp(\widehat  \pi)} [x,y].
 \end{equation}
 As a main result,  C. Leonard proves that with hypothesis \eqref{hypL}, the path $(\widehat Q_t)_{t\in[0,1]}$ is a constant speed $W_1$-geodesic  (see \cite[Theorem 3.15]{Leo16}): for any $0\leq s\leq t\leq 1$,
 \[W_1\big(\widehat Q_t,\widehat Q_s\big)=(t-s) W_1(\nu_0,\nu_1).\]
 Actually, from the above interpretation of the measure ${Q_t}\!^{ x,y}$ as the law of $\Gamma_{N_t}$ where $\Gamma$ is a random geodesic from $x$ to $y$, independent of a binomial random  variable  $N_t$ with parameters $t\in[0,1]$ and $d(x,y)$, one proves that any bridge $(\widehat Q_t)_{t\in[0,1]}$ defined by \eqref{limitbridge} is a $W_1$-geodesic, as soon as $\widehat  \pi$ is a $W_1$-optimal coupling of $\nu_0$ and $\nu_1$. The proof of this result is the same as  the one of  \cite[Proposition 2.2]{GRST14}.

\section{Main results : examples of entropic curvature bounds along Schr\"odinger bridges on graphs}
The main purpose of  this section is to present    $W_1$ or $T_2 $-entropic curvature  bounds  for several  discrete graph spaces $(\X, d,m,L)$ in the framework of the first  section. As explained  before, these bounds  follows from $C$-displacement convexity  properties \eqref{deplacebis} of the relative entropy along  {\it Schr\"odinger bridges at zero temperature} $(\widehat Q_t)_{t\in[0,1]}$,  derived from the slowing down procedure. 

As in the paper \cite{GRST14},  $C$-displacement convexity  properties  imply a wide range of functional inequalities for the measure $m$ on $\X$, such as   Pr\'ekopa-Leindler type of inequalities, transport-entropy inequalities, and also discrete Poincar\'e or  modified log-Sobolev inequalities. 

As mentioned before, our  approach is efficient to reach new transport-entropy inequalities, transport cost well suited to get new concentration properties, using known connections between transport-entropy inequalities and  concentration properties  pushed forward in \cite{GRST17}.  Although  Erbar-Maas method  does not allow  to recover such concentration results  on graphs, both approaches  imply bounds on the so-called subgaussian constant $\sigma^2(\X)$ of the graph (see \cite{BHT06}), namely $\sigma^2(\X)\leq 1/K$ if the $W_1$-entropic curvature is bounded from below by $K>0$.     
 
As a guideline for other graphs, connexions between $C$-displacement convexity  properties along Schr\"odinger bridges at zero temperature and modified log-Sobolev inequalities are explained only  in the case of the discrete hypercube or the Bernoulli-Laplace Model (see comments $(d)$ after Theorem \ref{thmcube} and after Theorem \ref{thmslicecube}). 
Even if this global  strategy does not allow to recover exactly some known modified log-Sobolev inequality for the Bernoulli-Laplace model, preliminary computations look  promising to apply it  for measures on graphs with interaction potentials. A challenge is to improve it for that purpose. 

New Pr\'ekopa-Leindler type of inequalities are also a straightforward dual consequence of  the $C$-displacement convexity properties \eqref{deplacebis}. Here is
a general statement that applies for each of the discrete spaces $(\X,d,m,L)$ studied in this paper and presented next.

\begin{theorem}\label{prek} On a discrete space  $(\X,d,m,L)$, 
assume that the relative entropy  satisfies a $C$-displacement convexity property (see Definition \ref{defcourb}) with  $C=(C_t)_{t\in (0,1)}$ given by : for any $\nu_0,\nu_1\in {\Pc_b}(\X)$ 
 \[C_t(\widehat \pi) = \iint c_t(x,y) \,d\widehat \pi(x,y),\]
where $ \widehat\pi=\widehat Q_{01}$, and  $\widehat Q$ is the limit  Schr\"odinger problem optimizer between $\nu_0$ and $\nu_1$. Then, the next property holds for all $t\in(0,1)$. 
If  $f,g,h$ are functions on $\X$ satisfying 
\[(1-t) f(x)+t g(y)\leq \int h \,d{Q_t}\!^{ x,y} +\frac{t(1-t)}2 \,c_t(x,y),\qquad \forall x,y\in \X,\]
then 
\[\left( \int e^f\, dm\right)^{1-t}\left( \int e^g\, dm\right)^{t}\leq \int e^h\, dm.\]
\end{theorem}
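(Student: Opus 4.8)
The plan is to read Theorem~\ref{prek} as the convex‑duality counterpart of the displacement convexity bound \eqref{deplacebis}, the crucial feature being that the cost is assumed of the averaged form $C_t(\nu_0,\nu_1)=\iint c_t\,d\widehat\pi^0$: this is exactly what will force the curvature term to cancel. Since there is nothing to prove when $\int e^h\,dm=+\infty$, I would assume $\int e^h\,dm<\infty$ and reduce to finitely supported marginals by truncation. Precisely, it suffices to show that for every nonempty finite set $A\subseteq\X$,
\[
\log\int e^h\,dm\ \ge\ (1-t)\,\log\Big(\int_A e^f\,dm\Big)\ +\ t\,\log\Big(\int_A e^g\,dm\Big),
\]
because letting $A\uparrow\X$ and applying monotone convergence then yields the claimed inequality (with the usual conventions when $\int e^f\,dm$ or $\int e^g\,dm$ is infinite).

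Fix such an $A$. Since $m>0$ everywhere, the two integrals $\int_A e^f\,dm$ and $\int_A e^g\,dm$ are finite and positive, so I would introduce the probability measures $\nu_0,\nu_1\in\Pc_b(\X)$ with densities $\1_A\,e^f/\!\int_A e^f\,dm$ and $\1_A\,e^g/\!\int_A e^g\,dm$ with respect to $m$. A one‑line computation of their relative entropies gives the Gibbs identities
\[
\int f\,d\nu_0-H(\nu_0|m)=\log\int_A e^f\,dm,\qquad \int g\,d\nu_1-H(\nu_1|m)=\log\int_A e^g\,dm.
\]
Let $(\widehat Q^0_t)_{t\in[0,1]}$ be the Schr\"odinger bridge at zero temperature from $\nu_0$ to $\nu_1$ and $\widehat\pi^0=\widehat Q^0_{0,1}\in\Pi(\nu_0,\nu_1)$. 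As $\nu_0,\nu_1$ are finitely supported, so are $\widehat\pi^0$ and, by \eqref{suppQt}, each $\widehat Q^0_t$; hence all integrals below are finite sums, no integrability issue arises, and in particular $H(\widehat Q^0_t|m)<\infty$.

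The heart of the argument is to integrate the hypothesis $(1-t)f(x)+t\,g(y)\le \int h\,d{\nu_t^0}^{x,y}+\frac{t(1-t)}{2}c_t(x,y)$ against $\widehat\pi^0(dx,dy)$: the marginals of $\widehat\pi^0$ turn the left side into $(1-t)\int f\,d\nu_0+t\int g\,d\nu_1$, the mixture representation \eqref{limitbridge} (namely $\widehat Q^0_t=\iint {\nu_t^0}^{x,y}\,d\widehat\pi^0$) turns $\int h\,d{\nu_t^0}^{x,y}$ into $\int h\,d\widehat Q^0_t$, and the definition of $C_t$ turns the last term into $\frac{t(1-t)}{2}C_t(\nu_0,\nu_1)$, giving
\[
(1-t)\int f\,d\nu_0+t\int g\,d\nu_1\ \le\ \int h\,d\widehat Q^0_t+\frac{t(1-t)}{2}C_t(\nu_0,\nu_1).
\]
I would then add this to \eqref{deplacebis}: the two $C_t(\nu_0,\nu_1)$ terms cancel, leaving $(1-t)\big(\int f\,d\nu_0-H(\nu_0|m)\big)+t\big(\int g\,d\nu_1-H(\nu_1|m)\big)\le \int h\,d\widehat Q^0_t-H(\widehat Q^0_t|m)$. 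Finally, the Gibbs variational principle in the form $\int h\,d\widehat Q^0_t-H(\widehat Q^0_t|m)\le\log\int e^h\,dm$ — which, as $\int e^h\,dm<\infty$, is just the nonnegativity of $H\big(\widehat Q^0_t\,\big|\,e^h m/\!\int e^h\,dm\big)$ — together with the two Gibbs identities yields exactly the displayed estimate for $A$, and the truncation limit completes the proof.

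The only delicate points I anticipate are bookkeeping ones: verifying that $\widehat Q^0_t$ genuinely has finite support, so that $H(\widehat Q^0_t|m)<\infty$ and the Gibbs inequality applies with all quantities finite, and justifying the limit $A\uparrow\X$. There is no conceptual obstacle beyond the cancellation of the curvature term, which is dictated precisely by the standing hypothesis that $C_t$ is the $\widehat\pi^0$‑average of the pointwise costs $c_t$.
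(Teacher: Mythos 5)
Your proof is correct and follows essentially the same route the paper intends, namely the dual argument adapted from Theorem 6.3 of \cite{GRST14}: choose $\nu_0,\nu_1$ with densities proportional to $e^f,e^g$, integrate the pointwise hypothesis against $\widehat\pi^0$, add the displacement convexity bound so the cost terms cancel, and conclude with the Gibbs variational identity $\int h\,d\widehat Q^0_t-H(\widehat Q^0_t|m)\leq\log\int e^h\,dm$. The truncation to a finite set $A$ (to stay within $\Pc_b(\X)$, where \eqref{deplacebis} is stated) and the monotone-convergence limit $A\uparrow\X$ are exactly the right bookkeeping and raise no issues.
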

The proof of this result is an easy adaptation of the one of Theorem 6.3 in \cite{GRST14}. It is left to the reader. 

Following  the paper \cite[section 3.2]{GRST17}, a  tensorization property holds for the $C$-displacement property by using  Knothe-Rosenblatt couplings. Let $(\X_i,d_i,m_i,L_i)$, $i\in[n]:=\{1,\ldots,n\}$, be $n$  graphs satisfying the assumptions of the paper \eqref{unifbounded0}-\eqref{convhyp}.  Let $(\X,d,m,L)$ be the product graph space defined by
$\X:=\X_1\times\cdots\times \X_n$, $m:=m_i\otimes\cdots\otimes m_n$, and  for all $x=(x_1,\ldots, x_n)\in \X$, $y=(y_1,\ldots, y_n)\in \X$,
\[d(x,y):=\sum_{i=1}^n d_i(x_i,y_i).\]
If each measure $m_i$ is reversible with respect to $L_i$, then the product measure $m$ is reversible with respect to the generator \[L:=L_1\oplus\cdots\oplus L_n.\] Namely $L$ is defined by  $L(x,y)=0$ if $d(x,y)\geq 2$, $L(x,x)=-\sum_{y\in\X, y\neq x} L(x,y)$, and for $d(x,y)=1$,  if   $i\in [n]$ is the index for which $d_i(x_i,y_i)=1$ (and $x_j=y_j$ for all $j\neq i$), then
\[ L(x,y)=L_i(x_i,y_i).\]
The Markov semi-group $(P_t)_{t\geq 0}$ associated to $L$ has a product structure, for any $x,y\in \X$, for any $t\geq 0$,
\[P_t(x,y)=P_{1,t}(x_1,y_1)\cdots P_{n,t}(x_n,y_n),\]
where $(P_{i,t})_{t\geq 0}$ denotes the semi-group associated to the generator $L_i$ on $\X_i$, $i\in\{1,\ldots,n\}$.
By construction, it follows that the Schr\"odinger bridge at zero temperature between the Dirac measures $\delta_x$ and $\delta_y$ is a  product of Schr\"odinger bridges at zero temperature between the Dirac measures $\delta_{x_i}$ and $\delta_{y_i}$ on $\X_i$, namely for any $z=(z_1,\ldots,z_n)\in \X$
\begin{equation}\label{prodschro}
{Q_t}\!^{ x,y}(z)={Q_t}\!^{x_1,y_1}(z_1)\cdots{Q_t}\!^{x_n,y_n}(z_n).
\end{equation}
This can be also derived from the geometric structure of the graph.  
 Since any discrete geodesic from $x$ to $y$ is made of $d_i(x_i,y_i)$ jumps for the $i$'s coordinates picked from a  discrete geodesic from $x_i$ to $y_i$ on $\X_i$, one has for $x\neq y$, 
 \[ L^{d(x,y)}(x,y)=\binom{d(x,y)}{d_1(x_1,y_1),\ldots,d_n(x_n,y_n)}\, L_1^{d(x_1,y_1)}(x_1,y_1)\cdots L_n^{d(x_n,y_n)}(x_n,y_n),\] 
 where for any integers $d, k_1,\ldots ,k_n$ such that $d=k_1+\cdot +k_n$, $\binom{d }{k_1,\ldots,k_n}:=\frac{d!}{k_1!\cdots k_n!}$ is the multinomial coefficient. 
The identity \eqref{prodschro} then easily follows.

Using the notations of the paper \cite{GRST17}, any measures $\nu_0,\nu_1 \in \Pc(\X)$
admit the following disintegration formulas: for all $x = (x_1, . . . , x_n ), y = (y_1, . . . , y_n ) \in \X$,
\[\nu_0(x) = \nu_0^1(x_1) \,\nu_0^2(x_2|x_1) \,\nu_0^3(x_3|x_1,x_2)\cdots  \nu_0^n(x_n|x_1,...,x_{n-1}), \]
\[\nu_1(y) = \nu_1^1(y_1) \,\nu_1^2(y_2|y_1)\, \nu_1^3(y_3|y_1,y_2)\cdots  \nu_0^n(y_n|y_1,...,y_{n-1}),\]
with $\nu_0^1, \nu_1^1\in\Pc(\X_1)$ and for any $i\in\{2,\ldots,n\}$, $\nu_0^i(\,\cdot\,|x_1,...,x_{i-1}),\nu_1^i(\,\cdot\,|y_1,...,y_{i-1})\in \Pc(\X_i)$.
For $i\in[n]$, let  $\pi^i(\,\cdot\,|x_1,...,x_{i-1},y_1,...,y_{i-1})\in\Pc(\X_i^2)$ be 
 a coupling of $\nu_0^i(\,\cdot\,|x_1,...,x_{i-1})$ and $\nu_1^i(\,\cdot\,|y_1,...,y_{i-1})$. Then, the Knothe-Rosenblatt coupling $\pi^{(n)}$ of $\nu_0$ and $\nu_1$ associated to the collection of couplings $\pi_i$'s is defined by 
 \[ \pi^{(n)} ( x , y ) := \pi_ 1 ( x_1 , y_1 ) \,\pi_ 2 ( x_2 , y_2 | x_1 , y_1 ) \cdots \pi_n ( x_n , y_n | x_1 , . . . , x_{n-1} , y_1 , . . . , y_{n_1} ).\]
 One notices  $(Q^{(n)}_t)_{t\in[0,1]}$  the bridge in $\Pc(\X)$ from $Q^{(n)}_0=\nu_0$ to $Q^{(n)}_1=\nu_1$, associated to the coupling $\pi^{(n)}$, defined by
\[Q^{(n)}_t(z)=\iint {Q_t}\!^{ x,y}(z) \,d \pi^{(n)}(x,y),\qquad t\in[0,1].\]  
 
 \begin{theorem}\label{tenso}   Let $(\X_i,d_i,m_i,L_i)$, $i\in[n]$, be a collection of graph spaces. Assume that each space  $(\X_i,d_i,m_i,L_i)$ satisfies a $C_i$-displacement convexity property  with  $C_i=(C_{i,t})_{t\in (0,1)}$. Let 
 $(\X,d,m,L)$ be the product space defined as above. Given $\nu_0,\nu_1\in\Pc_b(\X)$ with their disintegration formulas mentioned above, let $ \pi^{(n)}$ be the Knothe-Rosenblatt coupling  of $\nu_0$ and $\nu_1$, associated the collection of couplings $\widehat \pi_i$'s constructed as follows: $\widehat \pi_1:=\widehat Q^1_{0,1}$ is the projection at time 0 and 1 of $\widehat Q^1$, the limit Schr\"odinger problem optimizer at zero temperature between $\nu_0^1$ and $\nu_1^1$, and for $i\in\{2,\ldots,n\}$ and 
$x_1,...,x_{i-1}, y_1,...,y_{i-1}\in \X$,
\[\widehat \pi_i(\,\cdot\,|x_1,...,x_{i-1}, y_1,...,y_{i-1})=\widehat Q^i_{0,1}(\,\cdot\,|x_1,...,x_{i-1}, y_1,...,y_{i-1})\]
is the projection at time 0 and 1 of $\widehat Q^i(\,\cdot\,|x_1,...,x_{i-1}, y_1,...,y_{i-1})$, the limit Schr\"odinger problem optimizer at zero temperature between  $\nu_0^i(\,\cdot\,|x_1,...,x_{i-1})$ and $\nu_1^i(\,\cdot\,|y_1,...,y_{i-1})$. Then, the product space  $(\X,d,m,L)$ satisfies the following convexity property, for any $\nu_0,\nu_1\in\Pc_b(\X^2)$ and  any $t\in(0,1)$, 
 \begin{eqnarray}\label{deplacebisbis}
H( Q^{(n)}_t|m)\leq (1-t) H(\nu_0|m)+t \,H(\nu_1|m)- \frac{t(1-t)}2C_t(\pi^{(n)}),
\end{eqnarray}
where $(Q^{(n)}_t)_{t\in[0,1]}$ is  the bridge from $\nu_0$ to $\nu_1$ associated to the coupling $\pi^{(n)}$ and 
\[C_t(\pi^{(n)}):= \sum_{i=1}^n \iint C_{i,t}\left(\widehat \pi_i(\,\cdot\,|x_1,...,x_{i-1}, y_1,...,y_{i-1})\right) \, d\pi^{(n)}(x,y).\]
\end{theorem}
The proof of this result is a simple adjustment   of the proof of Theorem 1.1 in \cite{GRST14}, which is left to the reader. 
\begin{remarks}
\begin{itemize}
\item Even if  the $\widehat \pi_i$'s are $W_1$-optimal couplings in $\Pc(\X_i^2)$, there is no reason for $\pi^{(n)}$ to be a $W_1$-optimal coupling of $\nu_0$ and  $\nu_1$ in $\Pc(\X^2)$, and therefore for $(Q^{(n)}_t)_{t\in[0,1]}$ to be a   $W_1$-geodesic.  Therefore, the convexity property \eqref{deplacebisbis} on the product space $(\X,d,m,L)$ slightly differs from the convexity property given by Definition \ref{defcourb}. 
\item One will see on the discrete hypercube $\X=\{0,1\}^n$, that working  directly on the product space provides  convexity  properties that can not be derived from the tensorization property of Theorem \ref{tenso}.
\end{itemize}
\end{remarks}

 Let us now present results for specific discrete spaces $(\X,d,m,L)$. For each of these  spaces, we  describe the Schr\"odinger path at zero temperature and, as a main result, we give a $C$-displacement convexity property \eqref{deplacebis} satisfied by the reversible  measure $m$ by specifying the family of costs  $C=(C_t)_{t\in(0,1)}$. 
The  strategy of  proof of  these  results   is explained  in section \ref{proof}. %: 

\subsection{The lattice $\Z^n$ endowed with the counting measure}
Let $m$ denote the counting measure on $\X=\Z^n$. The  graph structure on $\Z^n$ is  given by the set of edges
\[E:=\Big\{(z,z+e_i),( z,z-e_i)\,\big|\, z\in \Z^n, i\in [n]\Big\},\]
where $(e_1,\ldots,e_n)$ is the canonical base of $\R^n$. The graph distance is  given by 
\[d(x,y):=\sum_{i=1}^n |y_i-x_i|,\qquad x,y\in \Z^n.\]
The  measure $m$ is reversible with respect to the generator $L$ defined by, 
for any $z\in \Z^n$, for any $i\in[n]$, 
\begin{eqnarray*}
L(z,z+e_i)=L( z,z-e_i)=1,\qquad  L(z,z)=-2n.
\end{eqnarray*}
For any integers $d, k_1,\ldots ,k_n$ such that $d=k_1+\cdot +k_n$, $\binom{d }{k_1,\ldots,k_n}=\frac{d!}{k_1!\cdots k_n!}$ denotes the multinomial coefficient.
Since 
\[
L^{d(x,y)}(x,y)= \# G(x,y)= \binom{d(x,y)}{|y_1-x_1|,\ldots,|y_n-x_n|},
\]
 the Schr\"odinger bridge at zero temperature $(\widehat Q_t)_{t\in[0,1]}$ joining two measures $\nu_0,\nu_1\in \Pc_b(\X)$ is given by \eqref{limitbridge} 
 with, according to \eqref{defnut0},
  \begin{align*}
  {Q_t}\!^{ x,y}(z)&= \1_{[x,y]}(z) \,\frac{\binom{d(x,z)}{|z_1-x_1|,\ldots,|z_n-x_n|}\binom{d(z,y)}{|y_1-z_1|,\ldots,|y_n-z_n|}}{ \binom{d(x,y)}{|y_1-x_1|,\ldots,|y_n-x_n|} } \B_t^{d(x,y)}(d(x,z)) \\
  &= \1_{[x,y]}(z)\,\binom{|y_1-x_1|}{|z_1-x_1|} \cdots \binom{|y_n-x_n|}{|z_n-x_n|}\;t^{d(x,z)} (1-t)^{d(z,y)} ,\qquad z\in \Z^n.
  \end{align*}
  Observe that $({Q_t}\!^{ x,y})_{t\in[0,1]}$ is a binomial interpolation path as in the paper by E. Hillion \cite{Hil14}.

\begin{theorem}\label{thmZ} On the space $(\Z^n,m,d,L)$, the relative entropy $H(\cdot|m)$ satisfies the 0-displacement convexity property \eqref{deplacebis}. In other words, for any  Schr\"odinger bridge at zero temperature $(\widehat Q_t)_{t\in[0,1]}$ joining any two measures $\nu_0,\nu_1\in\Pc_b(\Z^n)$, the map  $t\mapsto H(\widehat Q_t|m)$ is convex.
\end{theorem}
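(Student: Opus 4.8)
The plan is to show that $f(t):=H(\widehat Q^0_t\,|\,m)$ is convex on $[0,1]$. Since $m$ is the counting measure, $f(t)=\sum_{z\in\Z^n}\widehat Q^0_t(z)\log\widehat Q^0_t(z)$, and since $\nu_0,\nu_1$ have finite support so does each $\widehat Q^0_t$ by \eqref{suppQt}; moreover, by \eqref{limitbridge} and \eqref{defnut0}, for each fixed $z$ the map $t\mapsto\widehat Q^0_t(z)$ is a polynomial in $t$ and $\supp(\widehat Q^0_t)$ does not depend on $t\in(0,1)$, so $f$ is $C^\infty$ on $(0,1)$ and it suffices to prove $f''(t)\ge 0$ for every $t\in(0,1)$; convexity on the closed interval then follows from continuity of $f$ at the endpoints. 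The first step is then to invoke the key theorem: Theorem \ref{limdevseconde} together with Lemma \ref{lemconvex} reduces the $C$-displacement convexity property \eqref{deplacebis} — here with $C=0$, which is exactly the asserted convexity of $f$ — to a pointwise inequality along the discrete geodesics carrying the bridge, involving only the interpolation kernels ${\nu_t^0}^{x,y}$ and the weights $r(x,z,v,y)$ of \eqref{defr}. I would then write that reduced inequality explicitly and verify it for $\Z^n$.

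The decisive structural feature of $\Z^n$ is that the interpolation kernel factorizes over coordinates. From \eqref{defnut0} and $L^{d(x,y)}(x,y)=\binom{d(x,y)}{|y_1-x_1|,\dots,|y_n-x_n|}$ one gets
\[{\nu_t^0}^{x,y}(z)=\1_{[x,y]}(z)\prod_{i=1}^{n}\binom{|y_i-x_i|}{|z_i-x_i|}\,t^{|z_i-x_i|}(1-t)^{|y_i-z_i|},\]
so that under ${\nu_t^0}^{x,y}$ the coordinates $z_1,\dots,z_n$ are independent and $|z_i-x_i|$ is $\B_t^{|y_i-x_i|}$-distributed; the geodesic weight $r$ of \eqref{defr} factorizes accordingly. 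Hence the ``curvature integrand'' produced by Theorem \ref{limdevseconde} on $\Z^n$ splits into a sum of $n$ one-dimensional contributions, each equal to the curvature integrand of the binomial interpolation $\B_t^{d}$ on a segment $\{x_i,\dots,y_i\}\subset\Z$. It therefore suffices to establish the one-dimensional estimate, for which the input I would use is the log-concavity of binomial coefficients, $\binom{d}{k}^2\ge\binom{d}{k-1}\binom{d}{k+1}$, together with the discrete-derivative rule $\frac{d}{dt}\B_t^d(k)=d\bigl(\B_t^{d-1}(k-1)-\B_t^{d-1}(k)\bigr)$; the flatness of $\Z^n$ is reflected in the fact that the optimal constant in this local inequality is $0$.

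The main obstacle is not the one-dimensional combinatorics but the fact that $\widehat Q^0_t=\iint{\nu_t^0}^{x,y}\,d\widehat\pi^0(x,y)$ is a genuine mixture over the $W_1$-optimal coupling $\widehat\pi^0$, whose components have varying length $d(x,y)$, and the relative entropy of a mixture is not the average of the relative entropies of its components: indeed $f(0)=H(\nu_0|m)$, whereas $\iint H({\nu_0^0}^{x,y}|m)\,d\widehat\pi^0=\iint H(\delta_x|m)\,d\widehat\pi^0=0$ because $m$ is counting, so one cannot deduce convexity of $f$ from the Dirac-to-Dirac case (where it reduces to concavity of $N\mapsto$ entropy of the law $\B_t^N$) by Jensen's inequality. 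This is precisely what the slowing-down procedure underlying Theorem \ref{limdevseconde} is designed to handle: it yields a representation of the second-order behaviour of $f$, as a $\gamma\to 0$ limit of the corresponding positive-temperature quantities, in which the mixture is controlled pointwise in $z$ rather than component by component, thereby reducing matters to the local inequality above. The remaining work is thus twofold: (a) to make the reduction of Theorem \ref{limdevseconde} explicit on $\Z^n$ using the coordinatewise factorization of ${\nu_t^0}^{x,y}$ and of $r$, and (b) to verify the resulting one-dimensional inequality via binomial log-concavity.
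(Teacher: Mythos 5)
Your overall scaffolding is the right one and matches the paper: fix $\nu_0,\nu_1\in\Pc_b(\Z^n)$, apply Theorem \ref{limdevseconde} to bound $\liminf_{\gamma_k\to 0}\varphi''_{\gamma_k}(t)$ and $\liminf_{\gamma_k\to 0}\psi''_{\gamma_k}(t)$ from below, and feed the bounds into Lemma \ref{lemconvex} with $\varphi_0''=\psi_0''=0$. (Your preliminary reduction ``$f$ is smooth, so it suffices to show $f''\ge 0$'' is not needed for this route: Lemma \ref{lemconvex} delivers the convexity inequality \eqref{deplacebis} directly, without ever differentiating $t\mapsto H(\widehat Q^0_t|m)$ at zero temperature.) The problem is that the decisive step --- showing that the limiting integrand produced by Theorem \ref{limdevseconde} is nonnegative on $\Z^n$ --- is exactly the part you leave as a plan, and the plan as stated does not go through. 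The quantities entering that integrand are not the kernels ${\nu_t^0}^{x,y}$ themselves but the coupling-dependent ratios $A_t(z,\tz)={\mathrm a}_t(z,\tz,y)/a_t(z,y)$ and ${\mathbbm A}_t(z,\ttz)={\mathbbm a}_t(z,\ttz,y)/a_t(z,y)$ of Lemma \ref{lemintaibis}, where $a_t(z,y)=\int {\nu_t^0}^{w,y}(z)\,d\widehat\pi^0_{_\leftarrow}(w|y)$ is a mixture over $w$. A mixture of product measures is not a product measure, so your central structural claim --- that the curvature integrand ``splits into a sum of $n$ one-dimensional contributions, each equal to the curvature integrand of the binomial interpolation on a segment of $\Z$'' --- is false in general; the coordinatewise factorization of ${\nu_t^0}^{x,y}$ and of $r$ does not survive the integration against $\widehat\pi^0_{_\leftarrow}(\cdot|y)$. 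Moreover the integrand contains the square $\bigl(\sum_{i,\pm}A_t(z,\sigma_{i\pm}(z))\bigr)^2$, whose cross-coordinate terms cannot be attributed to any single coordinate, so even granting a 1-D inequality for each coordinate you would not control the full expression.

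What actually makes $\Z^n$ work in the paper is not binomial log-concavity but a purely algebraic mechanism: writing $A_{i\pm}=A_t(z,\sigma_{i\pm}(z))$ and $A_{i\pm j\pm'}={\mathbbm A}_t(z,\sigma_{j\pm'}\sigma_{i\pm}(z))$, one symmetrizes the two-step sum and uses $\rho(a,b)+\rho(a',b)\ge -2aa'$ (consequence of \eqref{infbG}, stated as \eqref{infbGG}) for the cross-coordinate pairs, and $\rho(a,b)\ge -a^2$ for the same-coordinate, same-sign pairs; expanding the square, every negative contribution is absorbed and the leftover is $2\sum_{i}A_{i+}A_{i-}\ge 0$, using crucially that $\sigma_{i-}\sigma_{i+}=\mathrm{id}$ so the pair $(i+,i-)$ has no two-step term to fight against. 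No information about the values of $A_{i\pm}$, ${\mathbbm A}$ beyond nonnegativity is needed, and the derivative rule for $\B_t^d$ and the log-concavity $\binom{d}{k}^2\ge\binom{d}{k-1}\binom{d}{k+1}$ never enter; those ingredients belong to Hillion's direct zero-temperature computation, which requires a specific cyclically monotone coupling and much heavier bookkeeping, precisely what the slowing-down approach is designed to avoid. So to repair your argument you should drop the factorization/log-concavity step and instead verify the pointwise inequality for the Theorem \ref{limdevseconde} integrand by this square-expansion and $\rho$-pairing argument (or supply a complete substitute for it); as written, the proposal's key inequality is neither stated nor proved.
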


Therefore   the space $(\Z^n,d,m,L)$ has non-negative $W_1$ or $T_2 $-entropic curvature. Actually,  it can not be positive and one may say that $(\Z^n,d,m,L)$ is a flat space.  Indeed, if property \eqref{deplacebis} holds with $C_t(\widehat\pi ^0)=KW^2_1(\nu_0,\nu_1)$, $K>0$, then choosing $\nu_0=\delta_x$ and $\nu_1=\delta_y$ for $x,y\in\X$, one gets for $t=1/2$
\[-\log |[x,y]| = -\log | \supp({Q_{_{1/2}}}\!\!\!\!^{x,y})| =H\big({Q_{_{1/2}}}\!\!\!\!^{x,y}|m\big)\leq -\frac K8 d^2(x,y),\]
where for a finite set $A$, $|A|$ denotes its cardinality. Since $|[x,y]|=\prod_{i=1}^n (|y_i-x_i|+1)$, the last inequality implies for any $x,y\in \Z^n$,
\[\left(\sum_{i=1}^n |y_i-x_i|\right)^2\leq \frac8K \sum_{i=1}^n \log (|y_i-x_i|+1),\]
which is impossible for large values of $|y_i-x_i|$. A similar proof holds replacing $W^2_1(\nu_0,\nu_1)$ by $T_{c_2}(\widehat \pi)$.

The convexity property along  binomial interpolation paths given by Theorem \ref{thmZ}   has been first obtained by E. Hillion \cite{Hil14}. To compare with Hillion's method, the main interest of our approach is its simplicity.
As  explained in the next section, we first work  at positive temperature $\gamma>0$ so that the second derivative of the function $t\mapsto H(\widehat Q^\gamma_t|m)$ can be  easily  computed using  $\Gamma_2$ calculus. Then we analyse the behavior of the second derivative of this function as temperature goes to 0, and   get a nonnegative lower bound at zero temperature on $\Z^n$. This provides the convexity property of $t\mapsto H(\widehat Q_t|m)$.   In Hillion's paper,   one may say that computations are done  directly at zero temperature. It leads to harder computations and  the construction of the optimal coupling,  related to a cyclic monotonicity property,  is  rather difficult to handle. 

In the paper \cite{GRST19} by Gozlan \& al.,  another kind of convexity property of entropy has been proposed that generalizes a new Prekopa-Leindler inequality on $\Z$ by Klartag-Lehec \cite{KL19} (see also the more recent paper \cite{HKS19} by Halikias-Klartag-Slomka).  Their convexity property is of different nature, it is  only valid for $t=1/2$.  More precisely, given $\nu_0,\nu_1\in {\Pc_b}(\Z)$ they define two midpoint measures
\[
\nu_- = {m_-} \# \pi \qquad \text{and}\qquad  \nu_+ = {m_+} \# \pi,
\]
where $\pi$ is the monotone coupling between $\nu_0$ and $\nu_1$ (which is a $W_1$-optimizer), and for all $x,y\in \Z$,
\[m_-(x,y):=\left\lfloor \frac{x+y}{2}\right \rfloor,\qquad m_+(x,y):=\left\lceil \frac{x+y}{2} \right\rceil.\]
 Gozlan \& al. result \cite[Theorem 8]{GRST19} states that 
\begin{equation*}
\frac12 H(\nu_-| m) + \frac12 H(\nu_+| m) \leq \frac12 H(\nu_{0}| m) + \frac12 H(\nu_{1}| m) .
\end{equation*}
As a main  difference, the measures $\nu_+$ and $\nu_-$ are only concentrated on the midpoints $m_-(x,y)$, $m_+(x,y)$, for $x\in \supp(\nu_0)$ and $y\in \supp(\nu_1)$. Since $\nu_+$ and $\nu_-$ are much more concentrated than $\widehat Q_{1/2}$, their result directly implies a Brunn-Minkovsky type of inequality. Unfortunately it seems that their approach do not extend to other values of $t\in(0,1)$.

\subsection{The complete graph} Let $\X$ be a finite set and $\mu$  be any probability measure on  $\X$. The set of edges of the complete graph $G=(\X,E)$ is $E:=\X\times\X\setminus\{(x,x)\,|\, x\in \X\}$ and the 
graph distance is the Hamming distance $d(x,y):= \1_{x\neq y} $ for any   $x,y\in \X$.
The measure $\mu$  is reversible with respect to the generator $L$ given by : for any $z,z'\in \X$ with $z\neq z'$,
\[L(z,z'):=\mu(z'), \qquad L(z,z):=-(1-\mu(z)).\]
The Schr\"odinger bridge at zero temperature $(\widehat Q_t)_{t\in[0,1]}$ given by \eqref{limitbridge}, is the same as the bridge used in \cite{GRST14} for  the complete graph (see section 2.1.1): for any $x,y\in \X$ one has 
\begin{eqnarray}\label{expnu0complete}
  {Q_t}\!^{ x,y}(z)= (1-t)\,\delta_x(z) +t\, \delta_y(z)
 ,\qquad z\in \X,
 \end{eqnarray}
 and therefore $\widehat Q_t=(1-t)\nu_0+t\nu_1$.
 \begin{theorem}\label{thmcomplete} On the finite space $(\X,\mu,d,L)$, the relative entropy $H(\cdot|\mu)$ satisfies the $C$-displacement convexity property \eqref{deplacebis}, with $C=(C_t)_{t\in(0,1)}$ given by:  for any $\nu_0,\nu_1\in{\mathcal P}(\X)$ with associated  limit  Schr\"odinger problem optimizer 
 $\widehat Q\in \Pc(\Omega)$,
 \[
C_t(\widehat \pi):=\int h_t \left(\int \1_{w\neq x}\, d \widehat{\pi}_{_\rightarrow}(w|x)\right)d\nu_0(x)+ \int h_{1-t}\left(\int \1_{w\neq y} \,d \widehat{\pi}_{_\leftarrow}(w|y)\right) d\nu_1(y), 
\]
where   $ \widehat\pi=\widehat Q_{0,1}$, and for any $t\in (0,1)$, $u\geq 0$, 
\[
h_t(u):=\frac{ th(u)-h(tu)}{t(1-t)},\qquad\mbox{with} \qquad h(u)=
\left\{\begin{array}{ll}
2\left[(1-u)\log(1-u)+u\right] &\mbox{ for } \;0\leq u\leq 1,
\\
+\infty &\mbox{ for } \;u>1.
\end{array}\right.
\]
The cost $C_t(\widehat \pi)$ can be compared  with a function of the  total variation distance 
\begin{equation}\label{totvar}
 \|\nu_0-\nu_1\|_{TV}:=2\sup_{A\subset X} |\nu_0(A)-\nu_1(A)|= 2 \inf_{\pi\in \Pi(\nu_0,\nu_1)} \int \1_{x\neq y} d\pi(x,y)= 2 W_1(\nu_0,\nu_1).
 \end{equation}
Namely, one has 
\begin{equation}\label{zut}
C_t(\widehat \pi)\geq (1+W_1(\nu_0,\nu_1)) \, k_t\left(\frac{W_1(\nu_0,\nu_1)}{1+W_1(\nu_0,\nu_1)}\right),
\end{equation}
where for all $v\in [0,1/2]$, 
\begin{equation}\label{med}
k_t(v):=\inf_{\alpha,\beta, 0<\alpha+\beta\leq 1}\left\{ \alpha h_t\left(\frac v\alpha\right)+\beta h_{1-t}\left(\frac v\beta\right)\right\}\geq  \frac{4v^2}{1-v}.
 \end{equation}
\end{theorem}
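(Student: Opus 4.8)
The plan is to exploit the explicit form \eqref{expnu0complete} of the Schr\"odinger bridge at zero temperature on the complete graph, together with the general strategy of section \ref{proof}: first establish the displacement-convexity inequality \eqref{deplacebis} at positive temperature $\gamma$ via $\Gamma_2$-type computations and then pass to the limit $\gamma\to 0$. Since on the complete graph all pairs of distinct points are neighbours, $d(x,y)=\1_{x\neq y}$, so for $x\neq y$ one has $[x,y]=\{x,y\}$, $d(x,z)\in\{0,1\}$, and the binomial law $\B_t^{1}$ simply puts mass $1-t$ on $z=x$ and $t$ on $z=y$; this is exactly \eqref{expnu0complete}. Consequently $\widehat Q^0_t(z)=(1-t)\nu_0(z)+t\,\nu_1(z)+\big(\text{cross terms from }\widehat\pi^0\big)$, and the task reduces to comparing $H(\widehat Q^0_t|\mu)$ with the convex combination of endpoint entropies. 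I expect the identification of the cost $C_t$ with the stated formula involving $h_t$ to come out of a careful bookkeeping of the measure $\widehat Q^0_t$ written in terms of $\nu_0$, the forward kernel $\widehat\pi^0_{_\rightarrow}$, and symmetrically of $\nu_1$ and $\widehat\pi^0_{_\leftarrow}$, where the quantity $\int\1_{w\neq x}\,d\widehat\pi^0_{_\rightarrow}(w|x)$ is precisely the probability that a mass starting at $x$ actually moves. The function $h$ with its $(1-u)\log(1-u)+u$ shape should arise from the entropy of a two-point (stay/move) split, and $h_t(u)=\big(th(u)-h(tu)\big)/\big(t(1-t)\big)$ is the natural "second-difference" quantity measuring the defect of linearity in $t$.

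The key steps, in order, are: (1) record from \eqref{limitbridge} and \eqref{expnu0complete} that $\widehat Q^0_t=(1-t)\,\mathrm{law}_0+t\,\mathrm{law}_1$ in an appropriate disintegrated sense, writing $\widehat Q^0_t(z)=\int\big((1-t)\delta_x(z)+t\delta_y(z)\big)d\widehat\pi^0(x,y)$; (2) decompose $\widehat\pi^0$ both as $\nu_0(x)\widehat\pi^0_{_\rightarrow}(y|x)$ and as $\nu_1(y)\widehat\pi^0_{_\leftarrow}(x|y)$ and split each term according to whether $w=x$ (no jump) or $w\neq x$ (jump), so that $\widehat Q^0_t$ is expressed through the "jump probabilities" $p(x):=\int\1_{w\neq x}\,d\widehat\pi^0_{_\rightarrow}(w|x)$ and $q(y):=\int\1_{w\neq y}\,d\widehat\pi^0_{_\leftarrow}(w|y)$; (3) apply the entropy computation/convexity machinery — this is where I would invoke the analogue of the positive-temperature $\Gamma_2$ estimate from section \ref{proof}, or alternatively a direct convexity lemma for relative entropy under a two-point interpolation, exactly as in \cite[Section 2.1.1]{GRST14} for the complete graph — to obtain \eqref{deplacebis} with the stated $C_t$; (4) establish \eqref{zut}: optimize the lower bound over the placement of mass, using that $\int p\,d\nu_0=\int q\,d\nu_1=W_1(\nu_0,\nu_1)$ (by \eqref{totvar}) and Jensen's inequality applied to the convex function $h_t$ (convexity of $h_t$ in $u$ must be checked, but follows from convexity of $h$), which reduces the infimum over kernels to the two-parameter infimum $k_t$ in \eqref{med}; (5) prove the elementary bound $k_t(v)\ge 4v^2/(1-v)$ by a direct minimization in $\alpha,\beta$ — by symmetry and convexity one expects the optimal $\alpha,\beta$ to equalize the two terms, reducing to a one-variable calculus problem, and the bound $h_t(u)\ge$ (something quadratic) plugged back in.

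The main obstacle I anticipate is step (3)–(4): controlling $H(\widehat Q^0_t|\mu)$ precisely enough to extract the sharp cost $C_t$ with the correct function $h$. The subtlety is that $\widehat Q^0_t$ mixes, for each target point $z$, contributions from mass that was already at $z$ and mass arriving at $z$ from elsewhere, so the naive convexity $H\big((1-t)\nu_0+t\nu_1\big|\mu\big)\le(1-t)H(\nu_0|\mu)+tH(\nu_1|\mu)$ is too lossy — one must keep the correlation structure encoded by $\widehat\pi^0$ and show that the entropy produced by the "partial transport" is at least $\tfrac{t(1-t)}2 C_t$. I would handle this by the positive-temperature route: compute $\frac{d^2}{dt^2}H(\widehat Q^\gamma_t|m)$ using the explicit form \eqref{Qtstructure} and reversibility, bound it below uniformly in $\gamma$, integrate twice in $t$, and let $\gamma\to0$ using \eqref{defnut0}–\eqref{limitbridge} and lower semicontinuity of relative entropy; the algebra simplifies dramatically on the complete graph because $L(z,z')=\mu(z')$ makes $\P^\gamma_t$ essentially a two-state object. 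Steps (4)–(5) are then routine convex-analysis estimates, and the comparison with $\|\nu_0-\nu_1\|_{TV}$ in \eqref{totvar} is immediate from the definitions.
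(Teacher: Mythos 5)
Your high-level plan coincides with the paper's: specialize the general machinery (Lemma \ref{deriv2phibis}, Theorem \ref{limdevseconde}, Lemma \ref{lemconvex}) to the complete graph, where the $\rho$-terms disappear because no two points are at distance two, and then read off the cost. But the content of the first half of the theorem is the exact identification of $C_t$ with the $h_t$-formula, and your step (3) leaves precisely this as ``careful bookkeeping''. The paper gets it from the explicit complete-graph computation of the limiting quantities of Lemma \ref{lemintaibis}: $a_t(z,y)=(1-t)\,\widehat{\pi}^0_{_\leftarrow}(z|y)+t\,\delta_y(z)$ and ${\mathrm a}_t(z,\tz,y)=\1_{z=y}\,\widehat{\pi}^0_{_\leftarrow}(\tz|y)/\mu(\tz)$, so that only the point $z=y$ contributes and
$\liminf\varphi''_{\gamma_k}(t)\geq\int\bigl(1-\widehat{\pi}^0_{_\leftarrow}(y|y)\bigr)^2\big/\bigl(1-(1-t)(1-\widehat{\pi}^0_{_\leftarrow}(y|y))\bigr)\,d\nu_1(y)$, which is exactly $\varphi_0''(t)$ for $\varphi_0(t)=\tfrac12\int h\bigl((1-t)(1-\widehat{\pi}^0_{_\leftarrow}(y|y))\bigr)d\nu_1(y)$; Lemma \ref{lemconvex} then converts $(1-t)\varphi_0(0)+t\varphi_0(1)-\varphi_0(t)$ into the $h_{1-t}$ term of $C_t$. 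Your heuristic that $h$ ``should arise from a two-point stay/move split'' points in the right direction, but without this computation the stated cost is not established.

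The more serious gap is in your step (4), the proof of \eqref{zut}. Jensen's inequality applied over the whole space, using only $\int\1_{w\neq x}d\widehat{\pi}^0_{_\rightarrow}(w|x)\,d\nu_0(x)=W_1(\nu_0,\nu_1)$, gives $C_t\geq h_t(W_1)+h_{1-t}(W_1)$, which corresponds to the pair $\alpha=\beta=1/(1+W_1)$ in the definition of $k_t$; since $W_1\leq 1$ on the complete graph, this pair violates the constraint $\alpha+\beta\leq 1$ and therefore does not dominate the constrained infimum $k_t$. The paper instead applies Jensen only on the sets $D_{_\rightarrow}$ and $D_{_\leftarrow}$ of points from which (resp.\ to which) mass actually moves, and then uses the crucial structural fact — Lemma \ref{opt} \ref{mon2}, a consequence of the $d$-cyclical monotonicity of the $W_1$-optimal coupling $\widehat{\pi}^0$ — that $D_{_\rightarrow}$ and $D_{_\leftarrow}$ are disjoint, combined with $\nu_0(D_{_\rightarrow})-\nu_1(D_{_\rightarrow})\leq W_1$, to obtain $\nu_0(D_{_\rightarrow})+\nu_1(D_{_\leftarrow})\leq 1+W_1$; this is what makes $\alpha=\nu_0(D_{_\rightarrow})/(1+W_1)$, $\beta=\nu_1(D_{_\leftarrow})/(1+W_1)$ admissible and yields \eqref{zut}. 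This optimality/monotonicity ingredient is absent from your outline, and without it the argument for \eqref{zut} fails. (For \eqref{med}, note also that the paper's clean route is the kernel representation $h_t(v)=\int_0^1\frac{v^2}{1-uv}K_t(u)\,du$, rather than a symmetry argument, since $h_t$ and $h_{1-t}$ are not equal.)
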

\begin{comments} 
\begin{enumerate}[label*=(\alph*)]
\item This result is  an improved version of the convexity
 properties of the relative entropy  obtained  by Gozlan \& al. 
 \cite[Proposition 4.1]{GRST14}. Indeed, from  the estimate \eqref{zut} and the inequality \eqref{med} (whose proofs are given at the end of the proof of Theorem \ref{thmcomplete}), one gets 
\begin{equation}\label{lbpins}
C_t(\widehat \pi)\geq 4 W_1(\nu_0,\nu_1)^2=\|\nu_0-\nu_1\|_{TV}^2,
\end{equation}
and from
 the inequality  $h_t(u)\geq  u^2$, for all $u\in[0,1]$, $t\in (0,1)$, it follows that 
\[C_t(\widehat \pi)\geq \widetilde T_2(\nu_0,\nu_1),\]
with 
\begin{equation*}
\qquad\widetilde T_2(\nu_0,\nu_1):= \inf_{\pi\in \Pi(\nu_0,\nu_1)}  \Big[\int  \left(\int \1_{w\neq x}\, d {\pi}_{_\rightarrow}(w|x)\right)^2d\nu_0(x)+ \int \left(\int \1_{w\neq y} \,d {\pi}_{_\leftarrow}(w|y)\right)^2 d\nu_1(y)\Big].
\end{equation*}
These lower bounds on $C_t(\widehat \pi)$ exactly provide the convexity
 properties of Proposition 4.1 \cite{GRST14}. 
 \item \label{commentcomplet} Since $\mu$ is a probability measure, by Jensen's inequality $H(\widehat Q_t|\mu)\geq 0$. Therefore, the displacement convexity property \eqref{deplacebis} together with the bound  \eqref{lbpins} imply the
 well-known Csiszar-Kullback-Pinsker inequality  by optimizing over all $t\in(0,1)$ (see \cite[Remark 4.2]{GRST14}), namely  
 \[  \frac12\|\nu_0-\nu_1\|_{TV}^2\leq \left(\sqrt{H(\nu_0|\mu)}+\sqrt{H(\nu_1|\mu)}\right)^2, \qquad \forall  \nu_0,\nu_1\in {\mathcal P}(\X) .\]
The optimality of the constant $1/2$ on the left-hand side of this inequality gives the optimality of the constant 4 in \eqref{lbpins}. Therefore the $W_1$-entropic curvature of the complete graph is $4$. 

Observe that \eqref{deplacebis} actually provides an improved version of the  Csiszar-Kullback-Pinsker inequality, namely for any $t\in(0,1)$, 
\[  \frac12 (1+W_1(\nu_0,\nu_1)) \, k_t\left(\frac{W_1(\nu_0,\nu_1)}{1+W_1(\nu_0,\nu_1)}\right)  \leq \frac1t \,H(\nu_0|\mu)+\frac1{1-t} \,H(\nu_1|\mu),\qquad \forall  \nu_0,\nu_1\in {\mathcal P}(\X) .\]
 \end{enumerate}
 \end{comments}

\subsection{Product measures on the discrete hypercube}
In this section, the reference space is the discrete hypercube $\X=\{0,1\}^n$ equipped with a product of Bernoulli measures \[\mu=\mu_1\otimes\cdots\otimes \mu_n,\]
with  for any $i\in [n]$, $\mu_i(1)=1-\mu_i(0):=\alpha_i$, $\alpha_i\in(0,1)$.

 For any $z=(z_1,\ldots,z_n)\in\{0,1\}^n$ and any $i\in [n]$ let $\sigma_i(z)$ denotes the neighbour of $z$ according to the $i$'s coordinate defined by  
 \[\sigma_i(z):=(z_1,\ldots , z_{i-1},\overline z_i,   z_{i+1}, \ldots , z_n),\]
 where $\overline{ z_i}:=1-z_i$.
 The  set of edges on $\{0,1\}^n$ is 
 \[E:=\Big \{(z, \sigma_i(z))\,\big|\, z\in \{0,1\}^n, i\in [n]\Big\},\]
 and the graph distance  is the {\it Hamming distance} :
\[d(x,y):=\sum_{i=1}^n \1_{x_i\neq y_i}, \qquad x,y\in \{0,1\}^n.\]

The measure $\mu$ is reversible with respect to the generator $L$ given by: for all $z\in \{0,1\}^n$,
\[L(z, \sigma_i(z)) :=(1-\alpha_i)\,z_i+ \alpha_i\overline{  z_i},\quad \forall i\in[n],\]
and 
$L(z,z):=-\sum_{i=1} ^n L(z, \sigma_i(z)).$ 
Observe that setting 
\[L_i(z_i,\overline{z_i}):=(1-\alpha_i)\,z_i+ \alpha_i\overline{  z_i},\quad z_i\in\{0,1\},\] and 
$L_i(z_i,z_i)=-L_i(z_i,\overline{z_i})$,  
the Bernoulli measure $\mu_i$ is reversible with respect to $L_i$ and one has 
\[L:=L_1\oplus\cdots\oplus L_n.\]
Easy computations give, for any $x,y\in \{0,1\}^n$,
\begin{eqnarray}\label{Ldcube}
L^{d(x,y)}(x,y)= d(x,y)! \prod_{i=1}^n(1-\alpha_i)^{[x_i-y_i]_+}\alpha_i^{[y_i-x_i]_+},
\end{eqnarray} 
and it follows that the Schr\"odinger bridge at zero temperature $(\widehat Q^0_t)_{t\in[0,1]}$ joining two probability measures $\nu_0$ and $\nu_1$ is given by \eqref{limitbridge}, with according to \eqref{defnut0}
\begin{equation}\label{pontxycube}
 {Q_t}\!^{ x,y}(z)=  \1_{[x,y]}(z)\;t^{d(x,z)} (1-t)^{d(z,y)} ,\qquad z\in\{0,1\}^n.\\
 \end{equation}
 This path has exactly  the same structure as the one used in \cite{GRST14} to establish entropic curvature bounds on the product space $(\{0,1\}^n, \mu)$ (see section 2.1.2). 

\begin{theorem}\label{thmcube} Let $\mu=\mu_1\otimes\cdots \otimes \mu_n$ be a product probability measure on  the discrete hypercube $\X=\{0,1\}^n$.
On the space $(\{0,1\}^n,\mu,d,L)$, the relative entropy $H(\cdot|\mu)$ satisfies  the $C$-displacement convexity property \eqref{deplacebis}, with $C=(C_t)_{t\in (0,1)}$ defined by: for any $\nu_0,\nu_1\in{\mathcal P}(\{0,1\}^n)$ with associated  limit  Schr\"odinger problem optimizer $\widehat Q\in \Pc(\Omega)$,
\begin{equation*} 
C_t(\widehat \pi):=\max \Big[ \frac4n W_1^2(\nu_0,\nu_1),\,\frac4n T_{c_2}(\widehat \pi) ,  \widetilde T_t(\widehat \pi)\Big],
\end{equation*}
where $ \widehat\pi=\widehat Q_{0,1}$,
the cost function $c_2$ of $T_{c_2}$  is defined by 
\[c_2(h):=\max\left\{ \frac {h(h-1)}2,
h^2-2h(1+\log h)\1_{h\neq 0}\right\},\quad h\in \N,\]
the cost $\widetilde T_t$ is defined by 
\[\widetilde T_t(\widehat \pi):=\int \sum_{i=1}^n h_t\left(\Pi^i_\rightarrow(x)\right) d\nu_0(x)+\int \sum_{i=1}^n h_{1-t}\left(\Pi^i_\leftarrow(y)\right) d\nu_1(y),\]
with the 
 definition of the   functions $h_t$, $t\in(0,1)$ given  in Theorem \ref{thmcomplete} and setting 
\[\Pi^i_\rightarrow(x):=\int \1_{w_i\neq x_i} d \widehat{\pi}_{_\rightarrow}(w|x),\quad \Pi^i_\leftarrow(y):=\int \1_{w_i\neq y_i} d \widehat{\pi}_{_\leftarrow}(w|y).\; \]
\end{theorem}
\begin{comments} 
\begin{enumerate}[label*=(\alph*)]
\item The first lower bound  $C_t(\widehat \pi) \geq \frac4n W_1^2(\nu_0, \nu_1)^2$    gives the $W_1$-entropic curvature of the discrete hypercube $\{0,1\}^n$ bigger and asymptotically equal to $4/n$ as $n$ goes to infinity. Indeed, 
as in the previous part to recover the Csiszar-Kullback-Pinsker inequality,
 the well-known $W_1$-optimal transport-entropy inequality  on the discrete hypercube for product probability measures is a consequence the displacement convexity property \eqref{deplacebis}, using 
$H(\widehat Q_t|\mu)\geq 0$ and optimizing over all $t\in(0,1)$. Namely, one has  
\[\frac2n W_1^2(\nu_0, \nu_1)\leq \left(\sqrt{H(\nu_0|\mu)}+\sqrt{H(\nu_1|\mu)}\right)^2,\qquad \forall  \nu_0,\nu_1\in {\mathcal P}(\{0,1\}^n).\]
From the central limit Theorem, the constant $2/n$ (related to the subgaussian constant of the space as mentioned before)  is known to be asymptotically optimal as $n$ goes to infinity.

\item 
 The second lower bound $C_t(\widehat \pi)\geq \frac4n \,T_{c_2}(\widehat{\pi})$ can not be derived from a tensorisation property such as in Theorem \ref{tenso}. Indeed, for $n=1$, on the two points space, one has $T_{c_2}(\widehat{\pi})=0$. Therefore,  the Schr\"odinger approach allows to capture a property of the hypercube that can not be derived from a  tensorisation property as it is often the case.  
 
 This second  lower bound also gives a new kind of curved Pr\'ekopa-Lindler inequality on the discrete hypercube by applying Theorem \ref{prek}. 
It also implies the following new transport-entropy inequality on the discrete hypercube,  for any $\nu_0,\nu_1\in \Pc(\{0,1\}^n)$,   
\begin{equation}\label{talcub} 
\frac2n T_{c_2}(\nu_0,\nu_1) \leq \left(\sqrt{H(\nu_0|\mu)}+\sqrt{H(\nu_1|\mu)}\right)^2.
\end{equation}
As opposed to Marton's transport inequality or to $W_2$-Talagrand's transport inequality on  Euclidean space,   inequality \eqref{talcub} on the  hypercube  does not tensorize.
Nevertheless, it can be interpreted as a discrete analogue on the hypercube of the $W_2$-Talagrand's transport inequality. 
Indeed, from \eqref{talcub}, applying the central limit theorem, one exactly  recovers the well-known $W_2$-transport entropy inequality for the standard Gaussian probability measure $\gamma$ on $\R$, due to Talagrand \cite{Tal96c}. Namely, one has for any absolutely continuous  probability measure $\nu\in\Pc_2(\R)$,
\begin{eqnarray}\label{tal}
W_2^2(\nu,\gamma)\leq 2 H(\nu|\gamma).
\end{eqnarray}
For a sake of completeness, the proof of this implication is given in Appendix A (see Lemma \ref{cubegauss}). 
As a byproduct of this observation, since the constant 2 is optimal in Talagrand's inequality \eqref{tal}, the constant $2/n$ in \eqref{talcub} and the constant $4/n$ in $C_t(\widehat \pi)\geq \frac4n \,T_{c_2}(\widehat{\pi})$ are also asymptotically optimal in $n$. Therefore the $T_2$-entropic curvature of the discrete hypercube is asymptotically equivalent to $4/n$ as $n$ goes to infinity. 

Actually, according to the proof of Theorem \ref{thmcube},  for each fixed $t\in(0,1)$,  the cost function $c_2$ can be improved, one has 
 \[C_t(\widehat \pi)\geq\frac4n\iint w_t(d(x,y))\, d\widehat \pi(x,y),\] 
where for any $d\in \N$
\[w_t(d):=\max\left\{\frac{d(d-1)}2,\int_0^1 v_s(d) \,K_t(s)\,ds\right\}\geq c_2(d),\]
with 
\begin{equation*}
  v_t(d)
:=\frac14\left(\sum_{k=0}^{d} \sqrt{k(k-1)}\, \Big(\frac{\B_t^{d} (k)}t+\frac{\B_{1-t}^{d} (k)}{1-t}\Big) \right)^2. 
\end{equation*}

\item  The inequality  $h_t(u)\geq  u^2$, for all $u\in[0,1]$, $t\in (0,1)$ gives 
$\widetilde T_t(\widehat \pi) \geq \widetilde T_2(\nu_0,\nu_1)$ with
\begin{equation}\label{t2tilde}
 \widetilde T_2(\nu_0,\nu_1):= \inf_{\pi\in \Pi(\nu_0,\nu_1)} \Big[\int \sum_{i=1}^n \left(\int \1_{w_i\neq x_i} d {\pi}_{_\rightarrow}(w|x)\right)^2 d\nu_0(x)
+ \int \sum_{i=1}^n \left(\int \1_{w_i\neq y_i} d {\pi}_{_\leftarrow}(w|y)\right)^2 d\nu_1(y)\Big],
\end{equation}
So, from the third lower bound $\widetilde T_t(\widehat \pi)$ of  $C_t(\widehat \pi)$, one recovers a similar convexity property as the one obtained for the discrete cube in \cite[Corollary 4.4]{GRST14}.
The only difference is  the expression \eqref{limitbridge} of the path $(\widehat Q_t)_{t\in[0,1]}$, the coupling measure $\widehat \pi$  is  replaced by an optimal Knothe-Rosenblatt coupling. 

The following symmetric version of  Marton's transport entropy inequality on the discrete hypercube is a consequence of the last lower bound on $C_t(\widehat \pi)$: 
for any $\nu_0,\nu_1\in \Pc(\{0,1\}^n)$,
\[\frac12 \widetilde T_2(\nu_0,\nu_1)\leq \left(\sqrt{H(\nu_0|\mu)}+\sqrt{H(\nu_1|\mu)}\right)^2.\]

\item \label{commentlogsob} The lower bound $\widetilde T_t(\widehat \pi)$ is also well adapted to recover  modified logarithmic Sobolev inequality on the discrete hypercube as $t$ goes to 0. Assume $\nu_0$ is a probability measure with positive density $f$. 
Observe first that 
 \[\lim_{t\to 0} \widetilde T_t(\widehat \pi)=\int \sum_{i=1}^n h\left(\Pi^i_\rightarrow(x)\right) d\nu_0(x)+\int \sum_{i=1}^n h_{1}\left(\Pi^i_\leftarrow(y)\right) d\nu_1(y),\]
where for $u\in[0,1)$, $h_1(u):=\lim_{t\to 1}  h_t(u)=uh'(u)-h(u)=2(-u-\log(1-u)) $.
For any real function $g$ on $\{0,1\}^n$, let us note \[ D_ig(x):=g(\sigma_i(x))-g(x), \qquad x\in \{0,1\}^n.\]
Applying Lemma \ref{logsobcube}, since $\Pi^i_\rightarrow(x)=\Pi^{\sigma_i(x)}_\rightarrow(x)$,  the convexity property  \eqref{deplacebis}  with $C_t=\widetilde T_t$ given by Theorem \ref{thmcube} implies as $t$ goes 0
\begin{multline*}
H(\nu_0|\mu)\leq H(\nu_1|\mu) +\sum_{x\in\X} \sum_{i=1}^n   -D_i( \log f)(x)\, \Pi^i_\rightarrow(x) \,\nu_0(x)\\
-\frac12\int \sum_{i=1}^n h\left(\Pi^i_\rightarrow(x)\right) d\nu_0(x)- \frac12\int \sum_{i=1}^n h_{1}\left(\Pi^i_\leftarrow(y)\right) d\nu_1(y).
\end{multline*}
Choosing then $\nu_1=\mu$ it follows that 
\begin{multline}\label{grrr}
H(\nu_0|\mu)\leq  \sum_{x\in\X} \sum_{i=1}^n   -D_i( \log f)(x)\, \Pi^i_\rightarrow(x) \,\nu_0(x)\\
-\frac12\int \sum_{i=1}^n h\left(\Pi^i_\rightarrow(x)\right) d\nu_0(x)- \frac12\int \sum_{i=1}^n h_{1}\left(\Pi^i_\leftarrow(y)\right) d\mu(y).
\end{multline}

One may check that this inequality is optimal since for the two points space ($n=1$) this is an equality. The proof of this equality is left to the reader. It lies on the fact that since  $\widehat \pi$ is a $W_1$ optimizer, one has  $\pi(x,x)=\min(\nu_0(x),\mu(x))$ for $x=0$ and $x=1$.
From this remark,  starting from the tensorisation form of the one dimensional  convexity property with $C_t=\widetilde T_t$  given by Theorem \ref{tenso} with the  $ \pi^{(n)}$ be the Knothe-Rosenblatt coupling  of $\nu_0\in \Pb(\{0,1\}^n)$ and  $\nu_1=\mu$, one easily check that the same strategy as $t$ goes to 0 implies 
\[H(\nu_0|\mu)\leq  H\big(\nu_0^1\big|\mu_1\big)+ \int \sum_{i=2}^n H\big(\nu_0^i(\cdot|x_1,\ldots x_{i-1}\big|\mu_i\big) \nu_0(x),\]
which is still an equality due to the tensorisation property of entropy.
However, without using the tensorisation argument, we don't know if  \eqref{grrr} is an equality for dimension $n$ bigger than 2.

From  \eqref{grrr} in dimension $n$, using the identity 
\begin{eqnarray}\label{legendre}
\sup_{p\in[0,1)} \left\{  -D p -\frac12 \,h(p)\right\}=\frac12\,h^*(2D_-)= e^{-D_-}+D_--1,
\end{eqnarray}
and since 
\begin{align*}
&\frac12 \, h^*\left(2 [D_i( \log f)(x)]_-\right) f(x)\\
&= \big([D_if(x)]_- +f(\sigma_i(x))\big)[D_i(\log f)(x)]_--[D_if(x)]_-
\\& \leq  [D_i(\log f)(x)]_-[D_if(x)]_-,
\end{align*}
one gets the following modified logarithmic Sobolev inequality,
\begin{align}\label{logsob}
H(f\mu|\mu)&\leq \int \sum_{i=1}^n \frac12 \, h^*\left(2 [D_i( \log f)]_-\right) \nu_0(x) -\frac12 \sum_{i=1}^n \int  h_{1}\left(\Pi^i_\leftarrow(y)\right) d\mu(y)
\nonumber\\
&\leq  \sum_{i=1}^n \int \frac12 \, h^*\left(2 [D_i( \log f)]_-\right) d\nu_0\\
&\leq  \int \sum_{i=1}^n  [D_i(\log f)]_-[D_if]_- d\mu.\nonumber
\end{align}
Since $ h^*\left(2 [D_i( \log f)]_-\right)\leq   [D_i( \log f)]_-^2$, one recovers the reinforced modified  logarithmic Sobolev inequality of Corollary 5.5 in \cite{GRST14}. By means of the Central Limit Theorem, this reinforced modified log-Sobolev inequality actually leads to the usual logarithmic Sobolev inequality of Gross [18] for the standard Gaussian, with the optimal constant (see \cite[Corollary 5.5]{GRST14}).  

A simple way to   improve the modified inequality \eqref{logsob} is to take into account  the extra term involving $h_{1}\left(\Pi^i_\leftarrow(y)\right)$ in \eqref{grrr}. Given $x_i\in\{0,1\}$ and for $j\in[n]\setminus \{i\}$ given $z_j\in \{0,1\}$, let us introduce the notations
\begin{equation*}
\qquad z_{\bar \imath} x_i:= (z_1,\ldots, z_{i-1},x_i,z_{i+1},\ldots z_n)\in \{0,1\}^n, \quad \mbox{
and}\quad 
z_{\bar{\imath}}:= (z_1,\ldots, z_{i-1},z_{i+1},\ldots z_n)\in \{0,1\}^{n-1}.
\end{equation*}
Applying Jensen's inequality, the convexity property of the function $h_1$ provides 
\[\int  h_{1}\left(\Pi^i_\leftarrow(y)\right) d\mu(y)\geq 
\int h_1\left(1-\frac{\sum_{z_{\bar\imath}\in \{0,1\}^{n-1} }  \sum_{w_{\bar \imath}\in \{0,1\}^{n-1} } \widehat \pi(z_{\bar\imath} {y}_i,w_{\bar \imath}y_i)}{\mu_i(y_i)}\right) d\mu_i(y_i),\]
By setting $\mu_{\bar \imath}=\otimes_{j\in[n]\setminus\{i\}} \mu_i$ and since 
\[\sum_{z_{\bar\imath}\in \{0,1\}^{n-1} }\sum_{w_{\bar \imath}\in \{0,1\}^{n-1} } \widehat \pi(z_{\bar\imath}{y}_i,w_{\bar \imath}y_i)\leq \sum_{w\in \{0,1\}^{n} } \widehat \pi(z_{\bar\imath}{y}_i,w)=\sum_{z_{\bar\imath}\in \{0,1\}^{n-1} } f(z_{\bar\imath}{y}_i)   \mu_{\bar \imath}(z_{\bar\imath})\mu_i({y}_i),\]
and 
\[\sum_{z_{\bar\imath}\in \{0,1\}^{n-1} }\sum_{w_{\bar \imath}\in \{0,1\}^{n-1} } \widehat \pi(z_{\bar\imath}{y}_i,w_{\bar \imath}y_i)\leq \sum_{z\in \{0,1\}^{n} }\sum_{w_{\bar \imath}\in \{0,1\}^{n-1} } \widehat \pi(z,w_{\bar \imath}y_i)=\mu_i({y}_i),\]
 it follows that 
\begin{equation*}
 \int  h_{1}\left(\Pi^i_\leftarrow(y)\right) d\mu(y)
\geq
\int h_1\left(1-\min\Big\{1, {\sum_{z_{\bar\imath}\in \{0,1\}^{n-1} } f(z_{\bar\imath}{y}_i)   \mu_{\bar \imath}(z_{\bar\imath})}\Big\}\right) \,d\mu_i(y_i).
\end{equation*}
For any fixed $y_i\in\{0,1\}$, one has
\begin{equation*}1-\min\Big\{1, {\sum_{z_{\bar\imath}\in \{0,1\}^{n-1} } f(z_{\bar\imath}{y}_i) \mu_{\bar \imath}(z_{\bar\imath})}\Big\}=\left[\int f(z) -f(z_{\bar\imath}{y}_i) \,d \mu(z)\right]_+=\left[\int D_i f(y) \,d \mu_{\bar \imath}(y_{\bar\imath})\right]_+\mu_i(\overline y_i).
\end{equation*}
As a consequence \eqref{logsob} provides the following  new modified logarithmic  Sobolev inequality on the discrete hypercube,  
\begin{equation*}
H(f\mu|\mu)\leq  \sum_{i=1}^n \int \frac12 \, h^*\left(2 [D_i( \log f)]_-\right) d\nu_0 - \sum_{i=1}^n \frac12 \int h_1\left(\left[\int D_i f(y) \,d \mu_{\bar \imath}(y_{\bar\imath})\right]_+\mu_i(\overline y_i)\right) \,d\mu_i(y_i).
\end{equation*}
 
 As we will show in a forthcoming paper, the last strategy also simply provides modified logarithmic  Sobolev inequalities for probability measures $\mu_V=e^{-V}\mu$ on $\{0,1\}^n$  with interaction potentials  $V:\{0,1\}^n\to \R$, that can not be easily derived from tensorization property arguments.  \end{enumerate}
\end{comments}

 \subsection{The circle $\Z/N\Z$ endowed with a uniform measure}
Let $N\in \N$ and $\X$ be the space $\Z/N\Z$, endowed with the uniform probability   measure $\mu$, $\mu(x)=1/N$. The measure $\mu$ is reversible with respect to the generator $L$ given by , 
\[L(z,z+1)=L(z,z-1)=1, \qquad L(z,z)=-2,\]
for any $z\in \Z/N\Z$.
One always have $d(x,y)\leq \lfloor N/2\rfloor=n$ where $ \lfloor \cdot \rfloor$ denotes the floor function.  

If $N$ is odd then for any $x,y\in \Z/N\Z$, $L^{d(x,y)}(x,y)=1$ and therefore the Schr\"odinger bridge at zero temperature $(\widehat Q_t)_{t\in[0,1]}$ joining two probability measures $\nu_0$ and $\nu_1$ on $\Z/N\Z$  is given by \eqref{limitbridge}, with according to \eqref{defnut0} 
\[{Q_t}\!^{ x,y}(z)=
\1_{z\in[x,y]}\,\B_t^{d(x,y)}\big(d(x,z)\big).\]

If $N$ is even  then for any $x,y\in \Z/N\Z$ such that $d(x,y)<N/2$, $L^{d(x,y)}(x,y)=1$  and $L^{d(x,x+n)}(x,x+n)=2$. The Schr\"odinger bridge at zero temperature $(\widehat Q_t)_{t\in[0,1]}$   is given by \eqref{limitbridge}, with according to \eqref{defnut0} :  if $d(x,y)< N/2$ then 
\[{Q_t}\!^{ x,y}(z)=
\1_{z\in[x,y]}\,\B_t^{d(x,y)}\big(d(x,z)\big),\]
and if $d(x,y)= N/2$ ($y=x+n$),  for any  $z\in \Z/N\Z\setminus\{x,x+n\}$,
\[{Q_t}\!^{x,x+n}(z)=
\frac12\1_{z\in[x,x+n]}\,\B_t^{d(x,x+n)}\big(d(x,z)\big),\]
and ${Q_t}\!^{x,x+n}(x)=\,(1-t)^{d(x,x+n)}$, ${Q_t}\!^{x,x+n}(x+n)=\,t^{d(x,x+n)}$.
\begin{theorem}\label{thmcircle} On the space $(\Z/N\Z,\mu,d,L)$, the relative entropy $H(\cdot|\mu)$ satisfies the 0-displacement convexity \eqref{deplacebis}.
\end{theorem}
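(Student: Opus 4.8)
The plan is to deduce the statement from the key Theorem~\ref{limdevseconde} (together with Lemma~\ref{lemconvex}), exactly along the lines of the proof of flatness of $\Z^n$ in Theorem~\ref{thmZ}. All the hypotheses of Section~1 are trivially met here: $\X=\Z/N\Z$ is finite, $\mu$ is uniform (hence bounded and bounded below), $L$ is uniformly bounded with $|L(z,z)|=2$ and $L(z,z')=1$ for $z\sim z'$, condition \eqref{hypL} holds, and \eqref{convhyp} is automatic. So the Schr\"odinger bridge at zero temperature $(\widehat Q_t^0)_{t\in[0,1]}$ together with the descriptions \eqref{limitbridge} and \eqref{defnut0} are available, and Theorem~\ref{limdevseconde} reduces \eqref{deplacebis} to checking that the curvature cost it produces is nonnegative. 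That cost is obtained by computing, at positive temperature $\gamma>0$, the second derivative of $t\mapsto H(\widehat Q_t^\gamma|\mu)$ through a $\Gamma_2$-type identity and then letting $\gamma\to 0$; since $\mu$ is uniform and $L$ is the nearest-neighbour generator, these computations are formally identical to those on $\Z$.

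First I would treat $N$ odd. Then $L^{d(x,y)}(x,y)=1$ for every pair, each $G(x,y)$ consists of a single geodesic which, as an arc of the cycle of length $d(x,y)\le\lfloor N/2\rfloor$, is isometric to the integer interval $\{0,\dots,d(x,y)\}$, and by \eqref{defnut0} one has ${\nu_t^0}^{x,y}(z)=\1_{z\in[x,y]}\,\B_t^{d(x,y)}(d(x,z))$, i.e. the image under this isometric parametrization of the one-dimensional binomial interpolation path appearing in the proof of Theorem~\ref{thmZ}. Consequently every term entering the curvature cost of Theorem~\ref{limdevseconde} is, vertex by vertex, the same local expression in the weights $\B_t^{d}$ as on $\Z$, and the pointwise nonnegativity established in the proof of Theorem~\ref{thmZ} applies verbatim. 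The one genuinely global feature of the circle, that distances wrap around, is invisible here because the curvature cost is a sum of contributions supported on individual geodesics, each isometrically embedded into $\Z$; the $0$-displacement convexity then follows from Lemma~\ref{lemconvex}.

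For $N$ even the only new point is the antipodal pairs. Pairs $x,y$ with $d(x,y)<N/2$ are handled exactly as above. For $y=x+n$ with $n=N/2$ one has $L^{d(x,x+n)}(x,x+n)=2$, there are precisely two geodesics — the two half-circles — meeting only at $x$ and $y$, and by \eqref{defnut0} the bridge ${\nu_t^0}^{x,x+n}$ is the corresponding mixture, with mass $\frac12\B_t^{n}(d(x,z))$ on each interior vertex of the two arcs and masses $(1-t)^{n}$ and $t^{n}$ at $x$ and $y$. Away from $\{x,x+n\}$ each of the two arcs is again isometric to an interval of $\Z$ and contributes a nonnegative local term just as before; at the two branch vertices $x$ and $x+n$ one must check directly, by inserting these explicit values into the expression furnished by Theorem~\ref{limdevseconde} and collecting terms, that the contributions of the two geodesics emanating from a common vertex still sum to something nonnegative — equivalently, that the extra multiplicity in the factor $r$ coming from $L^{d(x,y)}(x,y)=2$ can only help. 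I expect this to reduce to an elementary inequality for the binomial weights $\B_t^{n}$, with the factor $\frac12$ and the boundary masses $(1-t)^n,t^n$ combining as in \eqref{defnut0}.

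The main obstacle is precisely this antipodal analysis when $N$ is even: unlike $\Z^n$, the circle fails to be locally tree-like exactly at an antipodal pair, so the clean principle ``every geodesic embeds isometrically into $\Z$, hence everything is local and flat'' breaks down at the two branch vertices, and one has to control the interaction there of the two half-circle geodesics by hand. For $N$ odd no such issue arises, and Theorem~\ref{thmcircle} is essentially a corollary of the computations behind Theorem~\ref{thmZ}.
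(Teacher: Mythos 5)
Your overall strategy (Theorem~\ref{limdevseconde} plus Lemma~\ref{lemconvex}, reducing the statement to nonnegativity of the limiting second derivatives) is the right one and is the paper's, but as written your proposal has a genuine gap: the case $N$ even is not proved. You explicitly defer the ``antipodal analysis'' at the branch vertices (``I expect this to reduce to an elementary inequality for the binomial weights''), so for even $N$ nothing is actually established. Moreover, the reason you run into this apparent obstacle is a mischaracterization of the curvature cost: the lower bound in Theorem~\ref{limdevseconde} is \emph{not} a sum of contributions carried by individual geodesics isometrically embedded in $\Z$. The quantities $A_t(z,z\pm1)$ and ${\mathbbm A}_t(z,z\pm2)$ are ratios such as ${\mathrm a}_t(z,\tz,y)/a_t(z,y)$, which aggregate over all endpoints $w$ charged by $\widehat\pi^0_{\leftarrow}(\cdot|y)$, hence over many different geodesics through $z$, and the square $\big(\sum_{\tz}A_t(z,\tz)L(z,\tz)\big)^2$ couples the two directions at $z$. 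So ``pointwise nonnegativity along each geodesic, as in Theorem~\ref{thmZ}'' is not literally what the $\Z^n$ proof provides, and it is precisely this geodesic-by-geodesic picture that makes the antipodal pairs look problematic.

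The fix --- and this is how the paper argues --- is that no parity distinction and no branch-vertex analysis are needed at all. On $\Z/N\Z$ the only terms appearing at a vertex $z$ in the lower bound of Theorem~\ref{limdevseconde} are $\big(A_t(z,z+1)+A_t(z,z-1)\big)^2$ together with $\rho\big(A_t(z,z+1),{\mathbbm A}_t(z,z+2)\big)+\rho\big(A_t(z,z-1),{\mathbbm A}_t(z,z-2)\big)$ (all rates equal $1$), and by \eqref{infbG}, i.e.\ $\rho(a,b)\geq-a^2$, this is bounded below by $2A_t(z,z+1)A_t(z,z-1)\geq 0$ at every $z$, whatever these limiting values are and regardless of whether one or two geodesics join the coupled endpoints; the same bound gives $\liminf\psi''_{\gamma_k}(t)\geq 0$, and Lemma~\ref{lemconvex} with constant $\varphi_0,\psi_0$ concludes. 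This is exactly the one-dimensional instance of the algebraic computation in the proof of Theorem~\ref{thmZ}, and it is parity-blind because it only uses the two-step neighbourhood structure of the cycle, not uniqueness of geodesics. (The paper additionally observes, using the directionality of the $W_1$-optimal coupling relative to $y$ and its antipode, that for fixed $y$ the forward and backward directions at $z$ cannot both be active, which removes the cross term; this refinement is not needed for nonnegativity.) So your odd-$N$ conclusion is correct, though for a slightly different reason than the one you give, while your even-$N$ ``main obstacle'' dissolves once the bound is read vertex-wise; as it stands, however, the even case remains unproven in your proposal.
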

Therefore the space $(\Z/N\Z,d,\mu,L)$ has non-negative $W_1$ or $T_2 $ entropic curvature. 

\subsection{The Bernoulli-Laplace model} Let $\X=\X_\kappa$ denotes the slice of the discrete hypercube $\{0,1\}^n$ of order $k\in[n-1]$, endowed with the uniform probability   measure $\mu$, namely
\[ \X_\kappa:=\left\{x=(x_1,\ldots,x_n)\in \{0,1\}\,\big| \,x_1+\ldots +x_n=\kappa\right\}.\]
For $z\in \X_\kappa$, let $J_0(z):=\{i\in [n]\,|\, z_i=0\}$ and $J_1(z):=\{i\in [n]\,|\, z_i=1\}$.
For  any $i\in J_0(z)$ and $j\in J_1(z)$,  one denotes  $\sigma_{ij}(z)$  the neighbour of $z$ in $\X_\kappa$  defined by
\[\left(\sigma_{ij}(z)\right)_i=1,\quad\left(\sigma_{ij}(z)\right)_j=0,\]
and for any $\ell\in[n]\setminus\{i,j\}$, $\left(\sigma_{ij}(z)\right)_\ell=z_\ell$.
The set of edges of the graph is 
\[E:=\Big\{(z,\sigma_{ij}(z))\,\big|\, z\in \X_\kappa, \{i,j\}\subset[n], z_i=0, z_j=1\Big\},\]
and the  
graph distance  is given by
\[d(x,y):=\frac12\sum_{i=1}^n \1_{x_i\neq y_i},\qquad x,y\in \X_\kappa.\] 

The measure $\mu$ is reversible with respect to the generator  $L$ given by  
$L(z,\sigma_{ij}(z)):=1$   for any $i,j$  such that $z_i=0$  and  $z_j=1$, 
and $L(z,z):=-\kappa(n-\kappa)$.

Since $L^{d(x,y)}(x,y)= (d(x,y)!)^2$, the Schr\"odinger bridge at zero temperature $(\widehat Q_t)_{t\in[0,1]}$ is given by  \eqref{limitbridge}, with according to \eqref{defnut0},
\begin{eqnarray}\label{nutsectioncube}
  {Q_t}\!^{ x,y}(z)=  \1_{[x,y]}(z) \,\binom{d(x,y)}{d(x,z)} ^{-1}\;t^{d(x,z)} (1-t)^{d(z,y)},\qquad z\in\X_\kappa.
 \end{eqnarray}
\begin{theorem}\label{thmslicecube}   
On the space $(\X_\kappa,\mu,d,L)$, the relative entropy $H(\cdot|\mu)$ satisfies the $C$-displacement convexity property \eqref{deplacebis}, with $C=(C_t)_{t\in (0,1)}$ defined by: for any $\nu_0,\nu_1\in{\mathcal P}(\X_\kappa)$ with associated  limit  Schr\"odinger problem optimizer $\widehat Q\in \Pc(\Omega)$,
\begin{equation*} 
C_t(\widehat \pi):=\max \Big[ \frac4{\min[\kappa,n-\kappa]} W_1^2(\nu_0,\nu_1),\,\frac4{\min[\kappa,n-\kappa]}  T_{c_2}(\widehat \pi) ,  \widetilde T_t(\widehat \pi)\Big],
\end{equation*}
where $ \widehat\pi=\widehat Q_{0,1}$,
the cost function $c_2$ of $T_{c_2}$  is the same as in Theorem \ref{thmcube}, and  
the cost $\widetilde T_t$ is defined by 
\begin{multline*}
\widetilde T_t(\widehat \pi):=\int \max\Big[\sum_{i\in J_0(x)}  \,h_t\Big(  \Pi^i_\rightarrow(x)\Big),\sum_{j\in J_1(x)} \,h_t\Big(\Pi^j_\rightarrow(x)\Big) \Big]\,d\nu_0(x)\\
+\int  \max\Big[\sum_{i\in J_0(y)} h_{1-t}\Big(\Pi^i_\leftarrow(y) \Big), \sum_{j\in J_1(y)}   h_{1-t}\Big(\Pi^j_\leftarrow(y) \Big)\Big] \,d\nu_1(y),
\end{multline*}
with the same 
 definitions for  the   functions $h_t$, $t\in(0,1)$ and the quantities $\Pi^i_\rightarrow(x)$ and $\Pi^i_\leftarrow(y)$ as  in Theorem \ref{thmcube}. 
 \end{theorem}

\begin{comments}
\begin{enumerate}[label*=(\alph*)]
\item Since   
$C_t(\widehat \pi)\geq \frac{4}{\min(\kappa,n-\kappa)} \,W_1^2(\nu_0,\nu_1)$,  the $W_1$-entropic curvature of the space $(\X_\kappa,d,L)$ is bounded from below by $\frac4{\min(\kappa,n-\kappa)}$. Observe that this constant is optimal  for $\kappa=1$ or $\kappa=n-1$, since $\X_\kappa$ is the complete graph and one recovers its optimal lower curvature bound $4$ (see Comment \ref{commentcomplet} of Theorem \ref{thmcomplete}). 

In the paper \cite[Theorem 1.1]{EMT15} the Erbar-Maas entropic curvature of the Bernoulli Laplace model along $\mathcal{ W}_2$-geodesics is bounded from below  by $\frac{n+2}{2\kappa(n-\kappa)}$, therefore their curvature term is of order 
\[\frac{n+2}{2\kappa(n-\kappa)} \mathcal{ W}_2^2(\nu_0,\nu_1)\geq \frac{n+2}{\kappa(n-\kappa)} { W}_1^2(\nu_0,\nu_1).\] 
Theorem \ref{thmslicecube} a slightly better constant as regards to the $W_1$-curvature term since $ \frac{n+2}{\kappa(n-\kappa)}\leq \frac4{\min[\kappa,n-\kappa]}$ with equality for $(\kappa,n)=(1,2)$.

\item Since  $C_t(\widehat \pi)\geq \frac{4}{\min(\kappa,n-\kappa)} \,T_{c_2}\big(\widehat \pi\big)$,  the $T_2 $-entropic curvature of the space $(\X_\kappa,d,L)$ is bounded from below by $\frac4{\min(\kappa,n-\kappa)}$.
Moreover, applying  
Theorem \ref{prek}, this lower bound  provides a new type of curved Pr\'ekopa-Leindler inequality on the slices of the discrete hypercube.

\item According to  the definition \eqref{t2tilde} of $\widetilde T_2(\nu_0,\nu_1)$, as in the case of  the hypercube, one has 
$C_t(\widehat \pi)\geq \widetilde T_t(\widehat \pi)\geq \frac 12   \widetilde T_2(\nu_0,\nu_1)$.
As a consequence, since $H(\widehat Q_t|\mu)\geq 0$, optimizing over all $t\in(0,1)$, Theorem \ref{thmslicecube} implies the following weak transport-entropy inequality, for any  $\nu_0,\nu_1\in \Pc(\X_\kappa)$,
\[\frac14 \,\widetilde T_2(\nu_0,\nu_1)\leq \left(\sqrt{ H(\nu_0|\mu)} +\sqrt{ H(\nu_1|\mu)}\right)^2.\]
This inequality is a reinforced symmetric version of a transport entropy inequality  given  in \cite[Theorem 1.8 (b)]{Sam17} with the worse constant $1/8$ instead of $1/4$. It was surprisingly obtained by projection of a  transport-entropy inequality for the uniform measure on the symmetric group. The   approach of the present paper  is much more natural to reach such a result. 

\item From the lower bound $C_t(\widehat \pi)\geq \widetilde T_t(\widehat \pi)$, Theorem \ref{thmslicecube} also yields a  modified logarithmic Sobolev.  
For any real function $g$ on $\X_\kappa$, let us note \[ D_{ij}g(x):=g(\sigma_{ij}(x))-g(x), \qquad x\in \X_\kappa, \quad (i,j)\in J_0(x)\times J_1(x) .\]
Assume $\nu_0$ has positive density $f$ with respect to $\mu$ and let us choose $\nu_1=\mu$. According to Lemma \ref{logsobcube}, setting $\Pi^{ij}_\rightarrow(x)=\Pi^{\sigma_{ij}(x)}_\rightarrow(x)$,  the convexity property  \eqref{deplacebis}  with $C_t=\widetilde T_t$ given by Theorem \ref{thmslicecube} implies as $t$ goes to 0
\begin{multline*}
H(\nu_0|\mu)\leq \sum_{x\in\X} \sum_{(i,j)\in J_0(x)\times J_1(x)}   -D_{ij}( \log f)(x)\, \Pi^{ij}_\rightarrow(x) \,\nu_0(x)\\
-\frac12\int \max\Big[\sum_{i\in J_0(x)}  \,h\Big(  \Pi^i_\rightarrow(x)\Big),\sum_{j\in J_1(x)} \,h\Big(\Pi^j_\rightarrow(x)\Big) \Big]\,d\nu_0(x).
\end{multline*}
Now, let us observe that for $i\in J_0(x)$, one has
\begin{align*}
\sum_{j\in J_1(x)} \Pi^{ij}_\rightarrow(x) &=\int \sum_{j\in J_1(x)}\1_{\sigma_{ij}(x) \in [x,y]} d(x,y) \,r(x,\sigma_{ij}(x) ,\sigma_{ij}(x) ,y) \, d\widehat \pi_\rightarrow(y|x)\\
&= \int \sum_{j\in J_1(x)\cap J_1(y)} \1_{x_i\neq y_i} d(x,y) \,r(x,\sigma_{ij}(x) ,\sigma_{ij}(x) ,y) \, d\widehat \pi_\rightarrow(y|x)\\
&= \int \1_{x_i\neq y_i} \frac{d^2(x,y) \big((d(x,y)-1)!\big)^2}{ \big(d(x,y)!\big)^2 }  \, d\widehat \pi_\rightarrow(y|x)= \Pi^i_\rightarrow(x), 
\end{align*}
and similarly for $j\in J_1(x)$, one has $\sum_{i\in J_0(x)} \Pi^{ij}_\rightarrow(x) 
= \Pi^j_\rightarrow(x)$. 
It follows that 
\begin{align*}
H(f\mu|\mu)&\leq \int \min\Big[\sum_{i\in J_0(x)}   \left( \max_{j\in J_1(x)} [D_{ij}( \log f)(x)]_-\, \Pi^{i}_\rightarrow(x)-\frac12h\left(\Pi^j_\rightarrow(x) \right)\right),\\
 &\qquad\qquad\sum_{j\in J_1(x)}   \left( \max_{i\in J_0(x)} [D_{ij}( \log f)(x)]_-\, \Pi^{j}_\rightarrow(x)
-\frac12h\left(\Pi^j_\rightarrow(x) \right)\right)\Big] \,d\nu_0(x)
\end{align*}
Finally the identity \eqref{legendre} gives the following modified logarithmic inequality
\begin{align}\label{logsobslice}
H(f\mu|\mu)&\leq \int \min\Big[\sum_{i\in J_0}  \frac12 h^*\left(2\max_{j\in J_1} [D_{ij}( \log f)]_-\right),
\sum_{j\in J_1}  \frac12h^*\left(2\max_{i\in J_0} [D_{ij}( \log f)]_-\right)\Big] \,d\nu_0\nonumber\\
&\leq \int \min\Big[\sum_{i\in J_0} \max_{j\in J_1} \big([D_{ij}(\log f)]_-[D_{ij}f]_-\big), \sum_{j\in J_1} \max_{i\in J_0} \big([D_{ij}(\log f)]_-[D_{ij}f]_-\big)\Big] \,d\mu
\end{align}
From the lower bound $\frac{n+2}{2\kappa(n-\kappa)}$ of Erbar entropic curvature given in \cite[Theorem 1.1]{EMT15}, we know from \cite[Theorem 7.4]{EM12} that the following modified logarithmic Sobolev inequality holds 
\begin{equation}\label{EMT}
H(f\mu|\mu)\leq c_n\int \sum_{(i,j)\in J_0\times J_1} D_{ij}(\log f) D_{ij}f \,d\mu=2c_n \int \sum_{(i,j)\in J_0\times J_1} [D_{ij}(\log f)]_- [D_{ij}f]_-\,d\mu, 
\end{equation}
with $c_n={1}/{2(n+2)}$, and the best constant $c_n$ in this inequality  is known to be greater than $1/4n$ (see comments after \cite[Theorem 1.1]{EMT15}).
This inequality is stronger than \eqref{logsobslice}. Indeed, one has 
\begin{align*}
 &\frac{1}{(n+2)} \int \sum_{(i,j)\in J_0\times J_1} [D_{ij}(\log f)]_- [D_{ij}f]_-\,d\mu\\&\leq \frac{1}{(n+2)}\int \min\Big[\kappa \sum_{i\in J_0} \max_{j\in J_1} \big([D_{ij}(\log f)]_-[D_{ij}f]_-\big), (n-\kappa)\sum_{j\in J_1} \max_{i\in J_0} \big([D_{ij}(\log f)]_-[D_{ij}f]_-\big)\Big]\,d\mu\\
 &\leq \int \min\Big[ \sum_{i\in J_0} \max_{j\in J_1} \big([D_{ij}(\log f)]_-[D_{ij}f]_-\big), \sum_{j\in J_1} \max_{i\in J_0} \big([D_{ij}(\log f)]_-[D_{ij}f]_-\big)\Big]\,d\mu.
 \end{align*}
 Choosing  the function $f$ defined by $f(x):=\alpha(x_1+\beta)$, $x\in \X_\kappa$, where $\beta>0$ and $\alpha$ is a renormalisation constant, one may  check that the right-hand side and the left-hand side of this inequality are asymptotically equivalent as $n$ goes to infinity. However it remains a challenge to improve our strategy in order to recover \eqref{EMT}.   
\end{enumerate}
\end{comments}

%%%%%%%%%%%%%%%%%%%%%%%%%%%%%%%%
% Proof of the main results
%%%%%%%%%%%%%%%%%%%%%%%%%%%%%%

\section{Proof of the main results}\label{proof}
This section is divided into two parts. We first present general statements  to prove displacement convexity property \eqref{deplacebis} along Schr\"odinger bridges at zero temperature. 
Then we show how it applies    for each involved discrete space of the last part. 

\subsection{Strategy  of  proof,  general statements to get entropic curvature results}
In order to prove property \eqref{deplacebis}, we fix two probability measures $\nu_0$ and $\nu_1$ in $\Pc_b(\X)$ in this part. 
As in the paper  by G. Conforti \cite{Con18} in continuous setting,
the first step is to decompose the relative-entropy using the product structure given by \eqref{Qtstructure}: for any  $t\in [0,1]$, 
\[H(\widehat Q^\gamma_t|m)=\varphi_\gamma(t)+\psi_\gamma(t),\]
where
\[\varphi_\gamma(t):=\int \log (P^\gamma_t f^\gamma) P^\gamma_t f^\gamma \, P^\gamma_{1-t}g^\gamma dm \quad\mbox{and}\quad \psi_\gamma(t):=\int \log (P^\gamma_{1-t}g^\gamma) P^\gamma_{1-t}g^\gamma  \,P^\gamma_t f^\gamma dm.\]
As recalled below, it is known that the function $\varphi_\gamma$  is non-increasing  and the function  $\psi_\gamma$ is non-decreasing (see \cite[Theorem 6.4.2]{Leo17}). 

Then, the strategy is to analyse  the behaviour of the second order derivative $\varphi_\gamma''$ and $\psi_\gamma''$ as $\gamma$ goes to 0, in order to apply the next Lemma. For any $t\in (0,1)$ let $K_t:[0,1]\to \R_+$, be defined by 
\begin{equation}\label{noyau}
 K_t(u)= \frac{2u}t\1_{u\leq t}+ \frac{2(1-u)}{1-t}\1_{u\geq t}, \qquad u\in[0,1].
 \end{equation}
$K_t$ is a kernel function since $\int_0^1K_t(u)\, du=1$. 
\begin{lemma}\label{lemconvex} 
Assume that   hypothesis  \eqref{unifbounded0}, \eqref{unifbounded1}, \eqref{unifbounded2} and \eqref{convhyp} hold. 
Let $(\gamma_\ell )_{\ell\in \N} $ be a sequence of positive numbers that converges to 0. If for any $t\in (0,1)$ 
\begin{eqnarray}\label{condlemconvex}
\liminf_{\gamma_\ell \to 0} \varphi_{\gamma_\ell }''(t)+\liminf_{\gamma_\ell \to 0} \psi_{\gamma_\ell }''(t)\geq \xi''(t),
\end{eqnarray}
where $\xi$ is a  continuous functions on [0,1], twice differentiable on $(0,1)$, depending on the coupling $\widehat \pi$,
then the displacement convexity property   \eqref{deplacebis} holds with 
\begin{equation*}
C_t(\widehat \pi):=\int_0^1\xi''(u) K_t(u)\, du
=\frac2{t(1-t)}\Big[
(1-t)\xi(0)+t\xi(1)-\xi(t)\Big].
\end{equation*}
\end{lemma}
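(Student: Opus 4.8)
The plan is to prove \eqref{deplacebis} by passing to the limit $\gamma_k \to 0$ in a displacement-convexity inequality that holds at each fixed positive temperature. First I would establish the identity, valid for any $t \in (0,1)$ and any smooth enough function $\phi$ on $[0,1]$,
\[
(1-t)\,\phi(0) + t\,\phi(1) - \phi(t) \;=\; \frac{t(1-t)}{2} \int_0^1 \phi''(u)\, K_t(u)\, du,
\]
which is the integral (Green's function) representation of the second difference with kernel $K_t$ defined in \eqref{noyau}; one checks directly that $u \mapsto \tfrac{t(1-t)}{2} K_t(u)$ is the Green function of $-d^2/du^2$ on $[0,1]$ with Dirichlet conditions evaluated against a point mass at $t$, or equivalently one simply integrates by parts twice. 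Applying this with $\phi = \varphi_{\gamma_k}$ and $\phi = \psi_{\gamma_k}$ and adding, using $H(\widehat Q^{\gamma_k}_t \mid m) = \varphi_{\gamma_k}(t) + \psi_{\gamma_k}(t)$, gives
\[
(1-t) H(\nu_0 \mid m) + t\, H(\nu_1 \mid m) - H(\widehat Q^{\gamma_k}_t \mid m)
= \frac{t(1-t)}{2} \int_0^1 \bigl( \varphi_{\gamma_k}''(u) + \psi_{\gamma_k}''(u) \bigr) K_t(u)\, du,
\]
where I have used $\varphi_{\gamma_k}(0) + \psi_{\gamma_k}(0) = H(\nu_0 \mid m)$ and $\varphi_{\gamma_k}(1) + \psi_{\gamma_k}(1) = H(\nu_1 \mid m)$, which follow from \eqref{Qtstructure} at $t=0,1$ (where $\widehat Q^{\gamma_k}_0 = \nu_0$, $\widehat Q^{\gamma_k}_1 = \nu_1$).

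Next I would let $\gamma_k \to 0$ on both sides. On the left, the slowing-down results already quoted in the excerpt (weak convergence $\widehat Q^{\gamma_k} \to \widehat Q^0$, hence $\widehat Q^{\gamma_k}_t \to \widehat Q^0_t$) together with lower semicontinuity of relative entropy give
\[
\limsup_{\gamma_k \to 0}\Bigl[ (1-t) H(\nu_0 \mid m) + t H(\nu_1 \mid m) - H(\widehat Q^{\gamma_k}_t \mid m) \Bigr]
\le (1-t) H(\nu_0 \mid m) + t H(\nu_1 \mid m) - H(\widehat Q^0_t \mid m);
\]
since $\nu_0,\nu_1$ have finite support, $H(\nu_0\mid m), H(\nu_1\mid m)$ are finite and there is no ambiguity. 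On the right, since $K_t \ge 0$ I apply Fatou's lemma together with the hypothesis \eqref{condlemconvex}, $\liminf \varphi_{\gamma_k}''(u) \ge \varphi_0''(u)$ and $\liminf \psi_{\gamma_k}''(u) \ge \psi_0''(u)$ for each $u \in (0,1)$, to get
\[
\liminf_{\gamma_k \to 0} \int_0^1 \bigl( \varphi_{\gamma_k}'' + \psi_{\gamma_k}'' \bigr) K_t\, du
\;\ge\; \int_0^1 \bigl( \varphi_0'' + \psi_0'' \bigr) K_t\, du.
\]
Combining the two one-sided limits yields
\[
(1-t) H(\nu_0 \mid m) + t H(\nu_1 \mid m) - H(\widehat Q^0_t \mid m) \;\ge\; \frac{t(1-t)}{2} \int_0^1 \bigl( \varphi_0'' + \psi_0'' \bigr) K_t\, du,
\]
which is exactly \eqref{deplacebis} with the stated $C_t(\nu_0,\nu_1) = \int_0^1 (\varphi_0'' + \psi_0'') K_t\, du$; the second displayed formula for $C_t$ is then just the Green-function identity applied to $\varphi_0$ and $\psi_0$ in reverse.

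A few technical points need care. To apply Fatou I must know the integrands $\varphi_{\gamma_k}'' K_t$ are bounded below by an integrable function uniformly in $k$; the natural route is the monotonicity recalled before the lemma ($\varphi_\gamma$ non-increasing and convex-type behaviour, $\psi_\gamma$ non-decreasing) — more precisely one uses that $\varphi_\gamma$ is non-increasing so that a clean sign or lower bound on $\varphi_\gamma''$, or at least local integrability uniform in $k$, can be extracted; if only $\liminf$ control at each point is available, one applies Fatou in the form for sequences bounded below after adding a fixed large constant absorbed by $\int_0^1 K_t = 1$. I expect this integrability/uniform-lower-bound issue, i.e.\ justifying the exchange of $\liminf$ and $\int$, to be the main obstacle; it is presumably handled using the structural bounds on $\varphi_\gamma, \psi_\gamma$ coming from \eqref{Qtstructure} and the hypotheses \eqref{unifbounded0}--\eqref{convhyp}, which also guarantee the limit Schrödinger optimizer $\widehat Q^0$ exists and that $\varphi_0, \psi_0$ are continuous on $[0,1]$ and twice differentiable on $(0,1)$ as assumed. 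The remaining steps — the Green-function identity and the lower-semicontinuity argument on the left — are routine.
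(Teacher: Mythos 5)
Your skeleton is the same as the paper's: the Green-kernel identity $(1-t)\phi(0)+t\phi(1)-\phi(t)=\tfrac{t(1-t)}2\int_0^1\phi''K_t$, the observation that $\widehat Q^{\gamma_k}_0=\nu_0$, $\widehat Q^{\gamma_k}_1=\nu_1$ so the endpoint entropies do not depend on $k$, Fatou on the right-hand side using \eqref{condlemconvex}, and the same identity applied to $\varphi_0,\psi_0$ to rewrite $C_t$. But the two limit interchanges you defer are exactly where the content of the lemma lies, and as written both have gaps. For Fatou applied to $\int_0^1(\varphi''_{\gamma_k}+\psi''_{\gamma_k})K_t\,du$ you need a $k$-uniform integrable minorant on all of $(0,1)$; the bounds actually available (\eqref{majM}, \eqref{majR} integrated against $\widehat Q^\gamma_t$ with the tail estimate \eqref{majQt}) give a uniform lower bound for $\varphi''_\gamma$ only on $[\varepsilon,1]$ and for $\psi''_\gamma$ only on $[0,1-\varepsilon]$, with constants blowing up like $\varepsilon^{-2}$ at the excluded endpoint, so your fallback of ``adding a fixed large constant'' is precisely what is not available on $[0,1]$ (nor is the integrability of $\varphi''_{\gamma_k}K_t$ near $u=0$ addressed, which is needed even to write the identity on $[0,1]$). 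The paper's resolution is to run the whole argument on the reparametrized function $F^\varepsilon_\gamma(t)=H(\widehat Q^\gamma_{(1-\varepsilon)t+\varepsilon(1-t)}|m)$, apply Fatou on the shrunk interval, and only afterwards let $\varepsilon\to0$ (a second Fatou), using that $t\mapsto H(\widehat Q^0_t|m)$ is continuous on $[0,1]$ because $\widehat Q^0_t$ is supported in the fixed finite set $B$. Some such endpoint truncation is needed; it is not a routine detail.

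The second gap is on the left-hand side: lower semicontinuity of $H(\cdot|m)$ under weak convergence is not valid in this generality, because $m$ need not be a finite measure --- the counting measure on $\Z^n$ (Theorem \ref{thmZ}) is a central example. On a countable space with $m\equiv1$ one can have probability measures $q_k\to\delta_{x_0}$ weakly with $H(q_k|m)\to-\infty$ while $H(\delta_{x_0}|m)=0$, so the inequality $\liminf_k H(\widehat Q^{\gamma_k}_t|m)\ge H(\widehat Q^0_t|m)$ that your argument needs is not free. What rescues it is the uniform decay \eqref{majQt} of $\widehat Q^{\gamma_k}_t$ outside the finite set $B$, from which the paper deduces the genuine convergence $H(\widehat Q^{\gamma_k}_t|m)\to H(\widehat Q^0_t|m)$ (Lemma \ref{lemmetech} \ref{item9}), and this is exactly where hypotheses \eqref{unifbounded0} and \eqref{convhyp} enter the statement of the lemma. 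So the strategy is right, but to obtain \eqref{deplacebis} you must supply the quantitative estimates of Lemma \ref{lemmetech} \ref{item7} and \ref{item9} together with the $\varepsilon$-truncation device; without them the proposed proof does not go through.
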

Observe that if $\xi''=K$ is a constant function, then 
$C_t(\widehat \pi)=K.$
The proof of this lemma is postponed in Appendix B. 

In order to apply  Lemma \ref{lemconvex}, we need first to compute  $\varphi'_\gamma,\psi'_\gamma$ and $\varphi''_\gamma,\psi''_\gamma$ in a suitable form so as to get 
\eqref{condlemconvex}. 
For any real function $u$ on $\X$, we note
\[\nabla u (z,w)=u(w)-u(z), \quad z,w\in \X,\] 
and 
\[Lu(z):= \sum_{w\in\X}u(w)\,L(z,w)=\sum_{w, w\sim z}\nabla u (z,w)\, L(z,w).\]

The expressions of $\varphi_\gamma',\psi_\gamma'$ and $\varphi_\gamma'',\psi_\gamma''$ are given by the next lemmas. These expressions   can be found in L\'eonard's paper \cite[section 6.4]{Leo17} in a more general framework (for stationary non-reversible Markov processes). For 
completeness,  the proof of the next result is recalled in Appendix B.
\begin{lemma}\label{deriv1phi} For any  $t\in(0,1)$, one has
\[\varphi_\gamma'(t)=- \int \sum_{z',z'\sim z}\zeta(e^{\nabla F_t^\gamma(z,z')}) \,L^\gamma(z,z') \,d\widehat Q^\gamma_t(z),\]
and 
\[\psi_\gamma'(t)=\int \sum_{z',z'\sim z}\zeta(e^{\nabla G_t^\gamma(z,z')})\, L^\gamma(z,z') \,d\widehat Q^\gamma_t(z),\]
where 
$\zeta(s):=s\log s -s+1,  s> 0,$
and $G^\gamma_t$ and $F^\gamma_t$ are the so-called {\it Schr\"odinger potentials} according to 
L\'eonard's paper terminology \cite{Leo17},
\[ G_t^\gamma:=\log P_{1-t}^\gamma g^\gamma, \qquad \mbox{and} \qquad F_t^\gamma:=\log P_t^\gamma f^\gamma.\]
\end{lemma}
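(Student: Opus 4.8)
The plan is to differentiate $\varphi_\gamma$ and $\psi_\gamma$ directly, using the forward/backward Kolmogorov equations for the semigroup $(\P_t^\gamma)$ together with the reversibility of $m$, and then to rearrange the outcome into the announced ``$\zeta$ of a discrete gradient'' form. Write $a:=\P_t^\gamma f^\gamma$ and $b:=\P_{1-t}^\gamma g^\gamma$, so that $F_t^\gamma=\log a$, $G_t^\gamma=\log b$, and by \eqref{Qtstructure} one has $d\widehat Q^\gamma_t(z)=a(z)\,b(z)\,m(z)$, $\varphi_\gamma(t)=\sum_z (\log a)\,a\,b\,m$ and $\psi_\gamma(t)=\sum_z (\log b)\,b\,a\,m$. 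Since $\P_t^\gamma=e^{t\gamma L}$ by \eqref{formePt}, we have $\partial_t a=\gamma La$ and $\partial_t b=-\gamma Lb$; the interchange of $\partial_t$ with the (a priori infinite) summation over $z$, as well as all the manipulations below, are legitimate thanks to the uniform bounds \eqref{unifbounded0}--\eqref{unifbounded1}, the summability \eqref{convhyp}, and the finiteness of the supports of $f^\gamma,g^\gamma$.

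Differentiating $\varphi_\gamma(t)=\sum_z (\log a(z))\,a(z)\,b(z)\,m(z)$ by the product rule and substituting $\partial_t a=\gamma La$, $\partial_t b=-\gamma Lb$ gives
\[\varphi_\gamma'(t)=\gamma\sum_z\Big[(1+\log a)\,(La)\,b-(\log a)\,a\,(Lb)\Big]m.\]
In the second term I would move $L$ off $b$ and onto $\phi(a):=a\log a$ using the self-adjointness of $L$ in $L^2(m)$, which is precisely the reversibility $m(z)L(z,z')=m(z')L(z',z)$; this rewrites $\varphi_\gamma'(t)=\gamma\sum_z\big[(1+\log a)(La)-L(\phi(a))\big]\,b\,m$. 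Expanding $La(z)=\sum_{z'\sim z}(a(z')-a(z))L(z,z')$ and $L(\phi(a))(z)=\sum_{z'\sim z}(a(z')\log a(z')-a(z)\log a(z))L(z,z')$, the pointwise bracket simplifies edge by edge: the terms containing $\log a(z)$ cancel and the summand over each edge $(z,z')$ becomes $-a(z)\,L(z,z')\,\zeta\big(a(z')/a(z)\big)$ with $\zeta(s)=s\log s-s+1$. The factor $a(z)$ then combines with $b(z)m(z)$ to reconstitute $d\widehat Q^\gamma_t(z)$, while $a(z')/a(z)=e^{\nabla F_t^\gamma(z,z')}$ and $\gamma L(z,z')=L^\gamma(z,z')$ for $z'\sim z$; this is exactly the claimed expression for $\varphi_\gamma'$.

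The computation for $\psi_\gamma$ is the same with the roles of $a$ and $b$ exchanged, except that $\partial_t b=-\gamma Lb$ carries the opposite sign to $\partial_t a$, which is why $\psi_\gamma'$ comes out with a plus sign and $G_t^\gamma$ in place of $F_t^\gamma$. I expect the only genuinely delicate point to be justifying the differentiation under the summation sign and the summation-by-parts step when $\X$ is infinite, for which one relies on the a priori decay of $\P_t^\gamma f^\gamma$ and $\P_{1-t}^\gamma g^\gamma$ furnished by \eqref{convhyp} and \eqref{unifbounded1}; the algebraic identity that produces $\zeta$ is elementary once the bracket is fully expanded. As indicated in the text, one may alternatively simply invoke \cite[Section 6.4]{Leo17}, where these identities are established in a more general (stationary, not necessarily reversible) setting.
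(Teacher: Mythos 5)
Your proof is correct and follows essentially the same route as the paper: differentiate via the product rule and the backward equations $\partial_t \P_t^\gamma f^\gamma=\gamma L\P_t^\gamma f^\gamma$, $\partial_t \P_{1-t}^\gamma g^\gamma=-\gamma L\P_{1-t}^\gamma g^\gamma$, use reversibility of $m$ to move $L$ onto $F_t^\gamma\,\P_t^\gamma f^\gamma$ (your $\phi(a)$), and simplify edgewise to produce $\zeta$. The only packaging difference is that the paper proves a general identity \eqref{escala} for arbitrary $V_t$ (so it can be reused for the second derivatives in Lemma \ref{deriv2phibis}) and spells out the domination bounds justifying differentiation under the sum, which you correctly flag but do not detail.
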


Since $\zeta\geq 0$, the function $\varphi_\gamma$ is non-increasing  and the function $\psi_\gamma$ is non-decreasing.

\begin{lemma}\label{deriv2phibis} For any $a>0,b>0$, let 
\begin{equation*}
\rho(a,b):=\left(\log b-2\log a-1\right) b,
\end{equation*}
and let $\rho(a,b)=0$ if either $a=0$ or $b=0$.
 For any $t\in(0,1)$, one has
\begin{align*}
\varphi''_\gamma(t)&=  \int  \Big[\Big(  \sum_{\tz, \tz\sim z} e^{\nabla F_t^\gamma(z, \tz)} L^\gamma(z, \tz)\Big)^2
+\sum_{\tz, \tz\sim z} \left(1+\nabla F_t^\gamma(z, \tz)\right) \,e^{\nabla F_t^\gamma(z, \tz)}\Big(L^\gamma(z,z)-L^\gamma(\tz,\tz) \Big) \,L^\gamma(z, \tz) \\
&\quad \qquad + \sum_{ \tz, \ttz, z\sim \tz \sim \ttz } \rho\left(e^{\nabla F_t^\gamma(z, \tz)},e^{\nabla F_t^\gamma(z, \ttz)}\right) 
L^\gamma(z, \tz )L^\gamma(\tz,\ttz )\Big] \,d\widehat Q_t^\gamma(z), \\
\psi''_\gamma(t)&=  \int  \Big[\Big(  \sum_{\tz, \tz\sim z} e^{\nabla G_t^\gamma(z, \tz)} L^\gamma(z, \tz)\Big)^2
+ \sum_{\tz, \tz\sim z} \left(1+\nabla G_t^\gamma(z, \tz)\right) \,e^{\nabla G_t^\gamma(z, \tz)}\Big(L^\gamma(z,z)-L^\gamma(\tz,\tz) \Big) \,L^\gamma(z, \tz) \\
&\quad \qquad+ \sum_{ \tz, \ttz, z\sim \tz \sim \ttz } \rho\left(e^{\nabla G_t^\gamma(z, \tz)},e^{\nabla G_t^\gamma(z, \ttz)}\right) 
L^\gamma(z, \tz )L^\gamma(\tz,\ttz )\Big] \,d\widehat Q_t^\gamma(z).
\end{align*}
\end{lemma}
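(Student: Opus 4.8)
The plan is to derive both identities by differentiating a second time the first-order expressions of Lemma~\ref{deriv1phi}, using only the forward and backward heat equations for the Schr\"odinger potentials together with the reversibility of $m$ with respect to $L^\gamma$. Fix $\gamma>0$ and abbreviate $u_t:=P_t^\gamma f^\gamma=e^{F_t^\gamma}$ and $v_t:=P_{1-t}^\gamma g^\gamma=e^{G_t^\gamma}$, so that $\partial_t u_t=L^\gamma u_t$ and $\partial_t v_t=-L^\gamma v_t$ (see \eqref{formePt}), $d\widehat Q_t^\gamma=u_tv_t\,dm$ by \eqref{Qtstructure}, $L^\gamma$ is self-adjoint with respect to $m$ by reversibility, and $L^\gamma\phi(z)=\sum_{z'\sim z}\nabla\phi(z,z')\,L^\gamma(z,z')$ since $\sum_{w}L^\gamma(z,w)=0$. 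The first step is to record the first derivative in a form suited to a further differentiation: using $\partial_t(u_t\log u_t)=(1+\log u_t)L^\gamma u_t$, $\partial_t v_t=-L^\gamma v_t$ and self-adjointness, a one-line computation gives
\[\varphi_\gamma'(t)=\int A(u_t)\,v_t\,dm,\qquad \psi_\gamma'(t)=-\int A(v_t)\,u_t\,dm,\qquad A(w):=(1+\log w)\,L^\gamma w-L^\gamma(w\log w),\]
and a pointwise expansion of $A(w)$ through $L^\gamma\phi(z)=\sum_{z'\sim z}\nabla\phi(z,z')L^\gamma(z,z')$ collapses the bracket to $A(w)(z)=-w(z)\sum_{z'\sim z}\zeta\big(e^{\nabla(\log w)(z,z')}\big)L^\gamma(z,z')$; taking $w=u_t$ (so $\log w=F_t^\gamma$) and writing $v_t\,dm=u_t^{-1}\,d\widehat Q_t^\gamma$ recovers Lemma~\ref{deriv1phi}, so this reformulation costs nothing. (In fact this form is already visible inside the proof of Lemma~\ref{deriv1phi}.)

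Next I would differentiate $t\mapsto\int A(u_t)\,v_t\,dm$ once more. Applying $\partial_t u_t=L^\gamma u_t$, $\partial_t v_t=-L^\gamma v_t$, and moving the $L^\gamma$ in $\int A(u_t)\,L^\gamma v_t\,dm$ back onto $A(u_t)$ by self-adjointness, the potential $G_t^\gamma$ disappears entirely and one is left with the $G$-free closed form
\[\varphi_\gamma''(t)=\int\Big[DA(u_t)\big[L^\gamma u_t\big]-L^\gamma\big(A(u_t)\big)\Big]\,v_t\,dm,\qquad DA(w)[h]=\tfrac{h}{w}L^\gamma w+(1+\log w)L^\gamma h-L^\gamma\big((1+\log w)h\big).\]
Running the same computation with the roles of $u_t$ and $v_t$ exchanged and the sign of $\partial_t v_t$ taken into account produces the corresponding identity for $\psi_\gamma''$ with $u_t$ and $v_t$ swapped; since $u_tv_t\,m=\widehat Q_t^\gamma$ is symmetric in $u_t,v_t$, this is precisely why the announced $\psi_\gamma''$ is the announced $\varphi_\gamma''$ with $F_t^\gamma$ replaced by $G_t^\gamma$, and I would treat the two cases in one go.

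What remains is to expand the integrand of $\varphi_\gamma''(t)$ pointwise and identify it with the three summands of the statement. Concretely, I would write every $L^\gamma$ through $L^\gamma\phi(z)=\sum_{z'\sim z}\nabla\phi(z,z')L^\gamma(z,z')$ (so that the iterated operators appearing in $DA(u_t)[L^\gamma u_t]$ and in $L^\gamma(A(u_t))$ generate, besides single-edge sums, sums over configurations $z\sim z'\sim z''$), simplify the discrete Leibniz-type combinations $w\,L^\gamma\phi-L^\gamma(w\phi)$, divide the integrand by $u_t(z)$ to turn $v_t\,dm$ into $d\widehat Q_t^\gamma$, and substitute $u_t(z')/u_t(z)=e^{\nabla F_t^\gamma(z,z')}$ and $u_t(z'')/u_t(z)=e^{\nabla F_t^\gamma(z,z'')}$; the terms then regroup into the squared-drift term $\big(\sum_{z'\sim z}e^{\nabla F_t^\gamma(z,z')}L^\gamma(z,z')\big)^2$, the first-order term carrying the diagonal difference $L^\gamma(z,z)-L^\gamma(z',z')$, and the two-step term over $z\sim z'\sim z''$ with the kernel $\rho$. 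The main obstacle is exactly this last step: a fairly long but entirely mechanical bookkeeping of single-edge versus length-two sums, in which one checks that the pieces involving $L^\gamma(z,z)$ alone and the spurious cross terms recombine precisely into the stated shape. The analytic points are routine: differentiating term by term under the (possibly infinite) sums is licensed by the uniform bounds \eqref{unifbounded0}, \eqref{unifbounded1}, \eqref{unifbounded2}, \eqref{convhyp} on $m$ and $L$, and since $u_t,v_t>0$ on $\X$ for $t\in(0,1)$ by irreducibility, the convention $\rho(a,b)=0$ when $a=0$ or $b=0$ plays no role in this computation.
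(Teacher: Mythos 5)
Your proposal follows essentially the same route as the paper's Appendix~B proof: there, too, the second derivative is obtained by differentiating the first-derivative formula using the backward equations $\partial_t P_t^\gamma f^\gamma=L^\gamma P_t^\gamma f^\gamma$, $\partial_t P_{1-t}^\gamma g^\gamma=-L^\gamma P_{1-t}^\gamma g^\gamma$ and reversibility to move $L^\gamma$ off $g_t$ (the paper's identity \eqref{escala} applied to $V_t(z)=\sum_{\tz\sim z}\zeta\big(e^{\nabla F_t^\gamma(z,\tz)}\big)L^\gamma(z,\tz)$ is exactly your $\int\big[DA(u_t)[L^\gamma u_t]-L^\gamma(A(u_t))\big]v_t\,dm$ in different packaging), after which the pointwise regrouping via $\nabla F_t^\gamma(z,\tz)+\nabla F_t^\gamma(\tz,\ttz)=\nabla F_t^\gamma(z,\ttz)$ and $\sum_{\tw\sim z}L^\gamma(z,\tw)=-L^\gamma(z,z)$ yields the three stated terms, and $\psi''_\gamma$ follows by the same symmetry you invoke. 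The only point where you are lighter than the paper is the interchange of $\partial_t$ with the infinite sum over $\X$, which the paper justifies not directly from \eqref{unifbounded0}--\eqref{convhyp} but through the quantitative estimates \eqref{encadrement}, \eqref{encadrementbis} and \eqref{majQt} of Lemma \ref{lemmetech} (locally uniformly in $t$); since you point to the right hypotheses and the domination indeed follows from them, this is a matter of detail rather than a gap.
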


Let us now analyse  the behavior of $\varphi_\gamma''(t)$, $\psi_\gamma''(t)$ as  temperature $\gamma$ goes to zero.  
Recall first that for $t\in(0,1)$, the support of the Schr\"odinger bridge at zero temperature $\widehat  Q_t$ given by \eqref{suppQt} is independent of $t$.
For sake of simplicity, one denotes  \[\widehat Z:= \supp(\widehat  Q_t),\qquad t\in(0,1).\] 
  As a consequence, one  expects that 
the limit behavior of $\varphi_\gamma''(t)$, $\psi_\gamma''(t)$ is expressed in term of sums restricted to points of  $\widehat Z$. Let us define, for any $z\in \widehat Z$, \[V_{_\rightarrow}(z):=\Big\{\tz\in V(z)\,\Big|\, (z,\tz)\in C_\rightarrow\Big\} \quad\mbox{ and }\quad V_{_\leftarrow}(z):=\Big\{\tz\in V(z)\,\Big|\, (z,\tz)\in C_\leftarrow\Big\},\]
where 
\[C_{_\rightarrow}:=\Big\{(z,w)\in\X\times \X\,\Big|\,z\neq w, \exists (x,y)\in \supp(\widehat{\pi}), (z,w)\in [x,y]\Big\},\]
and 
\[C_{_\leftarrow}:=\Big\{(z,w)\in\X\times \X\,\Big|\, (w,z) \in C_{_\rightarrow} \Big\}.\] 
Similarly, one also defines 
\[\V_{_\rightarrow}(z):=\Big\{\ttz,\in \V(z)\,\Big|\, (z,\ttz)\in C_{_\rightarrow}\Big\} \quad\mbox{ and }\quad\V_{_\leftarrow}(z):=\Big\{\ttz,\in \V(z)\,\Big|\, (z,\ttz)\in C_{_\leftarrow}\Big\},\] where for any $z\in \X$
 \[\V(z):=\Big\{\ttz\in \X\,\Big|\, d(z,\ttz)=2\Big\}.\]
 As a remarkable fact, according to Lemma \ref{opt} postponed in Appendix A, from the $d$-cyclically monotone property of the $W_1$-optimal coupling $\widehat{\pi}$, $C_{_\rightarrow}$ and $C_{_\leftarrow}$ are disjoint sets. This implies that $V_{_\rightarrow}(z)$
 and $V_{_\leftarrow}(z)$ are disjoint, and also  $\V_{_\rightarrow}(z)$ and $\V_{_\leftarrow}(z)$, for any $z\in \widehat Z$.

According to the expression of $\varphi_\gamma''(t)$, $\psi_\gamma''(t)$ given in Lemma \ref{deriv2phibis}, a first step is to give the behavior as $\gamma$ goes to zero of the quantities 
\[A^\gamma_t(z,u):=e^{\nabla F_t^\gamma (z, u)}=\frac{P_t^\gamma f^\gamma(u)}{P_t^\gamma f^\gamma(z)}\quad\mbox{ and }\quad
B^\gamma_t(z,u):= e^{\nabla G_t^\gamma (z, u)}=\frac{P_{1-t}^\gamma g^\gamma(u)}{P_{1-t}^\gamma g^\gamma(z)},\]
for $u=\tz$ or $u=\ttz$ with $z\sim\tz\sim\ttz$. This is a key result of this paper. Let us briefly give the intuition behind it. From the Markov property, the quantity  $A^\gamma_t(z,u)$ can be interpreted as the mean ratio  of transition probabilities under conditional law of the Schr\"odinger bridge, namely 
\begin{equation}\label{encore}
A^\gamma_t(z,u)=\sum_{w\in\X} \frac{P_t^\gamma(u,w)}{ P_t^\gamma(z,w)} \, \, \frac{f^\gamma(w)P_t^\gamma(z,w)}{P_tf^\gamma(z)}=\sum_{w\in\X} \frac{P_t^\gamma(u,w)}{ P_t^\gamma(z,w)} \, \widehat Q^\gamma(X_0=w|X_t=z),
\end{equation}
where $\widehat Q^\gamma(X_0=w|X_t=z)$ is the law of $X_0$ given $X_t=z$ under the law $\widehat Q^\gamma$. As $\gamma$ goes to 0, the law $\widehat Q^\gamma$ tends to $\widehat Q$, and the behavior of the ratio is given by the Taylor expansion of $P_t^\gamma$ as $\gamma$ goes to 0, namely according to Lemma \ref{lemmetech} $(iii)$,
  \[\frac{P_t^\gamma(u,w)}{ P_t^\gamma(z,w)}=(t\gamma)^{d(u,w)-d(z,w)} \left(\frac{L^{d(u,w)}(u,w)d(z,w)!}{L^{d(z,w)}(z,w)d(u,w)!}\,+o(1)\right). \]
  Therefore, if $\gamma$ goes to 0 then the main contribution in the sum given by \eqref{encore} is for points $w\in\X$ such that $d(u,w)-d(z,w)$ has minimum value. This means that $u\in[z,w]$, so that $d(u,w)-d(z,w)=-d(u,z)$. It follows that for $u=\tz$ with  $\tz\sim z$,
  \[A^\gamma_t(z,\tz)\underset{\gamma\to 0}{\sim} \frac1{\gamma t}\sum_{w\in\X, \tz\in[z,w]}  \frac{L^{d(\tz,w)}(\tz,w)d(z,w)!}{L^{d(z,w)}(z,w)d(\tz,w)!} \, \widehat Q(X_0=w|X_t=z),\]
  and for $u=\ttz$ with $d(z,\ttz)=2$,
  \[A^\gamma_t(z,\ttz)\underset{\gamma\to 0}{\sim}   \frac1{\gamma^2t^2}  \sum_{w\in\X, \ttz\in[z,w]} \frac{L^{d(\ttz,w)}(\ttz,w)d(z,w)!}{L^{d(z,w)}(z,w)d(\ttz,w)!} \, \widehat Q(X_0=w|X_t=z).\]
 The quantity  $ B^\gamma_t(z,u)$ can be similarly analysed as $\gamma$ goes to 0.
 
Let us now formulate precise statements. One needs to define several quantities. For any $z\in \X$, $x \in \supp(\nu_0)$, $y\in \supp(\nu_1)$ and any $t\in(0,1)$, let   
 \begin{equation}\label{a_t}
 a_t(z,y) :=\widehat Q(X_t=z|X_1=y)=\int {Q_t}\!^{ w,y}(z) \,d\widehat{\pi}_{_\leftarrow}(w|y),
 \end{equation}
 and 
 \[b_t(z,x):=\widehat Q(X_t=z|X_0=x)=\int {Q_t}\!^{ x,w}(z)\,d\widehat{\pi}_{_\rightarrow}(w|x).\]
 Observe that for $t\in(0,1)$, $a_t(z,y)>0$ if and only if $z\in \widehat Z$ and 
 $y\in \widehat Y_z$ with
 \[ \widehat Y_z:=\Big\{y\in\supp(\nu_1)\,\Big|\, \exists x\in \X,  (x,y)\in \widehat{\pi}, z\in[x,y]\Big\}.\]
 Identically $b_t(z,x)>0$ if and only if $z\in \widehat Z$ and 
 $x\in \widehat X_z$ with
 \[ \widehat X_z:=\Big\{x\in\supp(\nu_0)\,\Big|\, \exists x\in \X,  (x,y)\in \widehat{\pi}, z\in[x,y]\Big\}.\]
 For further use, for any $y\in \supp(\nu_1)$ and $x\in \supp(\nu_0)$, we also introduce the sets 
 \[\widehat Z^y:=\Big\{z\in \widehat Z\,\Big|\, y\in \widehat Y_z\Big\}\quad \mbox{ and } \quad\widehat Z_x:=\Big\{z\in \widehat Z\,\Big|\, x\in \widehat X_z\Big\},\]
 so that 
 \[\left(z\in \widehat Z, y\in  \widehat Y_z\right)\Leftrightarrow \left(y\in  \supp(\nu_1), z\in \widehat Z^y\right),\]
 and 
  \[\left(z\in \widehat Z, x\in  \widehat X_z\right)\Leftrightarrow \left(x\in  \supp(\nu_0), z\in \widehat Z_x\right).\]

 For any $z\in \widehat Z$,   $\tz\in V(z)$,  define 
\begin{equation}\label{a_t'}
{\mathrm a}_t(z,\tz,y):= \sum_{w\in \X, (z,\tz)\in[y,w]}  
r(y,z,\tz,w) \,d(y,w)\, \B_t^{d(y,w)-1}(d(z,w)-1)\,\widehat{\pi}_{_\leftarrow}(w|y),
\end{equation}
and 
\begin{equation*}
{\mathrm b}_t(z,\tz,x):=\sum_{w\in \X, (z,\tz)\in[x,w]}  
\,r(x,z,\tz,w) \,d(x,w)\, \B_t^{d(x,w)-1}(d(x,z))\,\widehat{\pi}_{_\rightarrow}(w|x),
\end{equation*}
where the function $r$ is given by \eqref{defr}.
One easily check that ${\mathrm a}_t(z,\tz,y)>0$ if and only if $\tz\in V_{_\leftarrow}(z)$ and 
$y\in \widehat Y_{(z,\tz)}$ with
 \[ \widehat Y_{(z,\tz)}=\Big\{y\in\supp(\nu_1)\,\Big|\, \exists x\in \X,  (x,y)\in \widehat{\pi}, (z,\tz)\in[y,x]\Big\}\subset \widehat Y_z\cap  \widehat Y_{\tz} ,\]
 and identically ${\mathrm b}_t(z,\tz,x)>0$ if and only if $\tz\in V_{_\rightarrow}(z)$ and 
$x\in \widehat X_{(z,\tz)}$ with
 \[ \widehat X_{(z,\tz)}=\Big\{x\in\supp(\nu_0)\,\Big|\, \exists y\in \X,  (x,y)\in \widehat{\pi}, (z,\tz)\in[x,y]\Big\}\subset \widehat X_z\cap  \widehat X_{\tz}.\]

For any $z\in \widehat Z$ and $\ttz\in \V(z)$, define also 
 \begin{equation}\label{a_t''}
{\mathbbm a}_t(z,\ttz,y):= \!\!\!\!\!\sum_{w\in \X, (z,\ttz)\in[y,w]}  
\!\!\!\!\!r(y,z,\ttz,w) \,d(y,w)(d(y,w)-1)\, \B_t^{d(y,w)-2}(d(z,w)-2)\,\widehat{\pi}_{_\leftarrow}(w|y),
\end{equation}
and 
\begin{eqnarray*}
 {\mathbbm b}_t(z,\ttz,x):=\!\!\!\!\!\sum_{w\in \X, (z,\ttz)\in[x,w]} \!\!\!\!\! 
r(x,z,\ttz,w) \,d(x,w)(d(x,w)-1)\, \B_t^{d(x,w)-2}(d(x,z))
\,\widehat{\pi}_{_\rightarrow}(w|x).
\end{eqnarray*}
We also have ${\mathbbm a}_t(z,\ttz,y)>0$ if and only if $\ttz\in \V_{_\leftarrow}(z)$ and 
$y\in \widehat Y_{(z,\ttz)}$, 
and ${\mathbbm b}_t(z,\ttz,x)>0$ if and only if $\ttz\in \V_{_\rightarrow}(z)$ and 
$x\in  \widehat X_{(z,\ttz)}$.
  
\begin{lemma}\label{lemintaibis} Assume that conditions \eqref{unifbounded1} and  \eqref{unifbounded2} 
are fulfilled. Let $(\gamma_\ell )_{\ell\in \N}$ be a sequence of positive numbers converging to 0, and let $\widehat Q_t$ denote the weak limit of the sequence of probability measures  $(\widehat Q_t^{\gamma_\ell })_{\ell\in \N}$.  Let $z\in \widehat Z$. 
\begin{itemize}
\item For any $\tz \in V(z)$, it holds
 \begin{eqnarray}\label{conv1}
 \lim_{\gamma_\ell\to 0}  \left(\gamma_\ell  A^{\gamma_\ell }_t(z,\tz)\right)={A_t(z,\tz)\geq 0}   \quad \mbox{and} \quad \lim_{\gamma_\ell\to 0}  \left(\gamma_\ell  B^{\gamma_\ell }_t(z,\tz)\right)={B_t(z,\tz)}\geq 0,
 \end{eqnarray}
 with $A_t(z,\tz)>0$ if and only if  $\tz\in V_{_\leftarrow}(z)$, and $B_t(z,\tz)>0$ if and only if  $\tz\in V_{_\rightarrow}(z)$. 
 Moreover, given $\tz\in V_{_\leftarrow}(z)$, for any $y\in  \widehat Y_z$  
 \[ \displaystyle A_t(z,\tz):=\frac{{\mathrm a}_t(z,\tz,y)}{a_t(z,y)},\]
 and given $\tz\in V_{_\rightarrow}(z)$, for any $x\in  \widehat X_z$  
\[B_t(z,\tz):=\frac{{\mathrm b}_t(z,\tz,x)}{b_t(z,x)}.\]
\item   For any $\ttz \in \V(z)$, it holds 
 \begin{equation}\label{conv2}
 \lim_{\gamma_\ell\to 0}  \left({\gamma_\ell }^2 A^{\gamma_\ell }_t(z,\ttz)\right)={{\mathbbm A}_t(z,\ttz)}\geq 0 \; \quad \mbox{and} \quad\; \lim_{\gamma_\ell\to 0}  \left({\gamma_\ell }^2 B^{\gamma_\ell }_t(z,\ttz)\right)={{\mathbbm B}_t(z,\ttz)}\geq 0,
 \end{equation}
with ${\mathbbm A}_t(z,\ttz)>0$ if and only if  $\ttz\in \V_{_\leftarrow}(z)$ and ${\mathbbm B}_t(z,\ttz)>0$ if and only if  $\ttz\in \V_{_\rightarrow}(z)$. 
Moreover, given $\ttz\in \V_{_\leftarrow}(z)$, for any $y\in  \widehat Y_z$
\[ \displaystyle {\mathbbm A}_t(z,\ttz):=\frac{{\mathbbm a}_t(z,\ttz,y)}{a_t(z,y)},\]
and given $\ttz\in \V_{_\rightarrow}(z)$, for any $x\in  \widehat X_z$  
\[{\mathbbm B}_t(z,\ttz):=\frac{{\mathbbm b}_t(z,\ttz,x)}{b_t(z,x)}.\]
\end{itemize}
\end{lemma}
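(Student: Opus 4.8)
The plan is to reduce the small-$\gamma$ behaviour of the ratios $A^\gamma_t$ and $B^\gamma_t$ to that of the pinned bridge marginals together with elementary asymptotics of the transition kernels $P^\gamma_s$. Since $\widehat Q^\gamma$ is Markov with density $f^\gamma(X_0)g^\gamma(X_1)$ with respect to $R^\gamma$, its three-time marginal is $\widehat Q^\gamma_{0,t,1}(x,z,y)=f^\gamma(x)g^\gamma(y)\,m(x)P^\gamma_t(x,z)P^\gamma_{1-t}(z,y)$. Summing over $x$, using reversibility of $m$ for $L^\gamma$ and the Schr\"odinger system \eqref{SS} (which gives $\nu_1(y)=m(y)g^\gamma(y)P^\gamma_1 f^\gamma(y)$), one obtains for $z\in\X$ and $y\in\supp(\nu_1)$
\[ a^\gamma_t(z,y):=\widehat Q^\gamma(X_t=z\mid X_1=y)=\frac{m(z)P^\gamma_{1-t}(z,y)\,P^\gamma_t f^\gamma(z)}{m(y)P^\gamma_1 f^\gamma(y)},\]
and symmetrically $b^\gamma_t(z,x)=P^\gamma_t(x,z)P^\gamma_{1-t}g^\gamma(z)/P^\gamma_1 g^\gamma(x)$ for $x\in\supp(\nu_0)$. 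At fixed $\gamma>0$ every factor above is strictly positive by irreducibility, so comparing these identities at $z$ and at a vertex $w$ gives, for any $y\in\supp(\nu_1)$ and $x\in\supp(\nu_0)$,
\[ A^\gamma_t(z,w)=\frac{a^\gamma_t(w,y)}{a^\gamma_t(z,y)}\cdot\frac{m(z)P^\gamma_{1-t}(z,y)}{m(w)P^\gamma_{1-t}(w,y)},\qquad B^\gamma_t(z,w)=\frac{b^\gamma_t(w,x)}{b^\gamma_t(z,x)}\cdot\frac{P^\gamma_t(x,z)}{P^\gamma_t(x,w)},\]
the left-hand sides not depending on $x$ or $y$.

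Two ingredients now enter. First, from $P^\gamma_s=e^{s\gamma L}=\sum_k\frac{(s\gamma)^k}{k!}L^k$ (by \eqref{unifbounded1}) together with \eqref{hypL} --- which forces $L^k(u,v)=0$ for $k<d(u,v)$ and $L^{d(u,v)}(u,v)>0$ --- Lemma \ref{lemmetech} gives, uniformly over $u,v$ in the finite set formed by $\widehat Z$, its neighbours and second neighbours, and $\supp(\nu_0)\cup\supp(\nu_1)$,
\[ P^\gamma_s(u,v)=\frac{(s\gamma)^{d(u,v)}}{d(u,v)!}\,L^{d(u,v)}(u,v)\,\bigl(1+o(1)\bigr),\qquad\gamma\to0.\]
Second, the narrow convergence $\widehat Q^{\gamma_k}\to\widehat Q^0$ and the finiteness of $\supp(\nu_0),\supp(\nu_1)$ give the pointwise convergence of finite-dimensional marginals, hence $a^{\gamma_k}_t(z,y)\to a_t(z,y)$ and $b^{\gamma_k}_t(z,x)\to b_t(z,x)$ with $a_t,b_t$ as in \eqref{a_t}; in particular $a_t(z,y)>0$ exactly when $z\in\widehat Z$ and $y\in\widehat Y_z$ (and symmetrically for $b_t$).

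Fix $z\in\widehat Z$ and $\tz\in V(z)$ and pick $y\in\widehat Y_z$, so $a^{\gamma_k}_t(z,y)\to a_t(z,y)>0$. As $\tz\sim z$, $d(y,\tz)=d(y,z)\pm1$. If $d(y,\tz)=d(y,z)-1$ the kernel estimate gives $\gamma_k\,m(z)P^{\gamma_k}_{1-t}(z,y)/(m(\tz)P^{\gamma_k}_{1-t}(\tz,y))=O(\gamma_k^2)\to0$, hence $\gamma_k A^{\gamma_k}_t(z,\tz)\to0$, consistently with $\mathrm{a}_t(z,\tz,y)=0$ (short triangle-inequality check). If $d(y,\tz)=d(y,z)+1$ the estimate gives $\gamma_k\,m(z)P^{\gamma_k}_{1-t}(z,y)/(m(\tz)P^{\gamma_k}_{1-t}(\tz,y))\to\frac{m(z)}{m(\tz)}\cdot\frac{(d(z,y)+1)L^{d(z,y)}(z,y)}{(1-t)L^{d(\tz,y)}(\tz,y)}$, so $\gamma_k A^{\gamma_k}_t(z,\tz)$ converges. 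A termwise comparison of the sums defining $\mathrm{a}_t(z,\tz,y)$ in \eqref{a_t'} and $a_t(\tz,y)$ --- using reversibility of $m$, the identity $\binom{D-1}{k}/\binom{D}{k}=(D-k)/D$ with $D=d(y,w)$, $k=d(z,w)-1$, and the fact that for $(w,y)\in\supp(\widehat\pi^0)$ one has $\tz\in[w,y]\Leftrightarrow(z,\tz)\in[y,w]$ (again triangle inequality, using $d(y,\tz)=d(y,z)+1$) --- identifies the limit with $\mathrm{a}_t(z,\tz,y)/a_t(z,y)$, positive iff $a_t(\tz,y)>0$, i.e. iff $\tz\in V_{_\leftarrow}(z)$. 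Since $A^{\gamma_k}_t(z,\tz)$ is independent of $y$, the resulting value $A_t(z,\tz)$ is the same for all $y\in\widehat Y_z$. The second-neighbour case is identical once one notes that for $\ttz\in\V(z)$ only $d(y,\ttz)=d(y,z)+2$ yields a nonzero limit and that $\binom{D-2}{k}/\binom{D}{k}=(D-k)(D-k-1)/(D(D-1))$ produces the factor $d(y,w)(d(y,w)-1)$ and the shift to $\B^{\,d(y,w)-2}_t$ in \eqref{a_t''}, giving $\gamma_k^2 A^{\gamma_k}_t(z,\ttz)\to\mathbbm{a}_t(z,\ttz,y)/a_t(z,y)$, positive iff $\ttz\in\V_{_\leftarrow}(z)$. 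The statements for $B^\gamma_t$ follow verbatim, exchanging $(\nu_0,x,b_t,V_{_\rightarrow},\V_{_\rightarrow})$ with $(\nu_1,y,a_t,V_{_\leftarrow},\V_{_\leftarrow})$ and using $P^\gamma_t(x,\cdot)$ in place of $P^\gamma_{1-t}(\cdot,y)$.

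The main obstacle is the combinatorial part of the last step: identifying the positivity sets of the limits with membership in $V_{_\leftarrow},V_{_\rightarrow},\V_{_\leftarrow},\V_{_\rightarrow}$, and checking that the resulting formulae do not depend on the chosen $x\in\widehat X_z$ or $y\in\widehat Y_z$ (equivalently, that $\widehat Y_z\subseteq\widehat Y_{(z,\tz)}$ whenever $\tz\in V_{_\leftarrow}(z)$, and similarly for the other sets). Both rely on the geodesic structure of the $W_1$-optimal coupling $\widehat\pi^0$ --- in particular its $d$-cyclical monotonicity (Lemma \ref{opt}), which also guarantees that $C_{_\rightarrow}$ and $C_{_\leftarrow}$ are disjoint.
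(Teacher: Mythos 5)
Your argument is correct in substance and rests on the same pillars as the paper's proof: the Schr\"odinger-system representation of the pinned marginals $a_t^\gamma,b_t^\gamma$, the kernel expansions of Lemma \ref{lemmetech}, the weak convergence of the optimal couplings, and---crucially---the fact that $A_t^{\gamma}(z,\tz)$ does not depend on the auxiliary endpoint $y$, which gives both the $y$-independence of the limit and the positivity characterization. The difference is bookkeeping: the paper writes directly $\gamma A_t^\gamma(z,\tz)=\gamma\,{\mathrm a}_t^\gamma(z,\tz,y)/a_t^\gamma(z,y)$ with ${\mathrm a}_t^\gamma(z,\tz,y)=\int P_{1-t}^\gamma(y,z)P_t^\gamma(\tz,w)/P_1^\gamma(y,w)\,d\widehat\pi^\gamma_{\leftarrow}(w|y)$, and passes to the limit separately in numerator and denominator, so that the limiting object \eqref{a_t'} appears without any re-indexing; you instead route through $a_t^\gamma(\tz,y)/a_t^\gamma(z,y)$ times an explicit kernel ratio and then must re-identify the limit with ${\mathrm a}_t(z,\tz,y)/a_t(z,y)$ by a termwise reversibility-and-binomial computation (your identities do check out, including for the second-neighbour case). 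Both routes work; the paper's avoids that final combinatorial step. Also, the convergence $a_t^{\gamma_k}(\cdot,y)\to a_t(\cdot,y)$ is better justified as in the paper, through the mixture formula $a_t^\gamma(z,y)=\int{\nu_t^\gamma}^{w,y}(z)\,d\widehat\pi^\gamma_{\leftarrow}(w|y)$, Lemma \ref{lemmetech} (iv) and the pointwise convergence of $\widehat\pi^{\gamma_k}_{\leftarrow}(\cdot|y)$ on the finite supports, rather than by invoking convergence of finite-dimensional marginals of $\widehat Q^{\gamma_k}$ (time projections are not continuous on the path space).

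Two points need repair, both minor. First, ``$d(y,\tz)=d(y,z)\pm1$'' is false on non-bipartite graphs: for the complete graph and the Bernoulli--Laplace model, both central examples of the paper, the case $d(y,\tz)=d(y,z)$ occurs. It is harmless---the kernel ratio is then $O(\gamma_k)$, the ratio of conditional marginals stays bounded, so the limit is $0$, consistent with ${\mathrm a}_t(z,\tz,y)=0$ since $(z,\tz)\in[y,w]$ forces $d(y,\tz)=d(y,z)+1$---but the case must be listed (and likewise the intermediate cases for $\ttz\in\V(z)$, which you do cover implicitly). Second, your closing appeal to $d$-cyclical monotonicity (Lemma \ref{opt}) for the positivity sets and for the independence of the chosen $y\in\widehat Y_z$ is unnecessary, and the paper does not use it here: since $A_t^{\gamma_k}(z,\tz)$ is independent of $y$, its limit equals ${\mathrm a}_t(z,\tz,y)/a_t(z,y)$ simultaneously for every $y\in\widehat Y_z$; if $\tz\in V_{\leftarrow}(z)$ then some $y'\in\widehat Y_{(z,\tz)}\subset\widehat Y_z$ gives ${\mathrm a}_t(z,\tz,y')>0$, so the common value is positive, and if $\tz\notin V_{\leftarrow}(z)$ then ${\mathrm a}_t(z,\tz,y)=0$ for all $y$ and the limit vanishes. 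In particular the inclusion you flag as an obstacle ($\widehat Y_z\subset\widehat Y_{(z,\tz)}$ when $\tz\in V_{\leftarrow}(z)$) is a consequence of this argument, not an input. The same remarks apply verbatim to the $\V$-quantities and to $B_t$, ${\mathbbm B}_t$.
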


Lemma \ref{lemintaibis}  provides the following  Taylor estimates for the functions $\varphi''_{\gamma_\ell }$ and $\psi''_{\gamma_\ell }$ as $\gamma_\ell $ goes to 0, which  are a key result of this paper.  

\begin{theorem}\label{limdevseconde} Assume that conditions \eqref{unifbounded1}, \eqref{unifbounded2} and \eqref{convhyp} are fulfilled. Let $(\gamma_\ell )_{\ell\in \N}$ be a sequence of positive numbers converging to 0 and $\widehat Q_t$ denotes the weak limit of the sequence of probability measures  $(\widehat Q_t^{\gamma_\ell })_{\ell\in \N}$. With  the notations of Lemma \ref{lemintaibis}, one has for any $t\in (0,1)$
\begin{align*}
&\liminf_{\gamma_\ell \to 0} \varphi''_{\gamma_\ell }(t)\\
&\geq \int  \Bigg[\Big(\sum_{\tz\in V_{_\leftarrow}(z)}
A_t(z,\tz) \,L(z,\tz)\Big)^2  + \sum_{\tz\in V_{_\leftarrow}(z),\, \ttz\in \V_{_\leftarrow}(z), \,\tz\sim \ttz} \rho \Big(A_t(z,\tz),{\mathbb A_t}(z,\ttz)\Big)  \, L(\tz, \ttz)L(z,\tz)\Bigg]  \,d\widehat Q_t(z)\\
&= \int  \Bigg[\Big(\sum_{\tz\in V(z)}
A_t(z,\tz) \,L(z,\tz)\Big)^2  + \sum_{\tz\in V(z),\, \ttz\in \V(z), \,\tz\sim \ttz} \rho \Big(A_t(z,\tz),{\mathbb A_t}(z,\ttz)\Big)  \, L(\tz, \ttz)L(z,\tz)\Bigg]  \,d\widehat Q_t(z),
\end{align*} 
and 
\begin{align*}
&\liminf_{\gamma_\ell \to 0} \psi''_{\gamma_\ell }(t)\\
&\geq \int \Bigg[ \Big(\sum_{\tz\in V_{_\rightarrow}(z)}
B_t(z,\tz) \,L(z,\tz)\Big)^2 +  \sum_{\tz\in V_{_\rightarrow}(z),\, \ttz\in \V_{_\rightarrow}(z),\, \tz\sim \ttz}  \rho\Big(B_t(z,\tz),\mathbbm{B}_t(z,\ttz)\Big)  \,
 L(\tz,\ttz)L(z,\tz)\Bigg] \,d\widehat Q_t(z)\\
 &=\int \Bigg[ \Big(\sum_{\tz\in V(z)}
B_t(z,\tz) \,L(z,\tz)\Big)^2 +  \sum_{\tz\in V(z),\, \ttz\in \V(z),\, \tz\sim \ttz}  \rho\Big(B_t(z,\tz),\mathbbm{B}_t(z,\ttz)\Big)  \,
 L(\tz,\ttz)L(z,\tz)\Bigg] \,d\widehat Q_t(z).
 \end{align*}
\end{theorem}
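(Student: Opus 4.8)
The plan is to start from the exact formula for $\varphi''_\gamma(t)$ in Lemma~\ref{deriv2phibis}, insert the low-temperature form of the generator, and then pass to the limit $\gamma_k\to 0$ inside the integral against $\widehat Q_t^{\gamma_k}$, using Lemma~\ref{lemintaibis} for the pointwise limits and a Fatou-type argument for the integration. The computation for $\psi''_\gamma(t)$ is identical, so I only discuss $\varphi$. First I would record that, by \eqref{hypL}, for $z\sim\tz$ one has $L^\gamma(z,\tz)=\gamma L(z,\tz)$ and $L^\gamma(z,z)=1+\gamma L(z,z)$, hence $L^\gamma(z,z)-L^\gamma(\tz,\tz)=\gamma(L(z,z)-L(\tz,\tz))$ and $L^\gamma(z,\tz)L^\gamma(\tz,\ttz)=\gamma^2 L(z,\tz)L(\tz,\ttz)$ along a path $z\sim\tz\sim\ttz$.

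Writing $A^\gamma_t(z,w)=e^{\nabla F^\gamma_t(z,w)}$ and substituting into Lemma~\ref{deriv2phibis}, the integrand of $\varphi''_\gamma(t)$ is the finite sum (at most $d_{\max}$, resp.\ $d_{\max}^2$, terms)
\begin{multline*}
\Phi^\gamma_t(z):=\Big(\sum_{\tz\sim z}\gamma A^\gamma_t(z,\tz)L(z,\tz)\Big)^2+\gamma^2\sum_{\tz\sim z}\big(1+\log A^\gamma_t(z,\tz)\big)A^\gamma_t(z,\tz)\big(L(z,z)-L(\tz,\tz)\big)L(z,\tz)\\
+\sum_{z\sim\tz\sim\ttz}\gamma^2\,\rho\big(A^\gamma_t(z,\tz),A^\gamma_t(z,\ttz)\big)\,L(z,\tz)L(\tz,\ttz).
\end{multline*}
The key algebraic observation is that $\rho$ is homogeneous, $\rho(\lambda^{-1}a,\lambda^{-2}b)=\lambda^{-2}\rho(a,b)$; in particular, when $d(z,\ttz)=2$, one has $\gamma^2\rho\big(A^\gamma_t(z,\tz),A^\gamma_t(z,\ttz)\big)=\rho\big(\gamma A^\gamma_t(z,\tz),\gamma^2 A^\gamma_t(z,\ttz)\big)$.

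Next I would compute, for each fixed $z\in\widehat Z$, the limit of $\Phi^{\gamma_k}_t(z)$ term by term (all sums being finite). By \eqref{conv1}, $\gamma_k A^{\gamma_k}_t(z,\tz)\to A_t(z,\tz)$, which vanishes unless $\tz\in V_{_\leftarrow}(z)$, so the first term converges to $\big(\sum_{\tz\in V(z)}A_t(z,\tz)L(z,\tz)\big)^2$. For the second (drift) term, $\gamma_k^2(1+\log A^{\gamma_k}_t)A^{\gamma_k}_t=[\gamma_k A^{\gamma_k}_t]\cdot[\gamma_k(1+\log A^{\gamma_k}_t)]$, and since $A^{\gamma_k}_t(z,\tz)$ is of polynomial order in $\gamma_k$ — which is built into the asymptotic estimates on $P^\gamma_t=P_{\gamma t}$ underlying Lemma~\ref{lemintaibis} — one has $\gamma_k\log A^{\gamma_k}_t\to 0$, so this term tends to $0$. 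For the third term I split according to $d(z,\ttz)\in\{0,1,2\}$: the subcases $d(z,\ttz)=0$ (where $A^\gamma_t(z,z)=1$) and $d(z,\ttz)=1$ contribute a quantity of order $\gamma_k|\log\gamma_k|$ and so vanish, while for $d(z,\ttz)=2$, by \eqref{conv2} and the homogeneity of $\rho$, $\gamma_k^2\rho\big(A^{\gamma_k}_t(z,\tz),A^{\gamma_k}_t(z,\ttz)\big)\to\rho\big(A_t(z,\tz),\mathbb A_t(z,\ttz)\big)$. Since $z\sim\tz\sim\ttz$ with $d(z,\ttz)=2$ exactly means $\tz\in V(z)$, $\ttz\in\V(z)$, $\tz\sim\ttz$, collecting these limits shows that $\liminf_k\Phi^{\gamma_k}_t(z)$ equals the integrand of the second displayed lower bound; and since $A_t(z,\tz)=0$ off $V_{_\leftarrow}(z)$, $\mathbb A_t(z,\ttz)=0$ off $\V_{_\leftarrow}(z)$, and $\rho(a,b)=0$ whenever $a=0$ or $b=0$, this integrand coincides with the one in the first displayed bound. (Should a cross-$\rho$-summand fail to converge, it can only push $\liminf_k\Phi^{\gamma_k}_t(z)$ to $+\infty$, which is harmless; in fact the $d$-cyclical monotonicity of $\widehat\pi^0$ from Lemma~\ref{opt} rules this out.)

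Finally I would transfer the $\liminf$ through the integral. Using $\rho(a,b)\ge-a^2$, $s(1+\log s)\ge-e^{-2}$ for $s>0$, and the uniform boundedness of $\gamma_k A^{\gamma_k}_t(z,\tz)$ and of $L$ (the latter from \eqref{unifbounded1}--\eqref{unifbounded2}), one gets a lower bound $\Phi^{\gamma_k}_t(z)\ge-C$ with $C$ independent of $z$ and $k$. Since $\widehat Q_t^{\gamma_k}\to\widehat Q_t^0$ narrowly and $\widehat Z=\supp(\widehat Q_t^0)$ is finite, one has $\widehat Q_t^{\gamma_k}(z)\to\widehat Q_t^0(z)$ for each $z$ and $\widehat Q_t^{\gamma_k}(\X\setminus\widehat Z)\to 0$, whence
\[
\varphi''_{\gamma_k}(t)=\int\Phi^{\gamma_k}_t\,d\widehat Q_t^{\gamma_k}\ \ge\ \sum_{z\in\widehat Z}\Phi^{\gamma_k}_t(z)\,\widehat Q_t^{\gamma_k}(z)-C\,\widehat Q_t^{\gamma_k}(\X\setminus\widehat Z).
\]
Letting $k\to\infty$ and inserting the term-by-term limits above gives $\liminf_k\varphi''_{\gamma_k}(t)\ge\sum_{z\in\widehat Z}\big(\liminf_k\Phi^{\gamma_k}_t(z)\big)\widehat Q_t^0(z)=\int\liminf_k\Phi^{\gamma_k}_t\,d\widehat Q_t^0$, which is the asserted bound for $\varphi$; the argument for $\psi''_\gamma(t)$ is word-for-word the same with $F^\gamma_t,A^\gamma_t,\mathbb A_t$ replaced by $G^\gamma_t,B^\gamma_t,\mathbbm B_t$ and $V_{_\leftarrow},\V_{_\leftarrow}$ by $V_{_\rightarrow},\V_{_\rightarrow}$. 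The main obstacle is precisely this last step: moving the $\liminf$ inside an integral against a \emph{varying} sequence of probability measures whose integrand involves the logarithmically divergent Schr\"odinger gradients. This is what forces $\liminf$ rather than $\lim$ in the statement, and it relies on the uniform lower bound for $\Phi^\gamma_t$ together with the fact that, $\widehat Z$ being finite, no $\widehat Q_t^0$-mass escapes it.
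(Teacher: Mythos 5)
Your overall skeleton matches the paper's: start from Lemma \ref{deriv2phibis}, use the homogeneity $\gamma^2\rho(a,b)=\rho(\gamma a,\gamma^2 b)$, take pointwise limits at each $z\in\widehat Z$ via Lemma \ref{lemintaibis}, treat the degenerate case $A_t(z,\tz)=0$ with $\rho(a,b)\geq -a^2$ so that only a lower bound survives, and conclude that the restricted and unrestricted sums coincide because $\rho$ vanishes when one argument is zero. Up to that point your argument is essentially the paper's (the parenthetical appeal to cyclical monotonicity is unnecessary and unproven, but harmless, since you correctly note that a summand diverging to $+\infty$ cannot hurt a lower bound).

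The genuine gap is in the last step, where you pass the $\liminf$ through the integral. You claim $\Phi^{\gamma_k}_t(z)\geq -C$ with $C$ independent of $z$ and $k$, justified by ``the uniform boundedness of $\gamma_k A^{\gamma_k}_t(z,\tz)$''. No such uniform bound over all $z\in\X$ is available, and it is false on infinite graphs, which the theorem must cover (its main application includes $\Z^n$, Theorem \ref{thmZ}). Concretely, on $\Z$ with $\nu_0=\delta_0$ one has $P^\gamma_t f^\gamma(z)\sim f^\gamma(0)(\gamma t)^{|z|}/|z|!$, hence $\gamma A^\gamma_t(z,z-1)\sim |z|/t$ for large $z>0$: the quantity $\gamma A^{\gamma}_t(z,\tz)$ grows without bound in $z$, and the only available lower bound on the integrand is of the form $-O(1)\,(d(x_0,z)^2+1)K^{2d(x_0,z)}/t^2$ (this is exactly \eqref{majM}--\eqref{majR}, obtained from \eqref{encadrement}), which is not a constant. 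Consequently your estimate $\varphi''_{\gamma_k}(t)\geq \sum_{z\in\widehat Z}\Phi^{\gamma_k}_t(z)\widehat Q^{\gamma_k}_t(z)-C\,\widehat Q^{\gamma_k}_t(\X\setminus\widehat Z)$ does not follow: the mass of $\widehat Q^{\gamma_k}_t$ outside $\widehat Z$ is small, but it sits at points where the integrand can be very negative, and you must rule out that this product contributes. The paper's proof does precisely this missing work: it pairs the $z$-dependent lower bound on the integrand with the decay estimate \eqref{majQt} for $\widehat Q^{\gamma}_t(z)$ off the finite set $B$ (Lemma \ref{lemmetech} (vii)), chooses $\bar\gamma$ so that the resulting minorant is summable thanks to hypothesis \eqref{convhyp}, and only then applies Fatou's lemma with respect to the counting measure. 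The fact that \eqref{convhyp} is among the hypotheses of the theorem but never enters your argument is a symptom of this gap; your shortcut is valid only when $\X$ is finite.
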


\begin{comments} Let us briefly explain  how to use this result.  First, adding the two above inequalities of this Theorem provides a  lower bound on the second derivative of the relative entropy along the Schr\"odinger path at zero temperature. Then, it remains to find good estimates of this lower bound to apply Lemma \ref{lemconvex} in order to get entropic curvature lower-bounds for the graph.  The  following equalities are a main guideline for this estimation, one has
\begin{align}\label{sumA}
\int \sum_{\tz\in V_{_\leftarrow}(z)} A_t(z,\tz) \,L(z,\tz)  \,d\widehat Q_t(z)&= 
\int \sum_{z\in\widehat Z} \sum_{\tz\in V_{_\leftarrow}(z)} {\mathrm a}_t(z,\tz,y) L(z,\tz) \,d\nu_1(y)\nonumber \\
&= \int \sum_{w\in \X} d(y,w) \,\widehat{\pi}_{_\leftarrow}(w|y) \,d\nu_1(y)\nonumber \\
&= W_1(\nu_0,\nu_1),
\end{align}
 and similarly
 \begin{equation}\label{sumB}
 \int \sum_{\tz\in V_{_\rightarrow}(z)} B_t(z,\tz) \,L(z,\tz)  \,d\widehat Q_t(z)= W_1(\nu_0,\nu_1),
 \end{equation}
 but also
\begin{align*}
 \int \sum_{ \ttz\in \V_{_\leftarrow}(z)} {\mathbb A_t}(z,\ttz) \,L^2(z,\ttz)  \,d\widehat Q_t(z)&= \int \sum_{w\in \X} d(y,w) (d(y,w)-1) \,\widehat{\pi}_{_\leftarrow}(w|y) \,d\nu_1(y)\\
 &=\iint d(x,y)(d(x,y)-1) \,d\widehat{\pi}(x,y),
 \end{align*}
 and
\[  \int \sum_{ \ttz\in \V_{_\rightarrow}(z)} {\mathbb B_t}(z,\ttz) \,L^2(z,\ttz)  \,d\widehat Q_t(z)=\iint d(x,y)(d(x,y)-1) \,d\widehat{\pi}(x,y).\] 
The easy proof of these equalities is left to the reader.
 \end{comments}

\begin{proof}[Proof of Theorem \ref{limdevseconde}.]
We only present the proof of the lower bound of $\liminf_{\gamma_\ell \to 0} \varphi''_{\gamma_\ell }(t)$ since by symmetry, identical  arguments provide the lower bound of  $\liminf_{\gamma_\ell \to 0} \psi''_{\gamma_\ell }(t)$. We start with the expression of $ \varphi_\gamma''(t)$  given by Lemma,  \ref{deriv2phibis}, for $t\in(0,1)$
 \begin{equation}\label{decomp}
   \varphi''_\gamma(t)=\int \left(M_t^\gamma +R_t^\gamma \right)\,d\widehat Q_{t}^\gamma,  
 \end{equation}
 with for any $z\in \X$,
 \begin{eqnarray*}
  M_t^\gamma(z):= \Big(  \sum_{\tz, \,\tz\sim z} e^{\nabla F_t^\gamma(z, \tz)} L^\gamma(z, \tz)\Big)^2 + \sum_{ \tz,\, \ttz,\, z\sim \tz \sim \ttz } \rho\left(e^{\nabla F_t^\gamma(z, \tz)},e^{\nabla F_t^\gamma(z, \ttz)}\right) 
L^\gamma(z, \tz )L^\gamma(\tz,\ttz ),
\end{eqnarray*}
and 
\[
R_t^\gamma(z):=\sum_{\tz,\, \tz\sim z} \left(1+\nabla F_t^\gamma(z, \tz)\right) \,e^{\nabla F_t^\gamma(z, \tz)}\left(L^\gamma(z,z)-L^\gamma(\tz,\tz) \right)\, L^\gamma(z, \tz).
\]

We will get the behaviour of $ \varphi_\gamma''(t)$ as $\gamma$ goes to zero by  applying Fatou's Lemma. For that purpose, we need first to bound from below the function  $\left(M_t^\gamma +R_t^\gamma \right)\widehat Q_t^\gamma$ uniformly in $\gamma$ by some integrable function with respect to the counting measure on $\X$.   Let us  first  lower bound $M_t^\gamma(z)$  and bound $|R_t^\gamma(z)|$ uniformly in $\gamma$, for $\gamma$ sufficiently small for any $z\in \X$. 

Recall that $\rho(a,b)=0$ as soon as $a=0$ or $b=0$, and $\rho(a,b)= (\log b-2\log a -1)b$. Therefore, easy computations give for any $a\geq 0$, 
\begin{equation}\label{infbG}
\inf_{b\geq 0} \rho(a,b)=-a^2,
\end{equation}
As a consequence, according to the definition of $A_t^\gamma$, one has 
  \[M_t^\gamma(z)\geq - \sum_{ \tz,\, \ttz,\, z\sim \tz \sim \ttz }   A_t^\gamma(z,z')^2 L^\gamma(z,\tz)L^\gamma(\tz,\ttz).\]
 From hypothesis \eqref{unifbounded1} and then applying inequality \eqref{encadrement}, it follows that  for any $z\in \X$ 
\begin{equation}\label{majM}
    M_t^\gamma(z)\geq -\gamma^2 S^2 d_{\max}^2\max_{\tz, \tz\sim z} A_t^\gamma(z,z')^2 \geq -\frac{\left(d^2(x_0,z)+1\right) \,K^{2 d(x_0,z)} \,O(1)}{ t^2}.
\end{equation}
where $x_0$ is a fixed point of $\X$, $K=2S/I $ and $O(1)$ denotes a positive constant that does not depend on $z,\gamma,t$.
Similarly, from \eqref{unifbounded1} and \eqref{encadrement}, one may show that 
\begin{equation}\label{majR}
 |R_t^\gamma(z)|\leq  \frac{\gamma }t \Big[ \log\left(\frac 1\gamma\right)+d(x_0,z)\Big]\,d(x_0,z)\, K^{d(x_0,z)}\,O(1)\leq \frac{|\gamma\log\gamma|}{t}\,d^2(x_0,z)\,K^{d(x_0,z)}\,O(1). 
\end{equation} 

Lemma \ref{lemmetech} \ref{item7} therefore implies for any $z \in \X$ and any $0\leq \gamma<\bar \gamma<1$,
\begin{eqnarray*}
(M_t^\gamma(z)+R_t^\gamma(z))\,\widehat Q_{t}^\gamma(z)
\geq -O(1)\left( \1_B(z) +\1_{\X\setminus B}(z)\, \bar\gamma\,\left(\bar\gamma K^2\right)^{[2d(x_0,z)-4D-1]_+}\right) \left(d^2(x_0,z)+1\right) \,K^{2 d(x_0,z)},
\end{eqnarray*}
where 
\[B:=\bigcup_{x\in\supp(\nu_0),y\in\supp(\nu_1)} [x,y]\quad \supset \widehat Z.\]
It remains to choose $\bar \gamma $ such that $(\bar \gamma K^3)^2<\gamma_o$ so that  hypothesis \eqref{convhyp} implies
\[\sum_{z\in \X} \left( \1_B(z) +\1_{\X\setminus B}(z) \,\bar \gamma\,\left(\bar \gamma K^2\right)^{[2d(x_0,z)-4D-1]_+}\right) \left(d^2(x_0,z)+1\right)\, K^{2 d(x_0,z)}<+\infty.\] 

Now,  conditions for Fatou's Lemma are fulfilled and one has 
\begin{eqnarray}\label{transit1}
\lim_{\gamma_\ell \to 0}  \varphi''_\gamma(t)\geq \sum_{z\in\X} \liminf_{\gamma_\ell \to 0} \left[\left(M_t^{\gamma_\ell }(z)+R_t^{\gamma_\ell }(z)\right)\widehat Q_{t}^{\gamma_\ell }(z)\right]>-\infty.
\end{eqnarray}
The weak convergence of $(\widehat Q^{\gamma_\ell })_\ell$ to $\widehat Q$ implies
$\lim_{\gamma_\ell \to 0} \widehat Q_{t}^{\gamma_\ell }(z)=\widehat Q_{t}(z)$,
and the inequality \eqref{majR} gives  
$\lim_{\gamma_\ell \to 0} R_t^{\gamma_\ell }(z)=0$  for any $z\in \X$.
As a consequence, 
\[\liminf_{\gamma_\ell \to 0} \left[\left(M_t^{\gamma_\ell }(z)+R_t^{\gamma_\ell }(z)\right)\widehat Q_{t}^{\gamma_\ell }(z)\right]=
\liminf_{\gamma_\ell \to 0}  \left[M_t^{\gamma_\ell }(z)\right] \,\widehat Q_{t}(z).\]
In order to complete the proof Proposition \ref{limdevseconde}, it remains to bound from below $\liminf_{\gamma_\ell \to 0}\left[M_t^{\gamma_\ell }(z)\right] $ for any $z \in \widehat Z$ since otherwise $\widehat Q_t(z)=0$. 
One has 
$M_t^{\gamma_\ell }= E_t^{\gamma_\ell }+F_t^{\gamma_\ell },$
where for any $z\in \widehat Z$, 
\[E_t^{\gamma_\ell }(z):=  \Big(  \sum_{\tz, \,\tz\sim z} \gamma_\ell A_t^{\gamma_\ell }(z,\tz) \,L(z, \tz)\Big)^2 -\sum_{ \tz, \,\ttz,\, z\sim \tz \sim \ttz } \!\!\!\!\!\!\gamma_\ell  ^2  A_t^{\gamma_\ell }(z,\ttz)^2 \,L(z, \tz )L(\tz,\ttz ), \]
and
\[F_t^{\gamma_\ell }(z)
= \sum_{ \tz, \,\ttz,\, z\sim \tz \sim \ttz } \gamma_\ell  ^2 \left[\rho\left(A_t^{\gamma_\ell }(z,\tz),A_t^{\gamma_\ell }(z,\ttz)\right) + A_t^{\gamma_\ell }(z,\ttz)^2 \right] 
L(z, \tz )L(\tz,\ttz ).\]

 Lemma \ref{lemintaibis} implies 
\begin{eqnarray}\label{transit2}
\lim_{\gamma_\ell \to 0} E_t^{\gamma_\ell }(z) =  \Big(  \sum_{\tz \in V_{_\leftarrow}(z)}A_t(z,\tz) \,L(z, \tz)\Big)^2- \sum_{ \tz\in V_{_\leftarrow}(z),\,\ttz\in \X,\, \ttz \sim \tz }   A_t(z,\tz)^2\, L(z, \tz )L(\tz,\ttz )  .
\end{eqnarray}
Assume that $\tz\in V_{_\leftarrow}(z)$, or equivalently  $\lim_{\gamma_\ell \to 0} \gamma_\ell  A_t^{\gamma_\ell }(z,\tz)\neq 0$. According to  
Lemma \ref{lemintaibis}, for any $\ttz\sim \tz$, one has  $\lim_{\gamma_\ell \to 0} \left(\gamma_\ell ^2
A_t^{\gamma_\ell }(z,\ttz)\right) =0$ if $d(z,\ttz)\leq1$ and $\lim_{\gamma_\ell \to 0} \left(\gamma_\ell ^2
A_t^{\gamma_\ell }(z,\ttz)\right) = {\mathbbm A}_t(z,\ttz)$ if $\ttz\in \V(z)$. As a consequence  
the continuity of the function $\rho$ on the set $(0,\infty)\times [0,\infty)$, 
implies
\begin{eqnarray*}
\lim_{\gamma_\ell \to 0}  \left[\rho\left( \gamma_\ell  A_t^{\gamma_\ell }(z,\tz), \gamma_\ell  ^2 A_t^{\gamma_\ell }(z,\ttz)\right) + \gamma_\ell  ^2A_t^{\gamma_\ell }(z,\tz)^2 \right]
  = \rho\Big( A_t(z,\tz),  {\mathbbm A}_t(z,\ttz)\Big)\1_{\ttz\in \V(z)} +  {A}_t(z,\tz)^2.
\end{eqnarray*}
If $\tz\in V(z)\setminus V_{_\leftarrow}(z)$, or equivalently $\lim_{\gamma_\ell \to 0} \gamma_\ell  A_t^{\gamma_\ell }(z,\tz)=A_t(z,\tz)= 0$, then  identity \eqref{infbG}  provides, according to the definition of the function $\rho$,
\begin{multline*}
\liminf_{\gamma_\ell \to 0}  \left[\rho\left( \gamma_\ell A_t^{\gamma_\ell }(z,\tz), \gamma_\ell  ^2 A_t^{\gamma_\ell }(z,\ttz)\right) + \gamma_\ell  ^2A_t^{\gamma_\ell }(z,\tz)^2 \right]\\
\geq 0=\rho(0,{\mathbbm A}_t(z,\ttz))= \rho( A_t(z,\tz),{\mathbbm A}_t(z,\ttz)) \1_{\ttz\in \V(z)}+   {A}_t(z,\tz)^2. 
\end{multline*}
As a consequence, one gets  
 \begin{multline*} 
 \liminf_{\gamma_\ell \to 0} F_t^{\gamma_\ell }(z) \geq \!\!\sum_{ \tz,\, \ttz, \,z\sim \tz \sim \ttz } \!\!  \left[\rho( A_t(z,\tz),{\mathbbm A}_t(z,\ttz))\1_{\ttz\in \V(z)}+  {A}_t(z,\tz)^2\right] \\
=  \!\!\sum_{ \tz\in V_{_\leftarrow}(z),\, \ttz\in \V_{_\leftarrow}(z) ,\,  \tz \sim \ttz } \!\!  \rho( A_t(z,\tz),{\mathbbm A}_t(z,\ttz)) + \sum_{ \tz\in V_{_\leftarrow}(z),\,\ttz\in \X,\, \ttz \sim \tz }   A_t(z,\tz)^2\, L(z, \tz )L(\tz,\ttz ).
\end{multline*}
This inequality together with \eqref{transit1} and \eqref{transit2} ends the proof of Theorem \ref{limdevseconde}.
\end{proof}

\newpage
\subsection{Application to specific examples of graphs}

\subsubsection{The lattice $\Z^n$ .}

\begin{proof}[Proof of Theorem \ref{thmZ}]
For any $z\in \Z^n$  and  any  $i\in[n]$, we note
$\sigma_{i+}(z)=z+e_i$ and $\sigma_{i-}(z)=z-e_i$. One has $\sigma_{i+}\sigma_{i-}=id$ and for $j\neq i$, $\sigma_{i+}\sigma_{j+}=\sigma_{j+}\sigma_{i+}$, $\sigma_{i+}\sigma_{j-}=\sigma_{j-}\sigma_{i+}$, $\sigma_{i-}\sigma_{j-}=\sigma_{j-}\sigma_{i-}$. We note 
\[A_{i+}(z):=A_t(z,\sigma_{i+}(z)), \quad A_{i+j+}(z):={\mathbbm A}_t(z,\sigma_{i+}\sigma_{j+}(z)),\qquad z\in \Z^n.\]
We define similarly $A_{i-},A_{i-j-},A_{i-j+}$.
Applying  Theorem  \ref{limdevseconde},   by symmetrisation one gets
\begin{align*}
\liminf_{\gamma_\ell \to 0} \varphi''_{\gamma_\ell }(t)
&\geq \int  \Big(\sum_{i=1}^n
(A_{i+}  + A_{i-})\Big)^2 \,d\widehat Q_{t}
+\int  \sum_{i=1}^n \Big(\rho\left(A_{i+},A_{i+i+}\right) + \rho\left(A_{i-},A_{i-i-}\right)\Big)\,d\widehat Q_{t}\\
&+\frac12\int  \sum_{i,j, i\neq j} \Big(\rho(A_{i+},A_{j+i+})+\rho(A_{j+},A_{j+i+})\Big) + \Big(\rho(A_{i-},A_{j-i-})+\rho(A_{j-},A_{j-i-})\Big)\\
&\qquad+\Big(\rho(A_{i+},A_{j-i+})+\rho(A_{j-},A_{j-i+})\Big) + \Big(\rho(A_{i-},A_{j+i-})+\rho(A_{j+},A_{j+i-})\Big) \, \,d\widehat Q_{t}.
\end{align*}
Identity \eqref{infbG} implies for any $a,a',b\in \R_+$, 
\begin{equation}\label{infbGG}
  \rho(a,b)+\rho(a',b)=2 \rho\left(\sqrt{aa'},b\right)\geq -2aa'.
   \end{equation}
   It follows that 
\begin{align*}
\liminf_{\gamma_\ell \to 0} \varphi''_{\gamma_\ell }(t)
  &\geq \int  \Big(\sum_{i=1}^n
(A_{i+}  + A_{i-})\Big)^2 \,d\widehat Q_t
-\int  \sum_{i=1}^n \Big(A_{i+}^2 +A_{i-}^2\Big) \,d\widehat Q_{t}\\
&\qquad-\int  \sum_{i,j, i\neq j} \Big(A_{i+}A_{j+}+A_{i-}A_{j-}+A_{i+}A_{j-}+A_{i-}A_{j+}\Big)\,d\widehat Q_t\\
&=2\int\sum_{i=1}^nA_{i+} A_{i-}\,d\widehat Q_t \geq 0.
\end{align*}
Identically one  proves  that 
$\displaystyle \liminf_{\gamma_\ell \to 0} \psi''_{\gamma_\ell }(t)
  \geq 0$.
Applying then Lemma \ref{lemconvex} ends the proof of Theorem~\ref{thmZ}.
\end{proof}

\subsubsection{The complete graph}

\begin{proof}[Proof of Theorem \ref{thmcomplete}]
Since for any $x,y\in X$, $d(x,y)=1$, Theorem \ref{limdevseconde} and Lemma \ref{lemintaibis} provide for any $t\in (0,1)$
\begin{align*}
\liminf_{\gamma_\ell \to 0} \varphi''_{\gamma_\ell }(t)
&\geq \int  \Big(\sum_{\tz\in V_{_\leftarrow}(z)}
A_t(z,\tz) \,L(z,\tz)\Big)^2 d\widehat Q_t(z)=  \iint  \Big(\sum_{\tz\in V_{_\leftarrow}(z)}
A_t(z,\tz) \,L(z,\tz)\Big)^2 d\widehat Q_{t,1}(z,y)\\
&= \int \sum_{z\in Z^y} \Bigg(\sum_{\tz\in V_{_\leftarrow}(z)}
\frac{{\mathrm a}_t(z, \tz,y)}{a_t(z,y)} \,L(z,\tz)\Bigg)^2 a_t(z,y) d\nu_1(y)
\end{align*}
 With the expression \eqref{expnu0complete} of ${Q_t}\!^{ x,y}$, one easily check that 
 for any $z\in \widehat Z,y\in \widehat Y_z$, or equivalently for any $y\in\supp(\nu_1), z\in \widehat Z^y$, 
 \[a_t(z,y)= (1-t) \,\widehat{\pi}_{_\leftarrow}(z|y)+t \,\delta_y(z),\]
 and with \eqref{a_t'}, for any $\tz\in V_{_\leftarrow}(z)$,
 \[{\mathrm a}_t(z, \tz,y)= \1_{z=y} \frac{ \widehat{\pi}_{_\leftarrow}(\tz|y)}{\mu(\tz)}.\]
As a consequence, one gets
\begin{align*}
& \int \sum_{z\in Z^y} \Bigg(\sum_{\tz\in V_{_\leftarrow}(z)}
\frac{{\mathrm a}_t(z, \tz,y)}{a_t(z,y)} \,L(z,\tz)\Bigg)^2 a_t(z,y) d\nu_1(y)
= \int \Big(\sum_{\tz\in V_{_\leftarrow}(y)} \frac{{\mathrm a}_t(y, \tz,y)}{a_t(y,y)} \mu(\tz)\Big)^2 \!\!a_t(y,y) \,d\nu_1(y)\\
&= \int \,\frac{\Big(1-\widehat{\pi}_{_\leftarrow}(y|y)\Big)^2}{1-(1-t)\left(1-\widehat{\pi}_{_\leftarrow}(y|y)\right)} \,d\nu_1(y)= \int \, \frac12 \Big(1-\widehat{\pi}_{_\leftarrow}(y|y)\Big)^2 h''\left((1-t)\left(1-\widehat{\pi}_{_\leftarrow}(y|y)\right)\right) d\nu_1(y)
= \xi_\leftarrow''(t),
\end{align*}
where  for any $t\in [0,1]$,
\[\xi_\leftarrow(t):=\frac12 \int h\left((1-t)(1-\widehat{\pi}_{_\leftarrow}(y|y)\right) \,  d\nu_1(y).\]
One similarly shows that for any $t\in (0,1)$,
\[\liminf_{\gamma_\ell \to 0} \psi''_{\gamma_\ell }(t)\geq \xi_\rightarrow''(t) ,\]
with 
$\xi_\rightarrow(t):=\frac12\int h\left(t(1-\widehat{\pi}_{_\rightarrow}(x|x)\right) \,  d\nu_0(x)$.
The proof of Theorem \ref{thmcomplete} ends  applying  Lemma \ref{lemconvex} and the two following identities
\[(1-t)\xi_\leftarrow(0)+t\xi_\leftarrow(1)-\xi_\leftarrow(t)=\frac{t(1-t)}2\int h_{1-t}\left(\int \1_{w\neq y} d \widehat{\pi}_{_\leftarrow}(w|y)\right) d\nu_1(y),\]
and
\[(1-t)\xi_\rightarrow(0)+t\xi_\rightarrow(1)-\xi_\rightarrow(t)=
\frac{t(1-t)}2\int h_t \left(\int \1_{w\neq x} d \widehat{\pi}_{_\rightarrow}(w|x)\right)d\nu_0(x). 
\]

Let us now compare $C_t(\widehat \pi)$ with a function of $W_1(\nu_0,\nu_1)$.
Observe that for any $y\in \supp(\nu_1)$, 
$ \int \1_{w\neq y} d \widehat{\pi}_{_\leftarrow}(w|y)
\neq 0$,
if and only if $y$ belongs to the set 
\[D_{_\leftarrow}:=\Big\{ w\in \supp(\nu_1)\,\Big|\, \exists x\in \X, w\neq x, (x,w)\in \supp(\widehat{\pi}) \Big\}.\] Since $h_{1-t}(0)=0$ and $h_{1-t}$ is convex, 
Jensen's inequality provides 
\[\int h_{1-t}\left(\int \1_{w\neq y} d \widehat{\pi}_{_\leftarrow}(w|y)\right) d\nu_1(y)\geq 
\nu_1(D_{_\leftarrow})\,h_{1-t} \left(\frac{ \iint \1_{w\neq y} d \widehat{\pi}_{_\leftarrow}(w|y) d\nu_1(y)  }{\nu_1(D_{_\leftarrow})} \right)=\nu_1(D_{_\leftarrow})\,h_{1-t} \left(\frac{W_1(\nu_0,\nu_1)}{\nu_1(D_{_\leftarrow})} \right).\]
Similarly one has 
\[\int h_{t}\left(\int \1_{w\neq x} d \widehat{\pi}_{_\rightarrow}(w|x)\right) d\nu_0(x)\geq 
\nu_0(D_{_\rightarrow})\,h_{t} \left(\frac{ \iint \1_{w\neq x} d \widehat{\pi}_{_\rightarrow}(w|x) d\nu_1(y)  }{\nu_1(D_{_\rightarrow})} \right)=\nu_0(D_{_\rightarrow})\,h_{t} \left(\frac{W_1(\nu_0,\nu_1)}{\nu_0(D_{_\rightarrow})} \right),\]
with 
\[D_{_\rightarrow}:=\Big\{ w\in \supp(\nu_0)\,\Big|\, \exists y\in \X, w\neq y, (w,y)\in \supp(\widehat{\pi}) \Big\}.\] 
According to  \eqref{totvar}, $W_1(\nu_0,\nu_1)\geq \nu_0(D_\rightarrow)-\nu_1(D_\rightarrow)$, and  
 we know  from Lemma \ref{opt} \ref{mon2} that  the sets $D_{_\leftarrow}$ and $D_{_\rightarrow}$ are disjoint. As a consequence,   
 \[\nu_0(D_\rightarrow)+\nu_1(D_\leftarrow)\leq 
 W_1(\nu_0,\nu_1)+\nu_1(D_\rightarrow)+\nu_1(D_\leftarrow)+ \leq W_1(\nu_0,\nu_1) +1.\]
This leads to the expected result \eqref{zut} :
\begin{align*}
C_t(\widehat \pi)&\geq  (1+W_1(\nu_0,\nu_1))\inf_{\alpha,\beta, 0<\alpha+\beta\leq 1}\left\{ \alpha h_t\left(\frac{W_1(\nu_0,\nu_1)}{\alpha(1+W_1(\nu_0,\nu_1))}\right)+\beta h_{1-t}\left(\frac{W_1(\nu_0,\nu_1)}{\beta(1+W_1(\nu_0,\nu_1))}\right)\right\},\\
&=(1+W_1(\nu_0,\nu_1)) \, k_t\left(\frac{W_1(\nu_0,\nu_1)}{1+W_1(\nu_0,\nu_1)}\right).
\end{align*}

In order to prove the estimate \eqref{med} of the function $k_t$, one  first observes that by construction, for any $t\in (0,1)$ and $v\in[0,1]$, 
\[h_t(v)=\frac12 \int_0^1 v^2 h''(uv)\, K_t(u) \, du= \int_0^1 \frac{v^2}{1-u v} K_t(u) \,du,\]
and since  $K_t(u)=K_{1-t}(1-u)$, 
 \[h_{1-t}(v)=  \int_0^1 \frac{v^2}{1-(1-u) v} K_t(u) \,du.\]
 Since $h_t(u)=+\infty$, for $u>1$, it follows that  for any $v\in [0,1/2]$,
 \begin{multline*}
k_t(v)=\inf_{\alpha,\beta, 0<\alpha+\beta\leq 1}\left\{ \alpha h_t\left(\frac v\alpha\right)+\beta h_{1-t}\left(\frac v\beta\right)\right\}\geq \inf_{\alpha,\beta,\alpha> v,\beta>v, \alpha+\beta\leq 1} \left\{ \alpha h_t\left(\frac v\alpha\right)+\beta h_{1-t}\left(\frac v\beta\right)\right\}\\ 
\geq  \int_0^1 v^2 \inf_{\alpha,\beta,\alpha>v,\beta>v, \alpha+\beta\leq 1}\left\{\frac{1}{\alpha-uv}+\frac{1}{\beta-(1-u)v}\right\} K_t(u) du .
\end{multline*}
 Easy computations give
 \begin{align*}
  &\inf_{\alpha,\beta,\alpha>v,\beta>v, \alpha+\beta\leq 1}\left\{\frac{1}{\alpha-uv}+\frac{1}{\beta-(1-u)v}\right\}=  \inf_{\alpha',\beta',\alpha'>(1-u)v,\beta'>uv, \alpha'+\beta'\leq 1-v}\left\{\frac1{\alpha'}+\frac1{\beta'}\right\}\\
  &\qquad\qquad\qquad\qquad\geq\inf_{\alpha',\beta', \alpha'>0,\beta'>0, \alpha'+\beta'\leq 1-v}\left\{\frac1{\alpha'}+\frac1{\beta'}\right\}=\frac{4}{1-v}.
  \end{align*}
  It provides the expected estimate \eqref{med}, namely $k_t(v)\geq \frac{4v^2}{1-v}$.
 \end{proof}

\subsubsection{Product probability measures on the discrete hypercube}\label{cube}
\begin{proof}[Proof of Theorem \ref{thmcube}]
The first step of the proof is to express the lower bounds on $\liminf_{\gamma_{\ell}\to 0} \varphi_{\gamma_\ell}''(t)$ and $\liminf_{\gamma_{\ell}\to 0} \psi_{\gamma_\ell}''(t)$ given by Theorem \ref{limdevseconde}
using the symmetries of the graph structure of the hypercube, and  keeping in mind the comments given next to Theorem \ref{limdevseconde}. This leads to the estimates  \eqref{aller1} and \eqref{aller2}. 
The second step is to prove that  each of the lower bound on $C_t(\widehat \pi)$ in  Theorem \ref{thmcube}  is a consequence of  these estimates. 

{\bf Step 1 :} Given $z\in \widehat Z$, let us define the sets 
\[I^{\leftarrow}(z):=\Big\{i\in [n]\,\Big|\, \sigma_i(z) \in V_{_\leftarrow}(z)\Big\}=\Big\{i\in [n]\,\Big|\, (z,\sigma_i(z))\in C_{_\leftarrow}\Big\},\]
\[I^{\rightarrow}(z):=\Big\{i\in [n]\,\Big|\, \sigma_i(z) \in V_{_\rightarrow}(z)\Big\}=\Big\{i\in [n]\,\Big|\, (z,\sigma_i(z))\in C_{_\rightarrow}\Big\},\]
\[{\mathbb I}^{\leftarrow}(z):=\Big\{(i,j)\in [n]\times[n]\,\Big|\, i\neq j,\sigma_i\sigma_j(z) \in {\mathbb V}_{_\leftarrow}(z)\Big\}=\Big\{(i,j)\in [n]\times[n]\,\Big|\, (z,\sigma_i\sigma_j(z))\in C_{_\leftarrow}\Big\},\]
\[{\mathbb I}^{\rightarrow}(z):=\Big\{(i,j)\in [n]\times[n]\,\Big|\, \sigma_i\sigma_j(z) \in {\mathbb V}_{_\rightarrow}(z)\Big\}=\Big\{(i,j)\in [n]\times[n]\,\Big|\, (z,\sigma_i\sigma_j(z))\in C_{_\rightarrow}\Big\}.\]
and 
$ {\mathbb I}_1^{\leftarrow}(z):=\big\{ i\in [n]\,|\, \exists j\in [n], (i,j)\in{\mathbb I}^{\leftarrow}(z)\big\}$, $ {\mathbb I}_1^{\rightarrow}(z):=\big\{ i\in [n]\,|\, \exists j\in [n], (i,j)\in{\mathbb I}^{\rightarrow}(z)\big\}$. Observe that if ${\mathbb I}^{\leftarrow}(z)\neq \emptyset$ then $|{\mathbb I}_1^{\leftarrow}(z)|\geq 2$.  Obviously  one has ${\mathbb I}_1^{\leftarrow}(z) \subset I^{\leftarrow}(z)$ and  since $\sigma_i\sigma_j=\sigma_j\sigma_i$, one has ${\mathbb I}^{\leftarrow}(z)=\{ (i,j)\, \,|\, i,j \in {\mathbb I}_1^{\leftarrow}(z), i\neq j\}$. 
Same remarks hold with the  sets  $ I^{\rightarrow}(z),{\mathbb I}_1^{\rightarrow}(z),{\mathbb I}^{\rightarrow}(z)$. 
The sets $C_{_\leftarrow}$ and $C_{_\rightarrow}$  are disjoints and therefore $I^{\leftarrow}(z)$ and $I^{\rightarrow}(z)$ are also disjoints. 
To simplify,  for $z\in \widehat Z$ and $i,j\in [n], i\neq j$ les us denotes 
\[A_i(z):= A_t(z,\sigma_i(z)),\quad {\mathbb A}_{ij}(z):={\mathbb A}_t(z,\sigma_i\sigma_j(z)), \quad L_i(z):=L(z,\sigma_i(z)).\]
Since for any $i\neq j $, $\sigma_i\sigma_j=\sigma_j\sigma_i$, one has ${\mathbb A}_{ij}={\mathbb A}_{ji}$, and observing that
 $L\left(\sigma_i(z), \sigma_j \sigma_i(z)\right) =L_j(z)$
Theorem \ref{limdevseconde} provides after symmetrization 
\begin{equation}\label{totocube}
\liminf_{\gamma_\ell \to 0} \varphi''_{\gamma_\ell }(t)\geq \int  \Big(\sum_{i\in I^{\leftarrow}}
A_i\,L_i\Big)^2 \,d\widehat Q_t+ \int\sum_{\{i,j\}\subset {\mathbb I}_1^{\leftarrow} } \Big[\rho\Big(A_i,{\mathbbm A}_{ij}\Big)+\rho\Big(A_j,{\mathbbm A}_{ij}\Big) \Big] L_iL_j\,d\widehat Q_t.
\end{equation}
Let ${\mathbb A}:=\sum_{\{i,j\}\subset {\mathbb I}_1^{\leftarrow}} 2 {\mathbb A}_{ij}L_{i}L_j$ and  
$\beta_{ij}:=\frac{2 {\mathbb A}_{ij}L_{i}L_j}{\mathbb A}$.
 According to the definition of the function $\rho$ given in Lemma \ref{deriv2phibis},  computations provide 
\begin{align}\label{ouf2}
&\sum_{\{i,j\}\subset {\mathbb I}_1^{\leftarrow}} \Big[\rho\Big(A_i,{\mathbbm A}_{ij}\Big) +\rho\Big(A_j,{\mathbbm A}_{ij}\Big) \Big] L_i L_j\nonumber\\
&={\mathbb A}\log {\mathbb A} -{\mathbb A} + {\mathbb A}\sum_{\{i,j\}\subset{\mathbb I}_1^{\leftarrow}} \beta_{ij}\log ( \beta_{ij})-{\mathbb A}\sum_{\{i,j\}\subset {\mathbb I}_1^{\leftarrow}} \log( 2A_iA_j) \,\beta_{ij}\nonumber \\
&\geq {\mathbb A}\log {\mathbb A} -{\mathbb A} + {\mathbb A} \log \sum_{\{i,j\}\subset {\mathbb I}_1^{\leftarrow}}  2A_iA_j,
\end{align}
 where the last inequality follows from the duality formula between the log-Laplace transform and the entropy, namely in this case 
 \[\sup_{\beta}\left\{ \sum_{\{i,j\}\subset {\mathbb I}_1^{\leftarrow}}  \log( 2A_iA_j)  \,\beta_{ij}- \sum_{\{i,j\}\subset{\mathbb I}_1^{\leftarrow}}  \beta_{ij}\log ( \beta_{ij}) \right\}= \log \sum_{\{i,j\}\subset {\mathbb I}_1^{\leftarrow}} 2A_iA_j , \]
 where the supremum runs over all probabilities $\beta$ on ${\mathbb I}_1^{\leftarrow}$. Note that $\mathbb A\neq 0$ if and only if $|{\mathbb I}_1^{\leftarrow}|\geq 2$ and therefore $\sum_{\{i,j\}\subset {\mathbb I}_1^{\leftarrow}} 2A_iA_j>0$. It follows that  all quantities above are well defined.  Setting $A:=\sum_{i\in I^{\leftarrow}} A_i L_i$ and  ${\widetilde A}^2:=\sum_{i\in I^{\leftarrow}} (A_i L_i)^2$, since ${\mathbb I}_1^{\leftarrow}\subset I^{\leftarrow}$,
\eqref{totocube} and  \eqref{ouf2} finally give the following lower-bound
 \begin{equation}\label{aller1}
 \liminf_{\gamma_\ell \to 0} \varphi''_{\gamma_\ell }(t)\geq \int \left[A^2 -{\mathbb A} + {\mathbb A}\left(\log {\mathbb A}-\log\left(A^2-\widetilde A^2\right)\right)\1_{|{ I}^{\leftarrow}|\geq 2} \right] d\widehat Q_t.
 \end{equation}
  From the lower-bound of $ \liminf_{\gamma_\ell \to 0} \psi''_{\gamma_\ell }(t)$
  given by Theorem \ref{thmcube}, following the same lines of proof one gets 
  \begin{equation}\label{aller2}
 \liminf_{\gamma_\ell \to 0} \psi''_{\gamma_\ell }(t)\geq \int \left[B^2 -{\mathbb B} + {\mathbb B}\left(\log {\mathbb B}-\log\left(B^2-\widetilde B^2\right)\right)\1_{|{ I}^{\rightarrow}|\geq 2}  \right] d\widehat Q_t,
 \end{equation}
 where we set for any $z\in \widehat Z$
 \[B(z):=\sum_{i\in I^\rightarrow} B_t(z,\sigma_i(z))\,  L_i(z),\quad \widetilde B^2(z):=\sum_{i\in I^\rightarrow} B^2_t(z,\sigma_i(z))\,  L_i^2(z),\quad {\mathbb B}(z):=\sum_{\{i,j\}\subset {\mathbb I}_1^\rightarrow} {\mathbb B}_t(z,\sigma_i\sigma_j(z))\, L_{i}(z) L_{j}(z). \]

{\bf Step 2 :} By the Cauchy-Schwarz inequality $\widetilde A^2\geq A^2/ |I^\leftarrow|$ 
and therefore  \eqref{aller1}  gives
\begin{align}\label{aller1'}
 \liminf_{\gamma_\ell \to 0} \varphi''_{\gamma_\ell }(t)&\geq \int \left[A^2 -{\mathbb A} + {\mathbb A}\left(\log {\mathbb A}-\log(A^2)-\log\left( 1-\frac{1}{|I^\leftarrow|}\right) \right)\1_{|{ I}^{\leftarrow}|\geq2}\right] d\widehat Q_t\nonumber \\
 & 
 \geq \int \max \left\{ \frac{A^2}{|I^\leftarrow|}\, , -\log\Big( 1-\frac{1}{|I^\leftarrow|}\Big) \, {\mathbb A} \right\} d\widehat Q_t\nonumber\\
 &\geq  \int  \frac{1}{|I^\leftarrow|}  \max \left\{ A^2,  {\mathbb A} \right\} d\widehat Q_t
 \end{align}
 where the last inequalities follows from the concavity property of the logarithmic function and since ${A^2}/{|I^\leftarrow|}=A^2\1_{|I^\leftarrow|=1}+{A^2}/{|I^\leftarrow|}\1_{|I^\leftarrow|\geq2}$. Identically \eqref{aller2} implies 
 \begin{equation}\label{aller2'}
 \liminf_{\gamma_\ell \to 0} \psi''_{\gamma_\ell }(t)\geq  \int  \frac{1}{|I^\rightarrow|}  \max \left\{ B^2,  {\mathbb B} \right\} d\widehat Q_t  
 \end{equation}

Keeping only the quantities involving $A$ and $B$ in \eqref{aller1'} and \eqref{aller2'}, and applying Cauchy-Schwarz inequality, the identities \eqref{sumA} and \eqref{sumB} yield 
 \[\liminf_{\gamma_\ell \to 0} \varphi''_{\gamma_\ell }(t)+ \liminf_{\gamma_\ell \to 0} \psi''_{\gamma_\ell }(t)\geq \, W_1^2(\nu_0,\nu_1) \left[\frac{1}{ \int |I^\leftarrow| \, d\widehat Q_t} +\frac{1}{ \int |I^\rightarrow| \, d\widehat Q_t} \right].\]
 Since, the sets $I^\leftarrow$ and $I^\rightarrow$ are disjoint $  \int |I^\leftarrow| \, d\widehat Q_t+ \int |I^\rightarrow| \, d\widehat Q_t \leq n$, and therefore  the identity  $\min_{\alpha,\beta>0, \alpha+\beta\leq 1}\left\{\frac{1}\alpha+\frac{1}\beta\right\}=4$ implies
 \[\liminf_{\gamma_\ell \to 0} \varphi''_{\gamma_\ell }(t)+ \liminf_{\gamma_\ell \to 0} \psi''_{\gamma_\ell }(t)\geq \frac{4}{n}\,  W_1^2(\nu_0,\nu_1).\]
Then applying Lemma \ref{lemconvex}, this estimate give the first lower bound of $C_t(\widehat \pi)$ in Theorem \ref{thmcube}.

Keeping only the quantities involving ${\mathbb A} $ and ${\mathbb B} $ in \eqref{aller1'} and \eqref{aller2'}, one gets  
 \begin{equation}\label{oouf}
 \liminf_{\gamma_\ell \to 0} \varphi''_{\gamma_\ell }(t)+ \liminf_{\gamma_\ell \to 0} \psi''_{\gamma_\ell }(t)\geq   \int  \frac{\mathbb A}{  |I^\leftarrow| } +\frac{{\mathbb B} }{  |I^\rightarrow| }\, d\widehat Q_t .
 \end{equation}
 According to   Lemma \ref{lemintaibis} and \eqref{a_t''},    for any $i,j\in[n]$  with $i\neq j$, for any $z\in\widehat Z$ and $y\in\widehat Y_z$,
 \[ 
 2\displaystyle {\mathbbm A}_t(z,\sigma_j\sigma_i(z)) L_i(z)L_j(z):=\frac{{\mathbbm a}_t(z,\sigma_j\sigma_i(z),y) 2L_i(z)L_j(z)}{a_t(z,y)}
 \]
with 
 \begin{equation*}
  {\mathbbm a}_t(z,\sigma_j\sigma_i(z),y):=\!\!\!\!\!\sum_{w, (z, \sigma_i\sigma_j(z))\in [y,w]}  
\!\!\!\!\!r(y,z,\sigma_i\sigma_j(z),w) \,d(y,w)(d(y,w)-1)\, \B_t^{d(y,w)-2}(d(z,w)-2)\,\widehat{\pi}_{_\leftarrow}(w|y).
\end{equation*}
From the identity
\[\sum_{\{i,j\}\subset{\mathbb I}_1^{\leftarrow}}  r(y,z,\sigma_i\sigma_j(z),w) L^2(z,\sigma_i\sigma_j(z)) = r(y,z,z,w),\]
and since $L^2(z,\sigma_i\sigma_j(z))=2L_i(z)L_j(z)$ one has for any $z\in\widehat Z$ and $y\in\widehat Y_z$,
\[{\mathbb A}(z) =\frac{1}{a_t(z,y)} \sum_{w\in \X} \sum_{z,z\in [y,w]} r(y,z,z,w) d(y,w)(d(y,w)-1)\, \B_t^{d(y,w)-2}(d(z,w)-2)\,\widehat{\pi}_{_\leftarrow}(w|y).\]
Working identically with ${\mathbb B}(z)$ one finally gets 
\begin{equation}\label{osteo2}
 \int  \frac{\mathbb A}{  |I^\leftarrow| } + \frac{{\mathbb B} }{  |I^\rightarrow| } \, d\widehat Q_t \geq \iint c_t(x,y) \,d\widehat{\pi}(x,y),
 \end{equation}
where 
\begin{multline*}
c_t(x,y)=\sum_{z\in[x,y]}  \frac1{|I^\leftarrow(z)|}\,r(x,z,z,y) d(x,y)(d(x,y)-1)\, \B_t^{d(x,y)-2}(d(x,z)-2) \\
+\sum_{z\in[x,y]}  \frac1{|I^\rightarrow(z)|} r(x,z,z,y) d(x,y)(d(x,y)-1)\, \B_t^{d(x,y)-2}(d(x,z)).
\end{multline*}
Since $ |I^\leftarrow| \leq n$ and $ |I^\rightarrow|\leq n$ and for any $k\in\{0,\ldots,d(x,y)\}$, $\sum_{z\in[x,y], d(x,z)=k}r(x,z,z,y) =1$, it follows that  
\begin{equation}\label{osteo}
c_t(x,y)\geq \frac 2n  d(x,y)(d(x,y)-1).
\end{equation}
For large values of $d(x,y)$, this lower bound can be improved using the fact that $I^\leftarrow$ and  $I^\rightarrow$ are disjoint and therefore $ |I^\leftarrow| + |I^\rightarrow| \leq n$. By first rewriting  $c_t(x,y)$,  applying Cauchy-Schwarz inequality, and then using the identity $\inf_{\alpha>0,\beta>0, \alpha+\beta \leq 1}\left\{ \frac{u^2}\alpha +\frac{v^2}{\beta}\right\}= (u+ v)^2, \, u,v\geq 0$, one gets 
\begin{align*}
c_t(x,y)&=\sum_{z,z\in [x,y]} \left( \frac{d(x,z)(d(x,z)-1)}{ |I^\leftarrow(z)| t^2}+\frac{d(z,y)(d(z,y)-1)}{|I^\rightarrow(z)| (1-t)^2}\right)\,  {Q_t}\!^{ x,y}(z)\\
&\geq \frac{\left(\int \sqrt{d(x,z)(d(x,z)-1)} d{Q_t}\!^{ x,y}(z)\right)^2}{t^2\int I^\leftarrow d{Q_t}\!^{ x,y}} + \frac{\left(\int \sqrt{d(z,y)(d(z,y)-1)} d{Q_t}(z)\!^{ x,y}(z)\right)^2}{(1-t)^2\int I^\rightarrow d{Q_t}\!^{ x,y}}
 \\&\geq\frac1n\left(\int \frac{\sqrt{d(x,z)(d(x,z)-1)}}{t} d{Q_t}\!^{ x,y}(z)+ \int \frac{\sqrt{d(z,y)(d(z,y)-1)}}{1-t} d{Q_t}\!^{ x,y}(z)\right)^2 \\
 &=\frac4n \,v_t(d(x,y)),
 \end{align*}
 with for any $d\in\N$, 
 \begin{equation*}
  v_t(d)
=\frac14\left(\sum_{k=0}^{d} \sqrt{k(k-1)}\, \Big(\frac{\B_t^{d} (k)}t+\frac{\B_{1-t}^{d} (k)}{1-t}\Big) \right)^2. 
\end{equation*}
Then applying Lemma \ref{lemconvex} together with \eqref{oouf}, \eqref{osteo2}, \eqref{osteo} provides the following  lower bound on the cost  $C_t(\widehat \pi)$, 
\begin{equation*}
C_t(\widehat \pi) 
\geq   \frac4n \iint  w_t(d(x,y))\,d\widehat{\pi}(x,y) 
\end{equation*}
with \[w_t(d):=\max\left\{\frac{d(d-1)}2,\int_0^1 v_s(d) \,K_t(s)\,ds\right\},\qquad d\in\N.\]
The proof of the second lower bound on $C_t(\widehat \pi)$ ends from the next estimate of the quantity $\int_0^1 v_s(d) \,K_t(s)\,ds$.
Since for any $s\in(0,1)$ and $d\in \N$, one has 
\begin{align*}
v_s(d)&=\frac14 \left(2d-\sum_{k=1}^{d} \frac{k}{k+\sqrt{k(k-1)}}\, \Big(\frac{\B_t^{d} (k)}t+\frac{\B_{1-t}^{d} (k)}{1-t}\Big) \right)^2 \\
&\geq d^2-d \sum_{k=1}^{d}  \Big(\frac{\B_t^{d} (k)}t+\frac{\B_{1-t}^{d} (k)}{1-t}\Big)\geq d^2-d  \Big(\frac{1-\B_t^{d} (0)}t+\frac{1-\B_{1-t}^{d} (0)}{1-t}\Big),
\end{align*}
it follows that  for any $t\in (0,1)$
\begin{equation*}
\int_0^1 v_s(d) \,K_t(s)\,ds\geq 
d^2-d \int_0^1 \Big(\frac{1-\B_s^{d} (0)}s+\frac{1-\B_{1-s}^{d} (0)}{1-s}\Big)\,K_t(s)\,ds,
\end{equation*}
with for $d\geq 1$,
\begin{align*}
&\int_0^1   \Big(\frac{1-\B_s^{d} (0)}s+\frac{1-\B_{1-s}^{d} (0)}{1-s}\Big)  \,K_t(s) \,ds
=  2\sum_{k=1}^{d}\left((1-t)^{k-1}+t^{k-1}\right)\left(\frac1{k}-\frac1{d+1}\right)\\
&\leq 
2\left(1+\sum_{k=2}^{d}\frac1k\right)\leq 2 +2 \log{d}.
\end{align*}

For the proof of third lower bound on $C_t(\widehat \pi)$, one uses again \eqref{aller1} and \eqref{aller2} with the concavity of the logarithmic function to obtain
\begin{equation}\label{oouf1}
\liminf_{\gamma_\ell \to 0} \varphi''_{\gamma_\ell }(t)+ \liminf_{\gamma_\ell \to 0} \psi''_{\gamma_\ell }(t) \geq \int \widetilde A^2 \, d\widehat Q_t  +  \int \widetilde B^2 \, d\widehat Q_t.
\end{equation} 
According to the definition of $\widetilde A^2$,
\[\int \widetilde A^2 \, d\widehat Q_t = \sum_{i=1}^n \int   \sum_{z\in E_i^{\leftarrow}(y)}\Big(A_i(z)\,L_i(z)\Big)^2  a_t(z,y) \,d\nu_1(y),
\]
where $E_i^{\leftarrow}(y):=\Big \{z\in\widehat Z^y\,\Big |\,  y\in\widehat Y_{(z,\sigma_i(z))} \Big\}$
for any $y\in \supp(\nu_1)$.
Easy computations give 
\[ \sum_{z\in E_i^{\leftarrow}(y)} A_i(z) \,L_i(z)\,a_t(z,y)=\sum_{z\in E_i^{\leftarrow}(y)}  \,{\mathrm a}_t(z,\sigma_i(z),y)\,L_i(z)
= \Pi^i_\leftarrow(y).\]
and therefore by the Cauchy-Schwarz inequality
\[\int \widetilde A^2 \, d\widehat Q_t  \geq \sum_{i=1}^n  \frac{\Pi^i_\leftarrow(y)^2}{\sum_{z\in E_i^{\leftarrow}(y)} a_t(z,y)}.\]
If $z\in E_i^{\leftarrow}(y)$ then $z_i=y_i$ and therefore 
\[\sum_{z\in E_i^{\leftarrow}(y)} a_t(z,y)\leq 1- \sum_{z\in\widehat Z^y}  \1_{z_i\neq y_i} a_t(z,y).\]
From the definition \eqref{a_t} of $a_t(z,y)$,   
and observing that if $z\in [y,w]$ and $z_i\neq y_i$ then necessarily $z_i=w_i$, one gets 
\[\sum_{z\in\widehat Z^y} \1_{z_i\neq y_i} a_t(z,y)=\sum_{w\in\{0,1\}^n}  \1_{w_i\neq y_i} \left(\sum_{z\in[y,w]} \1_{z_i= w_i} (1-t)^{d(y,z)} t^{d(z,w)}\right) \widehat\pi_{_\leftarrow}(w|y)= (1-t) \sum_{w\in\{0,1\}^n}  \1_{w_i\neq y_i}\widehat\pi_{_\leftarrow}(w|y),   \]
and therefore 
\[\int \widetilde A^2 \, d\widehat Q_t \geq \sum_{i=1}^n \sum_{y\in\{0,1\}^n}  \frac{\Pi^i_\leftarrow(y)^2\,{\nu_1(y)}}{1-(1-t)  \,\Pi^i_\leftarrow(y)}\,.\] 
This inequality   implies (as  in the proof of Theorem \ref{thmcomplete}) 
for any $t\in(0,1)$
 \begin{equation*}
\int \widetilde A^2 \, d\widehat Q_t \geq \xi_\leftarrow''(t),\quad\mbox{with}\quad \xi_\leftarrow(t):= \frac12\int \sum_{i=1}^n h\Big((1-t) \,\Pi^i_\leftarrow(y)\Big) \,d\nu_1(y).
 \end{equation*}
  Similar computations   with the quantity $\int \widetilde B^2 \, d\widehat Q_t $ and \eqref{oouf1}  finally provide 
   \begin{equation*}
  \liminf_{\gamma_\ell \to 0} \varphi''_{\gamma_\ell }(t)+ \liminf_{\gamma_\ell \to 0} \psi''_{\gamma_\ell }(t) \geq   \xi_\leftarrow''(t)+\xi_\rightarrow''(t),
   \end{equation*}
 with
 $\xi_\rightarrow(t):= \frac12\int \sum_{i=1}^n h\Big(t \,  \Pi^i_\rightarrow(x)\Big)\, d\nu_0(x).$
 Following the proof of Theorem \ref{thmcomplete}, the two above estimates  yield the third lower bound of $C_t(\widehat \pi)$.
\end{proof}

\subsubsection{The circle $\Z/N\Z$}

\begin{proof}[Proof of Theorem \ref{thmcircle}] Let us note $n'=\lceil N/2\rceil$ where $\lceil \cdot\rceil$ denotes  the ceiling function. Let $y\in\supp(\nu_1)\subset \Z/N\Z$,  and $z \in \widehat Z^y$.
We observe that if $\{w\in \Z/N\Z\,|\, (z,z-1)\in[y,w]\}\neq \emptyset$ then necessarily $(z-1,z)\in[y+n',y]$
and if  $\{w\in \Z/N\Z\,|\, (z,z+1)\in[y,w]\}\neq \emptyset$ then necessarily $(z,z+1)\in[y,y+n]$. 
As a consequence, since the sets $\{z\in \Z/N\Z\,|\, (z,z+1)\in[y,y+n]\}$ and $\{z\in \Z/N\Z\,|\, (z-1,z)\in[y+n',y]\}$ are disjoints, the sets $\{z\in \widehat Z^y\,| \,y\in \widehat Y_{(z,z+1)} \}$ and $\{z\in \widehat Z^y\,| \,y\in \widehat Y_{(z,z-1)} \}$
are also disjoints. It follows that 
\begin{eqnarray*}
\int \sum_{z\in \widehat Z^y}  \Big(
A_{t}(z,z+1)  + A_{t}(z,z-1)\Big)^2 \,a_t(z,y)\, d\nu_1(y)=\int \sum_{z\in \widehat Z^y} \Big(
A_{t}^2(z,z+1)  + A_{t}^2(z,z-1)\Big) \,a_t(z,y)\, d\nu_1(y).
\end{eqnarray*}
Therefore Theorem  \ref{limdevseconde} together with  \eqref{infbG} provide
\begin{align*}
\liminf_{\gamma_\ell \to 0} \varphi''_{\gamma_\ell }(t)
&\geq \int \sum_{z\in \widehat Z^y} \Big(
A_{t}^2(z,z+1)  + A_{t}^2(z,z-1)\Big) +  \rho\Big(A_t(z,z+1),{\mathbbm A}_t(z,z+2)\Big) \\
&\qquad\qquad\qquad\qquad\qquad\qquad\qquad+ \rho\Big(A_t(z,z-1),{\mathbbm A}_t(z,z-2)\Big)  \,a_t(z,y)\, d\nu_1(y)\\
&\geq 0
\end{align*}
Identically one proves that 
$\displaystyle \liminf_{\gamma_\ell \to 0} \psi''_{\gamma_\ell }(t)\geq 0.$
The proof of Theorem \ref{thmcircle} ends  applying Lemma \ref{lemconvex}.
\end{proof}

\subsubsection{The Bernoulli-Laplace model}

\begin{proof}[Proof of Theorem \ref{thmslicecube}]
One follows the same strategy as for the proof of Theorem \ref{thmcube}. As a first step, the geometric structure of the slices of the cube provides estimates of the lower lower bounds on $\liminf_{\gamma_{\ell}\to 0} \varphi_{\gamma_\ell}''(t)$ and $\liminf_{\gamma_{\ell}\to 0} \psi_{\gamma_\ell}''(t)$ given by Theorem \ref{limdevseconde}. In the second step, one explains how these estimates (namely \eqref{aller1''} and \eqref{aller2''})  imply each of the lower bound on $C_t(\widehat \pi)$ given by  Theorem \ref{thmslicecube}.  

{\bf Step 1 :}  For $z\in \widehat Z$, one defines  the sets 
\[I^{\leftarrow}(z):=\Big\{(i,j)\in J_0(z)\times J_1(z)\,\Big | \, (z,\sigma_{ij}(z))\in C_{_\leftarrow}\Big\},\]
\[I^{\rightarrow}(z):=\Big\{(i,j)\in J_0(z)\times J_1(z)\,\Big | \, (z,\sigma_{ij}(z))\in C_{_\rightarrow}\Big\},\]
\[{\mathbb I}^{\leftarrow}(z):=\Big\{((i,j),(k,l))\in (J_0(z)\times J_1(z))^2\,\Big|\, (i,j)\neq (k,l),\sigma_{kl}\sigma_{ij}(z) \in {\mathbb V}_{_\leftarrow}(z)\Big\},\]
\[{\mathbb I}^{\rightarrow}(z):=\Big\{((i,j),(k,l))\in (J_0(z)\times J_1(z))^2\,\Big|\, (i,j)\neq (k,l),\sigma_{kl}\sigma_{ij}(z) \in {\mathbb V}_{_\rightarrow}(z)\Big\},\]
\[{\mathbb I}_1^\leftarrow(z):=\Big\{(i,j)\in J_0(z)\times J_1(z)\,\Big |\, \exists (k,l)\in J_0(z)\times J_1(z),  ((i,j),(k,l))\in {\mathbb I}^\leftarrow(z)\Big\},\]
\[{\mathbb I}_1^\rightarrow(z):=\Big\{(i,j)\in J_0(z)\times J_1(z)\,\Big |\, \exists (k,l)\in J_0(z)\times J_1(z),  ((i,j),(k,l))\in {\mathbb I}^\rightarrow(z)\Big\},\]
 The sets $I^{\leftarrow}(z)$ and $I^{\rightarrow}(z)$ are  disjoints since $C_{_\leftarrow}\cap C_{_\rightarrow}=\emptyset$.
Obviously  one has ${\mathbb I}_1^{\leftarrow}(z) \subset I^{\leftarrow}(z)$. Observe  that $\sigma_{ki}\sigma_{ij}(z)=\sigma_{kj}(z)$  so that $d(z,\sigma_{ki}\sigma_{ij}(z))=1$ and similarly $d(z,\sigma_{jl}\sigma_{ij}(z))=1$. It follows that if $((i,j),(k,l))\in {\mathbb I}^{\leftarrow}(z)$ or $((i,j),(k,l))\in {\mathbb I}^{\leftarrow}(z)$, then the indices $i,j,k,l$ all differ and $\sigma_{kl}\sigma_{ij}(z)=\sigma_{ij}\sigma_{kl}(z)$.
 As a consequence  one has ${\mathbb I}^{\rightarrow}(z)=\{ ((i,j),(k,l))\, \,|\, \{(i,j),(k,l)\} \subset {\mathbb I}_1^{\leftarrow}(z),i\neq k, j\neq l\}$. 
Same remarks hold with the  sets  $ I^{\rightarrow}(z),{\mathbb I}_1^{\rightarrow}(z),{\mathbb I}^{\rightarrow}(z)$. 
To simplify, one denotes $A_{ij}(z):=A_t(z,\sigma_{ij}(z))$ and $A_{kl,ij}(z):={\mathbbm A}_t(z,\sigma_{kl}\sigma_{ij}(z))$.
After symmetrization, Theorem \ref{limdevseconde} provides
\begin{equation}\label{trop1}
\liminf_{\gamma_\ell \to 0} \varphi''_{\gamma_\ell }(t)\geq \int  \Big(\sum_{(i,j)\in I^{\leftarrow}}
A_{ij}\Big)^2 \,d\widehat Q_t+ \int\sum_{\{(i,j),(k,l)\}\subset {\mathbb I}_1^{\leftarrow}, i\neq k, j\neq l} \Big[\rho\Big(A_{ij},{\mathbbm A}_{kl,ij}\Big)+\rho\Big(A_{ij},{\mathbbm A}_{kl,ij}\Big) \Big] \,d\widehat Q_t.
\end{equation}
Setting ${\mathbb A}:=\sum_{\{(i,j),(k,l)\}\subset {\mathbb I}_1^{\leftarrow}, i\neq k, j\neq l} 2 {\mathbb A}_{kl,ij}$, 
$\beta_{kl,ij}:=\frac{2 {\mathbb A}_{kl,ij}}{\mathbb A}$, 
 according to the definition of the function $\rho$ given in Lemma \ref{deriv2phibis},  easy computations provides
\begin{align}\label{trop2}
\sum_{\{(i,j),(k,l)\}\subset {\mathbb I}_1^{\leftarrow}, i\neq k, j\neq l } &\Big[\rho\Big(A_{ij},{\mathbbm A}_{kl,ij}\Big)+\rho\Big(A_{ij},{\mathbbm A}_{kl,ij}\Big) \Big]
\nonumber\\&={\mathbb A}\log {\mathbb A} -{\mathbb A} + {\mathbb A}\sum_{\{(i,j),(k,l)\}\subset {\mathbb I}_1^{\leftarrow} , i\neq k, j\neq l} \beta_{kl,ij}\log ( \beta_{kl,ij})-{\mathbb A}\sum_{\{(i,j),(k,l)\}\subset {\mathbb I}_1^{\leftarrow}, i\neq k, j\neq l} \log( 2A_{ij}A_{kl}) \,\beta_{ij,kl}\nonumber \\
&\geq {\mathbb A}\log {\mathbb A} -{\mathbb A} - {\mathbb A} \log \sum_{\{(i,j),(k,l)\}\subset {\mathbb I}_1^{\leftarrow}, i\neq k, j\neq l }  2A_{ij}A_{jk},
\end{align}
 where the last inequality follows from the duality formula between the log-Laplace transform and the entropy. 
 For $z\in \widehat Z$, let 
 \[J_0^{\leftarrow}(z):=\{i\in J_0(z)\,|\, \exists j\in J_1(z), (i,j)\in I^{\leftarrow}\},\qquad J_1^{\leftarrow}(z):=\{j\in J_1(z)\,|\, \exists i\in J_1(z), (i,j)\in I^{\leftarrow}\}, \] 
 and let us define identically $J_0^{\rightarrow}(z)$ and $J_1^{\rightarrow}(z)$ by replacing the set $I^{\leftarrow}$ by the $I^{\rightarrow}$.
 If $i\in J_0^{\leftarrow}(z) \cap J_0^{\rightarrow}(z)$ then there exists $j$ and $l$ in $J_1(z)$  such that $(z,\sigma_{ij}(z))$ and $(\sigma_{il}(z), z)$ are points of $C_{_{\rightarrow}}$. According to Lemma \ref{opt} $i)$, this is impossible since $d(\sigma_{ij}(z), \sigma_{il}(z))\leq 1$. It follows that $J_0^{\leftarrow}(z)\cap J_0^{\rightarrow}(z)=\emptyset$ and identically one proves that $J_1^{\leftarrow}(z) \cap J_1^{\rightarrow}(z)=\emptyset$.
 Let $A:=\sum_{(i,j)\in I^{\leftarrow}} A_{ij}$. Since ${\mathbb I}_1^{\leftarrow}\subset { I}^{\leftarrow}$, one  checks that 
\begin{align*}
\sum_{\{(i,j),(k,l)\}\subset {\mathbb I}_1^{\leftarrow} , i\neq k, j\neq l}  2A_{ij}A_{jk}&\leq \sum_{((i,j), (k,l))\in{ I}^{\leftarrow} \times { I}^{\leftarrow} , i\neq k, j\neq l} A_{ij}A_{kl}\\
&=A^2 + \sum_{(i,j)\in I^{\leftarrow}} A_{ij}^2 -\sum_{i\in J_0^{\leftarrow}}\Big(\sum_{j\in J_1^{\leftarrow}} A_{ij}\Big)^2-\sum_{j\in J_1^{\leftarrow}}\Big(\sum_{i\in J_0^{\leftarrow}} A_{ij}\Big)^2.
\end{align*}
Therefore, setting 
\[ {\widetilde A}^2:= \sum_{i\in J_0^{\leftarrow}}\Big(\sum_{j\in J_1^{\leftarrow}} A_{ij}\Big)^2+\sum_{j\in J_1^{\leftarrow}}\Big(\sum_{i\in J_0^{\leftarrow}} A_{ij}\Big)^2-\sum_{(i,j)\in I^{\leftarrow}} A_{ij} ^2,\]
\eqref{trop1} and \eqref{trop2}
imply
 \begin{equation}\label{aller1''}
 \liminf_{\gamma_\ell \to 0} \varphi''_{\gamma_\ell }(t)\geq \int \left[A^2 -{\mathbb A} + {\mathbb A}\left(\log {\mathbb A}-\log\left(A^2-\widetilde A^2\right)\right) \right] d\widehat Q_t.
 \end{equation}

 Identically,  the lower-bound of $ \liminf_{\gamma_\ell \to 0} \psi''_{\gamma_\ell }(t)$
  given by Theorem \ref{limdevseconde} provides
  \begin{equation}\label{aller2''}
 \liminf_{\gamma_\ell \to 0} \psi''_{\gamma_\ell }(t)\geq \int \left[B^2 -{\mathbb B} + {\mathbb B}\left(\log {\mathbb B}-\log\left(B^2-\widetilde B^2\right)\right) \right] d\widehat Q_t,
 \end{equation}
 where we set for any $z\in \widehat Z$, $B(z):=\sum_{(i,j)\in I^{\rightarrow}} B_t(z,\sigma_{ij}(z))$, ${\mathbb B}(z):=\sum_{\{(i,j),(k,l)\}\subset {\mathbb I}_1^{\rightarrow}, i\neq k, j\neq l} 2 {\mathbb B}_t(z, \sigma_{kl}\sigma_{ij}(z))$ and 
 \[ {\widetilde B}^2(z):= \sum_{i\in J_0^{\rightarrow}}\Big(\sum_{j\in J_1^{\rightarrow}} B_t(z,\sigma_{ij}(z))\Big)^2+\sum_{j\in J_1^{\rightarrow}}\Big(\sum_{i\in J_0^{\rightarrow}} B_t(z,\sigma_{ij}(z))\Big)^2-\sum_{(i,j)\in I^{\rightarrow}} B_t(z,\sigma_{ij}(z)) ^2.\]
 
 {\bf Step 2 :} By the Cauchy-Schwarz inequality, one has 
 \[ {\widetilde A}^2 \geq \max\Big[\sum_{i\in J_0^{\leftarrow}}\Big(\sum_{j\in J_1^{\leftarrow}} A_{ij}\Big)^2, \sum_{j\in J_1^{\leftarrow}}\Big(\sum_{i\in J_0^{\leftarrow}} A_{ij}\Big)^2\Big] \geq  \max\Big[\frac1{|J_0^{\leftarrow}|}, \frac1{|J_1^{\leftarrow}|}\Big] \,A^2,\]
 and therefore, \eqref{aller1''} together with the concavity property of the logarithmic function yield
\begin{equation}\label{aller1'''}
 \liminf_{\gamma_\ell \to 0} \varphi''_{\gamma_\ell }(t)\geq  \int \max\Big[\frac1{|J_0^{\leftarrow}|}, \frac1{|J_1^{\leftarrow}|}\Big] \max \left\{ A^2,  {\mathbb A} \right\} d\widehat Q_t.
 \end{equation}
Identically \eqref{aller2''} gives 
 \begin{equation}\label{aller2'''}
 \liminf_{\gamma_\ell \to 0} \psi''_{\gamma_\ell }(t)\geq  \int  \max\Big[\frac1{|J_0^{\rightarrow}|}, \frac1{|J_1^{\rightarrow}|}\Big] \max \left\{ B^2,  {\mathbb B} \right\}d\widehat Q_t.  
 \end{equation}

Keeping  the quantities involving $A$ and $B$ in \eqref{aller1'''} and \eqref{aller2'''}, and applying Cauchy-Schwarz inequality, the identities \eqref{sumA} and \eqref{sumB} yield 
 \[\liminf_{\gamma_\ell \to 0} \varphi''_{\gamma_\ell }(t)+ \liminf_{\gamma_\ell \to 0} \psi''_{\gamma_\ell }(t)\geq \, W_1^2(\nu_0,\nu_1) \left[\frac{1}{ \int \min\Big[|J_0^{\leftarrow}|, |J_1^{\leftarrow}|\Big]  \, d\widehat Q_t} +\frac{1}{ \int \min\Big[|J_0^{\rightarrow}|, |J_1^{\rightarrow}|\Big] \, d\widehat Q_t} \right].\]
 Since $J_0^{\leftarrow}\cap J_0^{\rightarrow}=\emptyset$ and $J_1^{\leftarrow}\cap J_1^{\rightarrow}=\emptyset$, one has 
 \begin{equation}\label{2004}
 \min\Big[|J_0^{\leftarrow}|, |J_1^{\leftarrow}|\Big] +\min\Big[|J_0^{\rightarrow}|, |J_1^{\rightarrow}|\Big] \leq \min\Big[ |J_0^{\leftarrow}|+ |J_0^{\rightarrow}|,  |J_1^{\leftarrow}|+|J_1^{\rightarrow}|\Big] \leq \min[n-\kappa, \kappa].
 \end{equation}
and therefore  the identity  $\min_{\alpha,\beta>0, \alpha+\beta\leq 1}\left\{\frac{1}\alpha+\frac{1}\beta\right\}=4$ implies
 \[\liminf_{\gamma_\ell \to 0} \varphi''_{\gamma_\ell }(t)+ \liminf_{\gamma_\ell \to 0} \psi''_{\gamma_\ell }(t)\geq \frac{4}{\min[n-\kappa, \kappa]}\,  W_1^2(\nu_0,\nu_1).\]
The first lower bound of $C_t(\widehat \pi)$ in Theorem \ref{thmslicecube} then follows applying Lemma \ref{lemconvex}.

Keeping only the quantities involving ${\mathbb A} $ and ${\mathbb B} $ in \eqref{aller1'''} and \eqref{aller2'''} gives 
\begin{equation}\label{oouf'}
 \liminf_{\gamma_\ell \to 0} \varphi''_{\gamma_\ell }(t)+ \liminf_{\gamma_\ell \to 0} \psi''_{\gamma_\ell }(t)\geq   \int  \frac{\mathbb A}{\min\Big[|J_0^{\leftarrow}|, |J_1^{\leftarrow}|\Big]  } +\frac{{\mathbb B} }{ \min\Big[|J_0^{\rightarrow}|, |J_1^{\rightarrow}|\Big]}\, d\widehat Q_t .
 \end{equation}
 According to   Lemma \ref{lemintaibis} one has for any $z\in\widehat Z$,  $y\in\widehat Y_z$, and  for any $((i,j),(k,l))\in {\mathbb I}^\leftarrow(z)$, 
 \[ 
 2\displaystyle{\mathbb A}_{kl,ij}(z) :=\frac{{\mathbbm a}_t(z,\sigma_{kl}\sigma_{ij}(z),y) L(z,\sigma_{kl}\sigma_{ij}(z)) }{a_t(z,y)}.
 \]
Therefore the expression \eqref{a_t''} of  ${\mathbbm a}_t(z,\sigma_{kl}\sigma_{ij}(z),y)$ with  the identity 
\[\sum_{\{(i,j),(k,l)\}\}\subset{\mathbb I}_1^{\leftarrow}, i\neq k,j\neq l }  r(y,z,\sigma_{kl}\sigma_{ij}z),w) L^2(z,\sigma_{kl}\sigma_{ij}(z)) = r(y,z,z,w),\]
give
\begin{align*}
{\mathbb A}(z) &=\sum_{\{(i,j),(k,l)\}\subset {\mathbb I}_1^{\leftarrow}, i\neq k, j\neq l} 2 {\mathbb A}_{kl,ij}(z)\\
&=\frac{1}{a_t(z,y)} \sum_{w\in \X} \sum_{z,z\in [y,w]} r(y,z,z,w) d(y,w)(d(y,w)-1)\, \B_t^{d(y,w)-2}(d(z,w)-2)\,\widehat{\pi}_{_\leftarrow}(w|y).
\end{align*}
Working identically with ${\mathbb B}(z)$ we finally get  
\begin{equation*}
  \int  \frac{\mathbb A}{\min\Big[|J_0^{\leftarrow}|, |J_1^{\leftarrow}|\Big]  } +\frac{{\mathbb B} }{ \min\Big[|J_0^{\rightarrow}|, |J_1^{\rightarrow}|\Big]}\, d\widehat Q_t \geq \iint c_t(x,y) \,d\widehat{\pi}(x,y),
 \end{equation*}
where 
\begin{multline*}
c_t(x,y)=\sum_{z\in[x,y]}  \frac1{\min\Big[|J_0^{\leftarrow}(z)|, |J_1^{\leftarrow}(z)|\Big] }\,r(x,z,z,y) d(x,y)(d(x,y)-1)\, \B_t^{d(x,y)-2}(d(x,z)-2) \\
+\sum_{z\in[x,y]}  \frac1{\min\Big[|J_0^{\rightarrow}(z)|, |J_1^{\rightarrow}(z)|\Big]} r(x,z,z,y) d(x,y)(d(x,y)-1)\, \B_t^{d(x,y)-2}(d(x,z)).
\end{multline*}
Using the inequality \eqref{2004}, the end of the proof of the second lower bound of $C_t(\widehat \pi)$ involving $T_{c_2}(\widehat \pi)$ is exactly the same as in the proof Theorem \ref{thmcube}. It is left to the reader.

We now turn to the  proof of third lower bound on $C_t(\widehat \pi)$. Using again \eqref{aller1''} and \eqref{aller2''} and the concavity of the logarithmic function, one gets 
\begin{equation}\label{oouf1'}
\liminf_{\gamma_\ell \to 0} \varphi''_{\gamma_\ell }(t)+ \liminf_{\gamma_\ell \to 0} \psi''_{\gamma_\ell }(t) \geq \int \widetilde A^2 \, d\widehat Q_t  +  \int \widetilde B^2 \, d\widehat Q_t.
\end{equation} 
According to the definition of $\widetilde A^2$, one has
\begin{align}\label{IHP'}\int \widetilde A^2 \, d\widehat Q_t &\geq \int  \max\Big[  \sum_{i\in J_0^\leftarrow}\Big(\sum_{j\in J_1^\leftarrow} A_{ij}\Big)^2, \sum_{j\in J_1^\leftarrow}\Big(\sum_{i\in J_0^\leftarrow} A_{ij}\Big)^2\Big]\,d\widehat Q_t
\nonumber\\
&= \int \sum_{z\in \widehat Z^y} \max\Big[ \sum_{i\in [n]}\Big(
\sum_{j\in [n]}  A_{ij}(z)\1_{(i,j)\in I^\leftarrow(z)}\Big)^2, \sum_{j\in [n]}\Big(
\sum_{i\in [n]}  A_{ij}(z)\1_{(i,j)\in I^\leftarrow(z)}\Big )^2\Big]  a_t(z,y) \,d\nu_1(y)\nonumber \\
&\geq \int \max\Big[\sum_{z\in \widehat Z^y}  \sum_{i\in [n]}\Big(
\sum_{j\in [n]}  A_{ij}(z)\1_{(i,j)\in I^\leftarrow(z)}\Big)^2 a_t(z,y), \sum_{z\in \widehat Z^y} \sum_{j\in [n]}\Big(
\sum_{i\in [n]}  A_{ij}(z)\1_{(i,j)\in I^\leftarrow(z)}\Big )^2a_t(z,y)\Big]  \,d\nu_1(y)
\end{align}
For any $y\in \supp(\nu_1)$, and any $i\in J_0(y)$, $j\in J_1(y)$ we note 
\[E_{i,0}^{\leftarrow}(y):=\Big \{z\in \X_\kappa\Big |\, \exists l \in J_1(y),  y \in \widehat Y_{(z,\sigma_{il}(z))} \Big\},
\qquad E_{j,1}^{\leftarrow}(y):=\Big \{z\in \X_\kappa\Big |\, \exists k\in J_0(y),  \in \widehat Y_{(z,\sigma_{kj}(z))} \Big\}.
\]
Since $(i,j)\in I^{\leftarrow}(z)$ and $z\in\widehat Z^y$  imply $z\in E_{i,0}^{\leftarrow}(y)$ and $z\in E_{j,1}^{\leftarrow}(y)$, one has 
\[
\sum_{z\in \widehat Z^y} \sum_{i\in [n]}\Big(
\sum_{j\in [n]}  A_{ij}(z)\1_{(i,j)\in I^\leftarrow(z)}\Big)^2\, a_t(z,y)
 = \sum_{i\in J_0(y)}\sum_{z\in E_{i,0}^{\leftarrow}(y)}\Big(
\sum_{j\in  J_1(y)}  A_{ij}(z)\Big)^2 \,a_t(z,y) ,
\]
and therefore by Cauchy-Schwarz inequality,
\begin{align}\label{martinkine}
 \sum_{z\in \widehat Z^y} \sum_{i\in [n]}\Big(
\sum_{j\in [n]}  A_{ij}(z)\1_{(i,j)\in I^\leftarrow(z)}\Big)^2\, a_t(z,y) 
 \geq   \sum_{i\in J_0(y)} \frac {\left(
\sum_{j\in  J_1(y)} \sum_{z\in E_{i,0}^{\leftarrow}(y)} A_{ij}(z) a_t(z,y)\right)^2} {\sum_{z\in E_{i,0}^{\leftarrow}(y)} a_t(z,y)} .
\end{align}
For $(i,j)\in J_0(y)\times J_1(y)$, one may compute the quantity 
$\sum_{z\in E_{i,0}^{\leftarrow}(y)} A_{ij}(z) a_t(z,y)$
 using the two following observations. First  $(z,\sigma_{ij}(z))\in [y,w]$ holds  if and only if one has $y_i=z_i=w_j=0$, $y_j=z_j=w_i=1$ and $z\in [y, \sigma_{ij}(w)]$. Secondly, the generator $L$ is translation invariant which implies for any $(z,\sigma_{ij}(z))\in[y,w]$, \[r(y,z,\sigma_{ij}(z),w)=r(y,z,z,\sigma_{ij}(w))\,\frac{  L^{d(y,\sigma_{ij}(w))}(y,\sigma_{ij}(w))}{L^{d(y,w)}(y,w)}.\] Therefore, using \eqref{a_t'}, one gets for any $(i,j)\in J_0(y)\times J_1(y)$,
\begin{align*}
&\sum_{z\in E_{i,0}^{\leftarrow}(y)} A_{ij}(z)\,a_t(z,y)=\sum_{z\in \X_\kappa} {\mathrm a}_t(z,\sigma_{ij}(z),y)\\
&= \sum_{w\in \X_\kappa} \1_{y_i=w_j=0}  \1_{y_j=w_i=1}\!\!\!\!\!\!\! \sum_{s=0}^{d(y,\sigma_{ij}(w))} \sum_{z\in [y,\sigma_{ij}(w)],d(y,z)=s}  \!\!\!\!\!\!\! 
r(y,z,z,\sigma_{ij}(w))\frac{  L^{d(y,\sigma_{ij}(w))}(y,\sigma_{ij}(w))}{L^{d(y,w)}(y,w)}\,\\
&\qquad\qquad\qquad\qquad\qquad\qquad\quad\qquad\qquad  \qquad d(y,w) \B_t^{d(y,w)-1}(d(y,w)-1-s)
\,\widehat{\pi}_{_\leftarrow}(w|y)
\\&=\sum_{w\in \X_\kappa} \1_{y_i=w_j=0}  \1_{y_j=w_i=1}\frac {L^{d(y,\sigma_{ij}(w))}(y,\sigma_{ij}(w))}{L^{d(y,w)}(y,w)} \,d(y,w)\,\widehat{\pi}_{_\leftarrow}(w|y)
\\&= \sum_{w\in \X_\kappa} \frac{\1_{y_i=w_j=0}  \1_{y_j=w_i=1}}{d(y,w)}\,\widehat{\pi}_{_\leftarrow}(w|y),
\end{align*}
where the last equality holds since $L^{d(x,y)}(x,y)=(d(x,y)!)^2$ for any $x,y\in \X_\kappa$.
Since for $i\in J_0(y)$, $\displaystyle \sum_{j\in J_1(y)} \1_{y_i=w_j=0}  \1_{y_j=w_i=1}= d(y,w)\, \1_{w_i\neq y_i}$, it follows that
\begin{equation}\label{IHP}
\sum_{j\in J_1(y)}\sum_{z\in E_{i,0}^{\leftarrow}(y)} A_{ij}(z)\,a_t(z,y)
=  \sum_{w\in \X_\kappa}  \1_{w_i\neq y_i} \,\widehat{\pi}_{_\leftarrow}(w|y).
\end{equation}
Similar computations also provide, for any $i\in J_0(y)$,  
 \begin{align*}
& \sum_{z\in E_{i,0}^{\leftarrow}(y)} a_t(z,y)=\sum_{w\in \X_\kappa} \sum _{z\in[y,w]}  \1_{z\in E_{i,0}^{\leftarrow}(y)} {Q_t}^{w,y}(z) \,\widehat{\pi}_{_\leftarrow}(w|y)\\
& \leq \sum_{w\in \X_\kappa} \sum _{z\in[y,w]}  \1_{z_i=y_i=0} {Q_t}^{w,y}(z) \,\widehat{\pi}_{_\leftarrow}(w|y)\\
 &= \sum_{w\in \X_\kappa}  \1_{y_i=w_i=0} \,\widehat{\pi}_{_\leftarrow}(w|y) + \sum_{w\in \X_\kappa}  \1_{y_i\neq w_i} \Big(\sum _{z\in[y,w]}  \1_{z_i=y_i=0} {Q_t}^{w,y}(z)\Big)\,\widehat{\pi}_{_\leftarrow}(w|y).
 \end{align*}
 Moreover from the expression of ${Q_t}^{w,y}(z)$ given by \eqref{nutsectioncube}, one has  for  $y_i=0$ and $w_i=1$,  
 \begin{align*}
\sum _{z\in[y,w]}  \1_{z_i=y_i=0} {Q_t}^{w,y}(z)&=\sum_{k=0}^{d(y,w)-1} \Big(\sum_{z, z\in [y,w], z_i=0} \1_{d(y,z)=k}\Big) \,\frac{(1-t)^k t^{d(y,w)-k}}{\binom{d(y,w)}{k}}\\
&=\sum_{k=0}^{d(y,w)-1}  \binom{d(y,w)}{k}\binom{d(y,w)-1}{k} \,  \frac{(1-t)^k t^{d(y,w)-k}}{\binom{d(y,w)}{k}}\\
&=t.
\end{align*}
It follows that  for any $i\in J_0(y)$
\[ \sum_{z\in E_{i,0}^{\leftarrow}(y)} a_t(z,y)\leq 1-(1-t)\int \1_{y_i\neq w_i} d\widehat{\pi}_{_\leftarrow}(w|y).\]
As a consequence, since $\Pi_{\leftarrow}^i(y) :=\int  \1_{y_i\neq w_i} d\widehat{\pi}_{_\leftarrow}(w|y)$, \eqref{martinkine} and \eqref{IHP} implies
\begin{equation*}
 \sum_{z\in \widehat Z^y} \sum_{i\in [n]}\Big(
\sum_{j\in [n]}  A_{ij}(z)\1_{(i,j)\in I^\leftarrow(z)}\Big)^2\, a_t(z,y) 
\geq   \sum_{i\in J_0(y)} \frac {\Pi_{\leftarrow}^i(y) ^2} {1-(1-t) \Pi_{\leftarrow}^i(y)}.
\end{equation*}
By symmetry, the same inequality holds exchanging the role of $i $ and $j$, and therefore \eqref{IHP'} gives 
\[\int \widetilde A^2 \, d\widehat Q_t \geq \int \max\Big[   \sum_{i\in J_0(y)} \frac {\Pi_{\leftarrow}^i(y) ^2} {1-(1-t) \Pi_{\leftarrow}^i(y)},  \sum_{j\in J_1(y)} \frac {\Pi_{\leftarrow}^j(y) ^2} {1-(1-t) \Pi_{\leftarrow}^j(y)}\Big] \,d\nu_1(y).\]
As  in the proof of Theorem \ref{thmcomplete}, this inequality  implies for any $t\in(0,1)$
 \begin{equation*}
\int \widetilde A^2 \, d\widehat Q_t \geq \xi_\leftarrow''(t),
\end{equation*}
with
\begin{equation*}
\xi_\leftarrow(t) :=\frac12 \int  \max\Big[\sum_{i\in J_0(y)} h\Big((1-t)\Pi^i_\leftarrow(y) \Big), \sum_{j\in J_1(y)}   h\Big((1-t)\Pi^j_\leftarrow(y) \Big)\Big] \,d\nu_1(y).
 \end{equation*}
Identically, one proves that 
 \begin{equation*}
\int \widetilde B^2 \, d\widehat Q_t \geq \xi_\rightarrow''(t),
\end{equation*}
where 
\[\xi_\rightarrow(t):=\frac12\int \max\Big[\sum_{i\in J_0(x)}  \,h\Big(t  \,\Pi^i_\rightarrow(x)\Big),\sum_{j\in J_1(x)} \,h\Big(t  \,\Pi^j_\rightarrow(x)\Big) \Big]\,d\nu_0(x).\]
From \eqref{oouf1'} and  the two last estimates, applying Lemma \ref{lemconvex} provides the third lower bound of $C_t(\widehat \pi)$ in Theorem \ref{thmslicecube}. 
\end{proof}

%%%%%%%%%%%%%%%%%%%%%%%%%%%%%%%
% Appendix A
%%%%%%%%%%%%%%%%%%%%%%%%%%%%%%%

\section{Appendix A : Basic lemmas}\label{appA}
\begin{lemma}\label{cubegauss}
The transport-entropy inequality \eqref{talcub} implies  the $W_2$ transport-entropy inequality \eqref{tal} for the standard Gaussian measure $\gamma$.
\end{lemma}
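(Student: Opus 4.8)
The plan is to run a central limit theorem argument on the inequality \eqref{talcub} applied to the hypercube $\{0,1\}^n$ equipped with the \emph{uniform} product measure $\mu=(\mathrm{Ber}(1/2))^{\otimes n}$, after an appropriate rescaling. First I would recall that under $\mu$, the coordinates $z_i$ are i.i.d.\ fair coins, and the normalized sum $S_n(z):=\frac1{\sqrt n}\sum_{i=1}^n(2z_i-1)$ converges in law to the standard Gaussian $\gamma$ on $\R$. Given a smooth, compactly supported (or bounded Lipschitz) probability density $f$ with respect to $\gamma$ on $\R$, define a probability measure $\nu^{(n)}$ on $\{0,1\}^n$ by $d\nu^{(n)}(z):=f(S_n(z))\,d\mu(z)$ (up to the normalizing constant $\int f(S_n)\,d\mu\to\int f\,d\gamma=1$). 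The measure $\nu_0=\nu^{(n)}$ and $\nu_1=\mu$ will be the two measures plugged into \eqref{talcub}.

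The core of the argument is to track the two sides of \eqref{talcub}, namely
\[
\frac1n\bigl(W_2^2(\nu_0,\nu_1)-W_1(\nu_0,\nu_1)\bigr)\le\Bigl(\sqrt{H(\nu_0|\mu)}+\sqrt{H(\nu_1|\mu)}\Bigr)^2,
\]
with $\nu_1=\mu$ so that $H(\nu_1|\mu)=0$. For the entropy term on the right: $H(\nu^{(n)}|\mu)=\int f(S_n)\log f(S_n)\,d\mu\to\int f\log f\,d\gamma=H(f\gamma|\gamma)$ by the CLT and boundedness of $f\log f$. For the transport term on the left, I would use the Hamming-distance structure: the map $z\mapsto S_n(z)$ is $\tfrac{2}{\sqrt n}$-Lipschitz from $(\{0,1\}^n,d_{\mathrm{Hamming}})$ to $(\R,|\cdot|)$, since flipping one coordinate changes $S_n$ by $2/\sqrt n$. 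Hence for any coupling $\pi$ of $\nu_0,\nu_1$ on $\{0,1\}^n$, pushing forward by $(S_n,S_n)$ gives a coupling of $(S_n\#\nu_0, S_n\#\nu_1)$ with $\iint|S_n(x)-S_n(y)|^2 d\pi\le\frac4n\iint d(x,y)^2 d\pi$, i.e.\ $\frac4n W_2^2(\nu_0,\nu_1)\ge W_2^2(S_n\#\nu_0,S_n\#\nu_1)$. Similarly $\frac1n W_1(\nu_0,\nu_1)$ must be shown to be negligible: since $d(x,y)\le n$ one has $\frac1n W_1\le\frac1n\cdot\frac{\sqrt n}{2}W_1(S_n\#\cdots)\cdot$\,(constant)\,$\to 0$ because $W_1(S_n\#\nu_0,\gamma)\to 0$ while the prefactor is $O(n^{-1/2})$; more carefully, $W_1(\nu_0,\nu_1)\le n$ always, so $\tfrac1n W_1(\nu_0,\nu_1)\le 1$, and one needs the sharper bound $W_1(\nu_0,\nu_1)\le \tfrac{\sqrt n}{2}W_1(S_n\#\nu_0,S_n\#\nu_1)+o(\sqrt n)$ — actually I would instead argue directly that $\tfrac1n W_1(\nu_0,\nu_1)=\tfrac1n O(\sqrt n\cdot o(1))=o(1)$ using that on the hypercube $W_1$ between two product-type measures with the same "profile" is $O(\sqrt n)$. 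Then, since $S_n\#\nu_0\to f\gamma$ and $S_n\#\nu_1=S_n\#\mu\to\gamma$ weakly (and with second moments converging, by a uniform integrability check using the sub-Gaussianity of $S_n$ under $\mu$), lower semicontinuity of $W_2$ gives $\liminf_n W_2^2(S_n\#\nu_0,S_n\#\nu_1)\ge W_2^2(f\gamma,\gamma)$.

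Putting these together and passing to the limit $n\to\infty$ in \eqref{talcub} yields $W_2^2(f\gamma,\gamma)\le 4\,H(f\gamma|\gamma)$, which is \eqref{tal} with constant $4$ instead of $2$, first for a dense class of measures $\nu=f\gamma$ and then for all $\nu\in\Pc_2(\R)$ absolutely continuous with respect to $\gamma$ by a standard approximation/lower-semicontinuity argument (approximate $f$ by bounded smooth densities, using that both $W_2^2(\cdot,\gamma)$ is lower semicontinuous and $H(\cdot|\gamma)$ can be controlled along the approximation). The main obstacle I anticipate is the careful handling of the $W_1$ correction term: one must verify that $\tfrac1n W_1(\nu_0,\nu_1)\to 0$ rather than merely staying bounded, which requires either a genuine upper bound $W_1(\nu_0,\nu_1)=O(\sqrt n)$ on the hypercube for these measures (provable by an explicit coupling that only flips $O(\sqrt n)$ coordinates on average, matching the $\Theta(\sqrt n)$ spread of $\sum(2z_i-1)$), or a direct comparison argument. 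The secondary technical point is the uniform integrability ensuring convergence of second moments so that weak convergence upgrades to $W_2$-convergence; this follows from the exponential moment bound $\int e^{\lambda S_n}\,d\mu\le e^{\lambda^2/2}$ valid for all $n$.
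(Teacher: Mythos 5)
Your plan is correct and follows essentially the same route as the paper: the same projection (your $S_n$ is the paper's $T_n$), the same reweighted measures with density $f(T_n)$ with respect to the uniform $\mu$, the CLT for the entropy term, and the $\tfrac{2}{\sqrt n}$-Lipschitz cost comparison producing the factor $\tfrac14$ and hence the constant $4$. The only divergence is in disposing of the subtracted lower-order term: you keep it on the hypercube and need $W_1(\nu^{(n)},\mu)=O(\sqrt n)$, which your proposed coupling does give because both measures are exchangeable (functions of the magnetization only), so the cube $W_1$ reduces to the one-dimensional $W_1$ between the laws of $\sum_i z_i$, i.e.\ $\tfrac{\sqrt n}{2}\,W_1(S_n\#\nu^{(n)},S_n\#\mu)=O(\sqrt n)$; the paper instead pushes the whole cost $d(d-1)$ forward to $\R$, obtaining the cost $|z-w|\bigl(|z-w|-\tfrac{2}{\sqrt n}\bigr)$, and removes the $\tfrac{2}{\sqrt n}$ correction by a uniform-integrability argument on a compact set.
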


\begin{proof} The result follows from the  transport-entropy inequality \eqref{talcub} for the uniform probability measure $\mu$ on the hypercube  ($\alpha_i=1/2$ for all $i\in[n]$), and by using the  central limit Theorem with the projection map  
\[T_n(x):=\frac{2}{\sqrt n} \Big(\sum_{i=1}^n x_i-\frac n2\Big), \quad x,y\in \{0,1\}^n.\]

By density, it is sufficient to prove \eqref{tal}  for any probability measure $\nu$ on $\R$ with  continuous density $f$ and compact support $K$. Let $\nu^n$ denotes the probability measure on $\{0,1\}^n$  with density $f_n$  with respect to $\mu$ given by 
\[ f_n(x):= \frac {f(T_n(x))} {\int f\circ T_n \,d\mu},\qquad x\in\{0,1\}^n.\]
Applying \eqref{talcub} with $\nu_0:=\mu$ and $\nu_1:= \nu^n$, one gets 
\begin{eqnarray}\label{talapp}
\frac2n {T_{c_2} }(\mu,\nu^n)\leq  H(\nu^n|\mu).
\end{eqnarray}
By the weak convergence of $T_n \#\mu$ to the standard Gaussian law $\gamma$, one has 
\begin{eqnarray}\label{liment}
\lim_{n\to \infty} H(\nu^n |\mu)= H(\nu|\gamma),
\end{eqnarray}
and for $k=1$ or $k=2$,
\begin{eqnarray}\label{limmom}
\lim_{n\to \infty} \int |w|^k \, d(T_n\#\nu^n)(w)=\lim_{n\to \infty} \int |T_n(x)|^k \, f_n(T_n(x)) \,d\mu(x)= \int |w|^k \,d\nu(w).
\end{eqnarray} 
Since $ d(x,y)\geq \frac{\sqrt n}2 \left|T_n(x)-T_n(y)\right|$ and the monotonicity property of the function  $c_2:\R\to \R^+$ on $[2,+\infty)$ implies
\[\frac2n\, c_2(d(x,y))\geq \frac2n c_2\left(\frac{\sqrt n}2 \left|T_n(x)-T_n(y)\right| \right)\1_{\frac{\sqrt n}2 \left|T_n(x)-T_n(y)\right|\geq 2},\]
and therefore 
\[\frac2n\,{ T_{c_2}}(\mu,\nu^n)^2\geq \frac12 \inf_{\pi_n \in\Pi(T_n\#\mu,T_n\#\nu^n)}\iint 
c_n(z,w) \,d\pi_n(z,w),\]
where for any $z,w\in \R$
\begin{align*}
c_n(z,w)&:=  4n c_2\left(\frac{\sqrt n\,|z-w|}2\right)\1_{ |z-w|\geq 4/\!\sqrt n}\\ &=\left[|z-w|^2-\frac{4(1+\log(\sqrt n /2))}{\sqrt n}\,|z-w| - \frac{4}{\sqrt n} \, |z-w|\log|z-w|\right]\1_{|z-w|\geq 4/\!\sqrt n} .
\end{align*}
Let $c(z,w):=|z-w|^2$, $z,w\in \R$. One has, for any $z,w\in \R$, $c(z,w)\geq c_n(z,w)$ and  
\begin{align*}
c(z,w)-c_n(z,w)&= |z-w|^2\1_{|z-w|< 4/\!\sqrt n} +\left[ \frac{4(1+\log(\sqrt n /2))}{\sqrt n}\,|z-w| + \frac{4}{\sqrt n} \, |z-w|\log|z-w|\right] \,1_{|z-w|\geq 4/\!\sqrt n}\\
&
\leq \frac{16}n + \frac{4(1+\log n)}{\sqrt{n}} \left[|z|+|w| +1+2|z|^2+2|w|^2\right],
\end{align*}
where the last inequality follows from  $|u\log u|\leq 1+u^2, u>0$.
Since 
\[\int |z|\,d(T_n \#\mu)(z)\leq \left(\int |z|^2 d(T_n \#\mu)(z)\right)^{1/2}= \left(\int T_n^2 d\mu\right)^{1/2}=1,\]
it follows that for any $\pi_n \in\Pi(T_n\#\mu,T_n\#\nu^n)$,
\begin{align*}
\iint c_n \,d\pi_n 
&\geq \iint c\, d\pi_n-\frac{16}n - \frac{4(1+\log n)}{\sqrt{n}}  \iint \left[|z|+|w| +1+2|z|^2+2|w|^2\right]  \,d\pi_n(z,w)\\
&\geq  \iint c\, d\pi_n - \frac{32(1+\log n)}{\sqrt{n}} \left[ 1+ \int |w| \, d(T_n\#\nu^n)(w) +\int |w|^2 \, d(T_n\#\nu^n)(w)\right],
\end{align*}
and therefore 
\begin{equation*}
\frac2n\,{T_{c_2} }(\mu,\nu^n)^2\geq \frac12\,W^2_2(T_n\#\mu,T_n\#\nu^n)  - \frac{16(1+\log n)}{\sqrt{n}}\left[1+  \int |w| \, d(T_n\#\nu^n)(w) +\int |w|^2 \, d(T_n\#\nu^n)(w)\right] .
\end{equation*}
From the weak convergence in $\Pc_2(\R)$ of the sequences $(T_n\#\mu)$ and $(T_n\#\nu^n)$ and using   \eqref{limmom}, the last inequality implies as $n$ goes to infinity
\[\liminf_{n\to +\infty} \frac2n\,{T_{c_2} }(\mu,\nu^n)\geq \frac12 \,W_2^2(\nu,\gamma).\]
Finally, Talagrand's inequality $W_2^2(\nu,\gamma)\leq 2 H(\nu|\gamma)$, follows from \eqref{talapp} and  \eqref{liment}.
\end{proof}

\begin{lemma}\label{logsobcube}  If the convexity property  \eqref{deplacebis} holds, then for any $\nu_0,\nu_1\in \Pb_b(\X)$, 
\begin{equation*}
H(\nu_0|\mu)\leq H(\nu_1|\mu) +\sum_{x\in \X}\sum_{x'\in\X, x'\sim x}   \left(\log(f(x)-\log f(x')\right)\, \Pi^{x'}_\rightarrow(x)  \,\nu_0(x)  
-\frac12\liminf_{t\to 0} C_t(\widehat \pi).
\end{equation*}
where $\Pi^{x'}_\rightarrow(x):=\int \1_{x'\in [x,y]} d(x,y) r(x,x',x',y) \, d\widehat \pi_\rightarrow(y|x)$.  
\end{lemma}
\begin{proof}  
The convexity property  \eqref{deplacebis} implies, for any  $\nu_0,\nu_1\in\Pb_b(\X)$ and for any $t\in(0,1)$
\begin{equation}\label{tata} 
H(\nu_0|\mu)\leq H(\nu_1|\mu) -\frac{H(\widehat Q_t|\mu)-H(\nu_0|\mu)}{t} - \frac{(1-t)}2 C_t(\widehat \pi).
\end{equation}
The first step is to compute the left-hand side of this inequality as $t$ goes to zero. 
According to the expression \eqref{pontxycube} of ${Q_t}\!^{ x,y}$, for any $x,y,z\in\{0,1\}^n$,
\[\partial_t {Q_t}\!^{ x,y}(z)=r(x,z,z,y)\,\binom{d(x,y)}{d(x,z)} \,\1_{[x,y]}(z)  \left(d(x,z)t^{d(x,z)-1}(1-t)^{d(z,y)} -d(z,y)   t^{d(x,z)}(1-t)^{d(z,y)-1}\right),\] 
and therefore 
\begin{multline*}\partial_t Q_t^{x,y}(z)_{|t=0}=r(x,z,z,y)\,\binom{d(x,y)}{d(x,z)} \left( \1_{[x,y]}(z)\1_{z\sim x}-d(x,y)\1_{x=z}\right)\\
=\sum_{x'\in[x,y], x'\sim x} d(x,y) 
r(x,x',x',y)\left(\delta_{x'}(z)-\delta_x(z)\right).
\end{multline*}

Since $\partial_t \widehat Q^\gamma_t(z)_{|t=0} =\sum_{x,y\in \X} \partial_t Q_t^{x,y}(z)_{|t=0} \, \widehat \pi(x,y)$, it follows that
\begin{align*}
\lim_{t\to 0} \frac{ H(\widehat Q_t|\mu)-H(\nu_0|\mu)}{t}&=\partial_t H(\widehat Q^\gamma_t|\mu)_{|t=0}=\sum_{z\in \X} \partial_t \widehat Q_t(z)_{|t=0} \log f(z)\,\mu(z)\\
&=\sum_{x,y\in \X} \sum_{x'\in[x,y], x'\sim x} d(x,y)  \left(\log f(x')-\log f(x)\right)\, d(x,y)\,r(x,x',x',y)\, \widehat \pi(x,y)\\
&=\sum_{x\in \X}\sum_{x'\in\X, x'\sim x}   \left(\log(f(x')-\log f(x)\right)\, \left(\sum_{y\in \X, x'\in [x,y]} d(x,y) r(x,x',x',y) \, \widehat \pi_{_\rightarrow}(y|x) \right) \,\nu_0(x)
\end{align*}
The proof of Lemma \ref{logsobcube} ends from \eqref{tata}  as $t$ goes to 0.
\end{proof}

\begin{lemma}\label{opt}  Let $\X$ be a graph with graph distance $d$. 
Let  $\nu_0,\nu_1\in \Pc(\X)$ and assume that  $\widehat \pi\in\Pc(\X\times \X)$ is a $W_1$-optimal coupling of $\nu_0$ and $\nu_1$, namely
\[W_1(\nu_0,\nu_1)=\iint d(x,y) \,d\widehat \pi(x,y).\]
\begin{enumerate}[label=(\roman*)]
\item \label{mon0} Let 
\[C_{_\rightarrow}:=\Big\{(z,w)\in\X\times \X\,\Big|\,z\neq w, \exists (x,y)\in \supp(\widehat{\pi}), (z,w)\in [x,y]\Big\}.\]
If $(z_1,w)\in C_{_\rightarrow}$ and $(w,z_2)\in C_{_\rightarrow}$ then $d(z_1,z_2)\geq 2$ and $w\in[z_1,z_2]$.

\item \label{mon1} Let 
\[ C_{_\leftarrow}:=\Big\{(z,w)\in\X\times \X\,\Big|\, (w,z) \in C_{_\rightarrow} \Big\}.\]
The  sets $C_{_\rightarrow}$ and $C_{_\leftarrow}$ are disjoint.
 
\item \label{mon2}
If $d$ is the Hamming distance then the  following sets $D_{_\rightarrow}$ and $D_{_\leftarrow}$ are disjoint, 
\[D_{_\leftarrow}:=\Big\{ w\in \supp(\nu_1)\,\Big|\, \exists x\in \X, w\neq x, (x,w)\in \supp(\widehat{\pi}) \Big\},\] 
and 
\[D_{_\rightarrow}:=\Big\{ w\in \supp(\nu_0)\,\Big|\, \exists y\in \X, w\neq y, (w,y)\in \supp(\widehat{\pi}) \Big\}.\]  
\end{enumerate}
\end{lemma}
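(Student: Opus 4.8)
## Proof Strategy for Lemma \ref{opt}

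The plan is to exploit the $d$-cyclical monotonicity of the $W_1$-optimal coupling $\widehat\pi$, which is the standard structural property of optimizers in Monge--Kantorovich problems. Recall that cyclical monotonicity of $\supp(\widehat\pi)$ means: for any finite collection $(x_1,y_1),\ldots,(x_k,y_k)\in\supp(\widehat\pi)$ and any permutation $\sigma$, one has $\sum_{i} d(x_i,y_i)\le\sum_i d(x_i,y_{\sigma(i)})$. In particular, for two pairs $(x_1,y_1),(x_2,y_2)\in\supp(\widehat\pi)$, swapping the second coordinates gives $d(x_1,y_1)+d(x_2,y_2)\le d(x_1,y_2)+d(x_2,y_1)$.

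For part \ref{mon0}: suppose $(z_1,w)\in C_{_\rightarrow}$ and $(w,z_2)\in C_{_\rightarrow}$. By definition there exist $(x_1,y_1),(x_2,y_2)\in\supp(\widehat\pi)$ with $(z_1,w)\in[x_1,y_1]$ and $(w,z_2)\in[x_2,y_2]$. Being on a geodesic means the triangle inequalities are saturated: $d(x_1,z_1)+d(z_1,w)+d(w,y_1)=d(x_1,y_1)$ and $d(x_2,w)+d(w,z_2)+d(z_2,y_2)=d(x_2,y_2)$. First I would combine the cyclical monotonicity inequality $d(x_1,y_1)+d(x_2,y_2)\le d(x_1,y_2)+d(x_2,y_1)$ with the triangle inequalities $d(x_1,y_2)\le d(x_1,w)+d(w,y_2)$ and $d(x_2,y_1)\le d(x_2,w)+d(w,y_1)$, then substitute the geodesic saturations to squeeze everything and deduce that $d(x_1,w)+d(w,y_2)$ and $d(x_2,w)+d(w,y_1)$ must in fact equal the relevant sub-geodesic lengths. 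Tracking this bookkeeping should force $d(z_1,w)+d(w,z_2)=d(z_1,z_2)$, i.e.\ $w\in[z_1,z_2]$. This is the computational heart of the lemma and the step I expect to be the main obstacle: one must carefully arrange the chain of inequalities so that all slack is eliminated simultaneously.

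For part \ref{mon1}: suppose for contradiction that $(z,w)\in C_{_\rightarrow}\cap C_{_\leftarrow}$, so both $(z,w)$ and $(w,z)$ lie in $C_{_\rightarrow}$ with $z\neq w$. Applying part \ref{mon0} with $(z_1,w)=(z,w)$ and $(w,z_2)=(w,z)$ would give $w\in[z,z]=\{z\}$, forcing $w=z$, a contradiction. Hence $C_{_\rightarrow}$ and $C_{_\leftarrow}$ are disjoint. For part \ref{mon2}: suppose $w\in D_{_\rightarrow}\cap D_{_\leftarrow}$. Then there are $(w,y)\in\supp(\widehat\pi)$ with $w\neq y$ and $(x,w)\in\supp(\widehat\pi)$ with $x\neq w$. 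Since $d$ is the Hamming distance, $d(w,y)\ge1$ and $d(x,w)\ge1$, so $(x,w)\in[x,w]\subset C_{_\rightarrow}$ and $(w,y)\in[w,y]\subset C_{_\rightarrow}$; applying part \ref{mon0} yields $w\in[x,y]$, hence $d(x,w)+d(w,y)=d(x,y)$. On the other hand, cyclical monotonicity applied to the pairs $(x,w),(w,y)$ (swapping second coordinates) gives $d(x,w)+d(w,y)\le d(x,y)+d(w,w)=d(x,y)$, which is consistent — so I would instead extract the contradiction from the fact that $w$ simultaneously receives mass as a \emph{target} and sends mass as a \emph{source} to a strictly different point; one uses that $(x,y)\in\supp(\widehat\pi)$ need not hold, but the swapped pair $(x,y),(w,w)$ has strictly smaller total cost unless one of $d(x,w),d(w,y)$ vanishes, contradicting $x\neq w$ and $w\neq y$. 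I would write this last argument by directly invoking optimality: replacing the transport plan on the pairs $(x,w),(w,y)$ by the rerouting to $(x,y),(w,w)$ strictly decreases $\iint d\,d\pi$ when $d$ is Hamming and all four points are involved nontrivially, which is the desired contradiction.
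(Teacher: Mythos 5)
Your proposal follows the same route as the paper: $d$-cyclical monotonicity of $\supp(\widehat\pi)$ plus triangle-inequality bookkeeping, and your parts \ref{mon1} and \ref{mon2} are in substance the paper's arguments. The one place you stop short is precisely the computation you flag as the main obstacle in \ref{mon0}; it does close with your choice of inequalities, but as arranged it needs one further triangle inequality, so let me record how. With $(z_1,w)\in[x_1,y_1]$ and $(w,z_2)\in[x_2,y_2]$ you have the saturations $d(x_1,y_1)=d(x_1,z_1)+d(z_1,w)+d(w,y_1)$ and $d(x_2,y_2)=d(x_2,w)+d(w,z_2)+d(z_2,y_2)$, together with $d(x_1,w)=d(x_1,z_1)+d(z_1,w)$ and $d(w,y_2)=d(w,z_2)+d(z_2,y_2)$. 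Hence your two triangle inequalities give $d(x_1,y_2)+d(x_2,y_1)\le d(x_1,w)+d(w,y_2)+d(x_2,w)+d(w,y_1)=d(x_1,y_1)+d(x_2,y_2)$, and the monotonicity inequality turns the whole chain into equalities; in particular $d(x_1,y_2)=d(x_1,z_1)+d(z_1,w)+d(w,z_2)+d(z_2,y_2)$. To conclude you must still invoke $d(x_1,y_2)\le d(x_1,z_1)+d(z_1,z_2)+d(z_2,y_2)$, which yields $d(z_1,w)+d(w,z_2)\le d(z_1,z_2)$, hence equality, i.e.\ $w\in[z_1,z_2]$. The paper reaches this last inequality in a single chain by choosing the triangle inequalities asymmetrically from the start, bounding $d(x_1,y_2)$ through $z_1,z_2$ and $d(x_2,y_1)$ through $w$; either arrangement works, but yours is not a proof until that final step is written.

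In \ref{mon2}, your parenthetical claim that the two-pair monotonicity inequality $d(x,w)+d(w,y)\le d(x,y)+d(w,w)$ ``is consistent'' is mistaken in the relevant setting: for the distance $d(x,y)=\1_{x\neq y}$ the left side equals $2$ while the right side is at most $1$, so that inequality alone is already the desired contradiction, and your rerouting of mass from $(x,w),(w,y)$ to $(x,y),(w,w)$ is nothing but its standard proof (so your final argument is valid, and does not even need part \ref{mon0}). The line you abandoned also finishes: $w\in[x,y]$ from \ref{mon0} forces $d(x,y)=d(x,w)+d(w,y)=2$, impossible for a $\{0,1\}$-valued distance, which is exactly the paper's conclusion. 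So \ref{mon2} is correct, just stated in a roundabout way; the only genuine omission is the unfinished bookkeeping in \ref{mon0} described above.
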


\begin{proof} 
\begin{enumerate}[label=(\roman*)]
\item Let $(z_1,w)\in C_{_\rightarrow}$ and $(w,z_2)\in C_{_\rightarrow}$. 
There exists $(x,y)\in \supp(\widehat{\pi})$ such that  $(z_1,w)\in [x,y]$ and there exists $(x',y')\in \supp(\widehat{\pi})$ such that  $(w,z_2)\in [x',y']$. 
One has 
\[d(z_1,w)+d(w,z_2)=\big((d(x,y)-d(x,z_1)-d(w,y)\big)+\big(d(x',y')-d(x',w)-d(z_2,y')\big).\]
It is well known that the support of any optimizer of $W_1(\nu_0,\nu_1)$ is $d$-cyclically monotone (see \cite[Theorem 5.10]{Vil09}. By definition, it means that for any family $(x_1,y_1),\ldots ,(x_N,y_N)$ of points in the support of $\widehat  \pi$
\[\sum_{i=1}^N d(x_i,y_i) \leq \sum_{i=1}^N d(x_i,y_{i+1}),\]
with the convention $y_{N+1}=y_1$.
It follows that 
\[ d(x,y)+d(x',y')\leq d(x,y')+d(x',y),\]
and therefore, from the above identity,
\[d(z_1,w)+d(w,z_2)\leq d(x,y')+d(x',y) -d(x,z_1)-d(w,y)-d(x',w)-d(z_2,y').\]
By the triangular inequality, it follows that 
\begin{multline*}
2\leq d(z_1,w)+d(w,z_2)\leq \big(d(x,z_1)+d(z_1,z_2)+d(z_2,y')\big)\\
+\big(d(x',w)+d(w,y)\big) -d(x,z_1)-d(w,y)-d(x',w)-d(z_2,y')=d(z_1,z_2) .
\end{multline*}
This implies that $d(z_1,z_2)\geq 2$ and $w\in[z_1,z_2]$.

\item
Assume there exists $(z,w)\in C_{_\rightarrow}\cap C_{_\leftarrow}$. Then $(w,z)\in C_{_\rightarrow}$ and therefore, according to \ref{mon0}, $z\in[w,w]=\{w\}$. This is impossible since $z\neq w$.

\item We assume that $d(x,y)=\1_{x\neq y}$ for any $x,y\in \X$. If the two sets $D_{_\rightarrow}$ and   $D_{_\leftarrow}$ intersect, then there exists 
$(x,w)\in  C_{_\rightarrow}$ and $(w,y)\in  C_{_\rightarrow}$. Point \ref{mon0} implies $w\in[x,y]$, and since $d(x,y)=1$, we get either $w=x$ or $w=y$, which is impossible.

\end{enumerate}
\end{proof}

\begin{lemma}\label{lemmetech} Let $\nu_0$ and $\nu_1$ some probability measures in ${\mathcal P}(\X)$ with bounded support. 
\begin{enumerate}[label=(\roman*)]
\item\label{item1} If \eqref{unifbounded1} holds ($\exists S\geq 1,\sup_{x\in\X}|L(x,x)|\leq S$), then for any $x,y\in \X$ and any integer $k$, \[L^k(x,y)\leq (2S)^k.\]
\item\label{item2} If \eqref{unifbounded2} holds ($\exists I\in(0,1],\inf_{x,y\in\X,x\sim y} L(x,y)\geq I$), then for any $x,y\in \X$, $L^{d(x,y)}(x,y)\geq I^{d(x,y)}.$
\item\label{item3} If \eqref{unifbounded1} and \eqref{unifbounded2} hold, then for any $x,y\in \X$, any $t\in [0,1]$, and any $\gamma\in(0,1)$, one has  
\[\P_t^{\gamma}(x,y) = \frac{ L^{d(x,y)}(x,y)}{d(x,y)!} \, (\gamma t)^{d(x,y)}\left( 1+ \gamma K^{d(x,y)}O(1)\right),\]
where $K:=2S/I$ and $O(1)$ denotes a quantity uniformly  bounded in $x,y,t$ and $\gamma$. 
\item \label{item4} If \eqref{unifbounded1} holds  then for any $x,y,z\in \X$ and for any $t\in[0,1]$
\[\lim_{\gamma\to 0}{Q_t^\gamma}^{x,y}(z)={Q_t}\!^{ x,y}(z):=\1_{[x,y]}(z)\,r(x,z,z,y) \,\B_t^{d(x,y)}(d(x,z)).\] 

\item \label{item5} If \eqref{unifbounded1} holds  then for any $x,y\in \X$,
\[\P_t^{\gamma}(x,y) \geq \frac{ L^{d(x,y)}(x,y) }{d(x,y)!} \,(t\gamma)^{d(x,y)}e^{-\gamma t S}.\]
For a fixed $x_0\in \X$, let $\displaystyle D:=\max_{x\in {\rm supp}(\nu_0),y\in {\rm supp}(\nu_1)}(d(x_0,x), d(x_0,y))$. It follows that if \eqref{unifbounded1} and \eqref{unifbounded2} hold then   for any $\gamma\in(0,1)$ and $t\in(0,1)$,
 \begin{equation}\label{encadrementbis}
0<e^{-S}      \left(\frac{t\gamma I}{d(x_0,z)+1+D}\right)^{d(x_0,z)+1+D} \min_{w\in\supp(\nu_0)} f^\gamma(w)  \leq \P_t^\gamma f^\gamma(z)\leq \max_{w\in\supp(\nu_0)} f^\gamma(w).
 \end{equation}

\item \label{item6} If $\eqref{unifbounded1}$ holds then ${\mathbb E}_{R^\gamma}[\ell|X_0=x,X_1=y]\leq \frac{\gamma S}{\P^\gamma_1(x,y)}$.

\item  \label{item7} Assume $\eqref{unifbounded1}$ and $\eqref{unifbounded2}$ hold. For a fixed $x_0\in \X$, let $\displaystyle D:=\max_{x\in {\rm supp}(\nu_0),y\in {\rm supp}(\nu_1)}(d(x_0,x), d(x_0,y))$. For any $x\in {\rm supp}(\nu_0) $    and $y\in {\rm supp}(\nu_1) $, one has for any $t \in(0,1)$ and any $\gamma\in(0,1)$
\[
{Q_t^\gamma}^{x,y}(z)\leq O(1) \left( \1_{[x,y]}(z)+ \left(1-\1_{[x,y]}(z)\right)\gamma \left(\gamma K^2\right)^{[2d(x_0,z)-4D-1]_+}\right),
\]
 where $K:=2S/I$ and $O(1)$ denotes a  constant that only depends on $S,I,D$ and $K:=2S/I$. 
 
 As a consequence, setting 
 \[B:=\bigcup_{x\in\supp(\nu_0),y\in\supp(\nu_1)} [x,y],\]
 one has 
 \begin{eqnarray}\label{majQt}
  \widehat Q_t^\gamma (z)\leq O(1) \,\gamma \left(\gamma K^2\right)^{[2d(x_0,z)-4D-1]_+},\qquad \forall z\in \X\setminus B.
  \end{eqnarray}
 
 \item \label{item8} Assume $\eqref{unifbounded1}$ and $\eqref{unifbounded2}$ hold. Let $x_0\in \X$, $t\in(0,1)$ and $\gamma\in(0,1)$. For any $w,z,z'\in \X$ with $d(z,z')\leq 2$ and $w\in \supp(\nu_0)$ one has 
 \[
 \frac{P_t^\gamma(z',w)}{P_t^\gamma(z,w)}\leq \frac{\max\left(1, d(x_0,z)^{d(z,z')}\right)  K^{d(x_0,z)}\,O(1)}{(\gamma t)^{d(z,z')}} ,
 \]
 where $K:=2S/I$ and $O(1)$ is a positive constant that does not depend on $z,z',\gamma,t$. 
It follows that  
\begin{equation}\label{encadrement}
 \frac{(\gamma t)^{d(z,z')}} {\max\left(1, d(x_0,z)^{d(z,z')}\right)  K^{d(x_0,z)}\,O(1)}\leq \frac{\P_t^\gamma f^\gamma(z')}{\P_t^\gamma f^\gamma(z)} \leq \frac{\max\left(1, d(x_0,z)^{d(z,z')}\right)  K^{d(x_0,z)}\,O(1)}{(\gamma t)^{d(z,z')}}.
 \end{equation}

 \item  \label{item9} Let $(\gamma_\ell )_{\ell\in \N}$ be a  sequence of positive numbers converging to zero. If \eqref{unifbounded0}, \eqref{unifbounded1},  $\eqref{unifbounded2}$ and \eqref{convhyp} hold, then 
 for any $t\in[0,1]$
 \[\lim_{\gamma_\ell \to 0} H(\widehat Q_t^{\gamma_\ell }|m) =H(\widehat Q_t^{0}|m).\]
\end{enumerate}
\end{lemma}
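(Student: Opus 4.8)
The plan is to establish the nine items essentially in the listed order, everything resting on the power-series identity $\P^\gamma_t = e^{t\gamma L} = \sum_{k\ge0}(t\gamma)^kL^k/k!$ (valid under \eqref{unifbounded1}, see \eqref{formePt}) together with the facts, coming from \eqref{hypL}, that $L^k(x,y) = 0$ for $k < d(x,y)$ while $L^{d(x,y)}(x,y) > 0$. For \ref{item1}, since $|L(x,x)|\le S$ and $\sum_{y\ne x}L(x,y) = |L(x,x)|$, the entrywise modulus $|L|$ has all row sums $\le 2S$, so $L^k(x,y)\le|L|^k(x,y)\le(2S)^k$. For \ref{item2}, a walk of length exactly $d(x,y)$ from $x$ to $y$ makes no lazy step, hence is a geodesic path, so $L^{d(x,y)}(x,y) = \sum_{\alpha\in G(x,y)}\prod_iL(\alpha_{i-1},\alpha_i)\ge I^{d(x,y)}$ by \eqref{unifbounded2} and the existence of at least one geodesic.

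For \ref{item3}--\ref{item5} I would isolate the leading term $(t\gamma)^dL^d(x,y)/d!$ in $\P^\gamma_t(x,y)$ (writing $d = d(x,y)$) and bound the tail $\sum_{k>d}(t\gamma)^kL^k(x,y)/k!$ by $(2St\gamma)^{d+1}e^{2S}/(d+1)!$ using \ref{item1}; dividing by the lower bound $(t\gamma)^dI^d/d!$ from \ref{item2} gives the relative error $\gamma K^dO(1)$ of \ref{item3}, and letting $\gamma\to0$ in \eqref{definut}, the leading orders of numerator and denominator being explicit positive multiples of powers of $\gamma$, yields \ref{item4} (the contribution of $z\notin[x,y]$ being of order $\gamma^{d(x,z)+d(z,y)-d(x,y)}=o(1)$). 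For \ref{item5} I would write $L = \widetilde L - SI$ with $\widetilde L\ge0$ entrywise, so $\P^\gamma_t = e^{-t\gamma S}e^{t\gamma\widetilde L}$ and $e^{t\gamma\widetilde L}(x,y)\ge(t\gamma)^d\widetilde L^d(x,y)/d! = (t\gamma)^dL^d(x,y)/d!$, the binomial cross-terms vanishing since $L^j(x,y) = 0$ for $j < d$; the two-sided estimate \eqref{encadrementbis} then follows by combining this with \ref{item2}, the crude bound $d(z,w)\le d(x_0,z)+D$, the inequality $n!\le n^n$, the stochasticity of $\P^\gamma_t$ (for the upper bound), and $f^\gamma > 0$ on $\supp(\nu_0)$. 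Part \ref{item6} follows from the jump-rate description of $R^\gamma$: since $L^\gamma$ only allows jumps between neighbours at rate $\gamma L(\cdot,\cdot)$, $\mathbb E_{R^\gamma}[\ell\,\1_{X_1=y}\mid X_0=x] = \int_0^1\sum_{a\sim b}\P^\gamma_s(x,a)\gamma L(a,b)\P^\gamma_{1-s}(b,y)\,ds\le\int_0^1\gamma S\,ds = \gamma S$, and one divides by $\P^\gamma_1(x,y)$.

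The heart of the argument is \ref{item7}. Using \ref{item1} one gets $\P^\gamma_t(x,z)\P^\gamma_{1-t}(z,y)\le e^{4S}(2S\gamma)^{d(x,z)+d(z,y)}/(d(x,z)!\,d(z,y)!)$, and using \ref{item5} and \ref{item2}, $\P^\gamma_1(x,y)\ge e^{-S}\gamma^{d(x,y)}I^{d(x,y)}/d(x,y)!$; inserting both into \eqref{definut} and bounding $d(x,y)!/(d(x,z)!\,d(z,y)!)\le2^{d(x,z)+d(z,y)}$ (since $d(x,y)\le d(x,z)+d(z,y)$) gives ${\nu_t^\gamma}^{x,y}(z)\le e^{5S}(2K)^{d(x,z)+d(z,y)}\gamma^{p}$ with $p := d(x,z)+d(z,y)-d(x,y)\ge0$. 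If $z\in[x,y]$ then $p = 0$ and $d(x,z)+d(z,y)=d(x,y)\le2D$, which gives the $O(1)$ bound; if $z\notin[x,y]$ then $p\ge1$, and the triangle inequality (with $d(x_0,x),d(x_0,y)\le D$) yields $d(x,z),d(z,y)\ge d(x_0,z)-D$ and $d(x,y)\le2D$, hence $p\ge2d(x_0,z)-4D$, so $p\ge1+[2d(x_0,z)-4D-1]_+$; since $K\ge2$ the prefactor $(2K)^{d(x,z)+d(z,y)}\le K^{4d(x_0,z)+4D}$ is absorbed into the claimed bound at the cost of a constant depending only on $S,I,D$. This proves the pointwise inequality, and \eqref{majQt} follows by integrating against $\widehat\pi^\gamma$ over $\supp(\nu_0)\times\supp(\nu_1)$. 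Part \ref{item8} uses the same two-sided heat-kernel estimates for $\P^\gamma_t(z',w)/\P^\gamma_t(z,w)$, with $|d(z',w)-d(z,w)|\le d(z,z')$, $d(z,w)\le d(x_0,z)+O(1)$ for $w\in\supp(\nu_0)$, and $n!\le n^n$ producing the $d(x_0,z)^{d(z,z')}$ factor; \eqref{encadrement} then follows from the elementary fact $\bigl(\sum_wa_w\bigr)/\bigl(\sum_wb_w\bigr)\le\max_wa_w/b_w$ applied to $a_w = \P^\gamma_t(z',w)f^\gamma(w)$, $b_w = \P^\gamma_t(z,w)f^\gamma(w)$ (and with $z,z'$ exchanged for the lower bound, $d(x_0,z')\le d(x_0,z)+2$ being absorbed into the constant).

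Finally \ref{item9} is a dominated-convergence argument: by narrow convergence on the discrete space $\X$ one has $\widehat Q^{\gamma_k}_t(z)\to\widehat Q^0_t(z)$ for every $z$, so $\widehat Q^{\gamma_k}_t(z)\log(\widehat Q^{\gamma_k}_t(z)/m(z))\to\widehat Q^0_t(z)\log(\widehat Q^0_t(z)/m(z))$ termwise. For a $\gamma$-independent summable majorant I would use \eqref{unifbounded0} and the elementary bound $|u\log(u/c)|\le1/e+u|\log c|$ on the finitely many $z$ in the set $B$ of \eqref{majQt}, and on $\X\setminus B$, for $\gamma$ below a threshold $\bar\gamma$, the estimate $\widehat Q^\gamma_t(z)\le O(1)\bar\gamma(\bar\gamma K^2)^{[2d(x_0,z)-4D-1]_+}$ from \ref{item7} (a range on which $u\mapsto|u\log(u/c)|$ is increasing); choosing $\bar\gamma$ so small that $\bar\gamma^2K^4$ lies below the constant $\gamma_o$ of \eqref{convhyp} makes the resulting majorant summable, the polynomial and logarithmic corrections being absorbed into the geometric decay $\gamma_o^{d(x_0,z)}$, so the limit passes through the sum. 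The step I expect to be the main obstacle is exactly the exponent bookkeeping in \ref{item7}, turning $d(x,z)+d(z,y)-d(x,y)\ge2d(x_0,z)-4D$ into the clean decay rate $(\gamma K^2)^{[2d(x_0,z)-4D-1]_+}$ with a $z$-independent constant; everything downstream, in particular the summability input to \ref{item9}, hinges on getting this right.
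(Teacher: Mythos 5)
Your proposal is correct, and for items (i)--(iv), (vii) and (viii) it follows essentially the same computations as the paper: the same row-sum bound $L^k(x,y)\le (2S)^k$, the same geodesic lower bound $L^{d(x,y)}(x,y)\ge I^{d(x,y)}$, the same isolation of the leading term of the exponential series for (iii)--(iv), the same exponent bookkeeping $d(x,z)+d(z,y)-d(x,y)\ge \max(1,2d(x_0,z)-4D)$ for (vii), and the same weighted-average (mediant) argument for \eqref{encadrement}. The genuine differences are in (v), (vi) and (ix). For (v)--(vi) the paper constructs the uniformized process explicitly (a Poisson clock of rate $\gamma S$ driving an embedded chain $Y_{N_t}$), restricts to exactly $d(x,y)$ jumps along geodesics for the lower bound, and dominates $\ell$ by $N_1$ for (vi); you instead use the algebraic counterpart, writing $L=\widetilde L-S\,\mathrm{Id}$ with $\widetilde L\ge 0$ entrywise so that $\P^\gamma_t=e^{-t\gamma S}e^{t\gamma\widetilde L}$ and $\widetilde L^{\,d}(x,y)=L^{d}(x,y)$ because $L^j(x,y)=0$ for $j<d(x,y)$, and you prove (vi) via the standard jump-intensity (L\'evy-system) formula $\int_0^1\sum_{a\sim b}\P^\gamma_s(x,a)\,\gamma L(a,b)\,\P^\gamma_{1-s}(b,y)\,ds\le \gamma S$. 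These are the same estimates in analytic rather than probabilistic clothing; your version avoids introducing the auxiliary probability space, at the (mild) cost of invoking the jump-count formula for bounded-rate chains. For (ix) the paper bounds the tail sum over $\X\setminus B$ directly by $O(\sqrt{\gamma})$ via $|v\log v|\le\sqrt v$, whereas you fix a threshold $\bar\gamma$ with $(\bar\gamma K^2)^2<\gamma_o$ and apply dominated convergence using the monotonicity of $u\mapsto u\log(m(z)/u)$ below $m(z)/e$; both hinge on the same inputs, namely \eqref{majQt}, \eqref{unifbounded0} and \eqref{convhyp}, and both are valid. One incidental remark: your careful bound in (viii), keeping $(2S)^{d(z',w)}/I^{d(z,w)}\le O(1)K^{d(z,w)}$ together, actually recovers the exponent $K^{d(x_0,z)}$ stated in the lemma, which is slightly sharper than the $K^{2d(x_0,z)}$ produced by the paper's own display; the discrepancy is harmless where the estimate is used.
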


\begin{proof}
\begin{enumerate}[label=(\roman*)]
\item Given \eqref{unifbounded1}, we want to show that for any $x\in \X$, 
$S_k(y):= \sup_{x\in \X}  |L^k(x,y)|\leq (2S)^k$. It follows by induction on $k$ from the inequality 
\[S_{k+1}(y)=\sup_{x\in \X} \Big|L(x,x) L^k(x,y)+\sum_{z, z\sim x} L(x,z) L^k(z,y) \Big|\leq 2\sup_{x\in \X}|L(x,x)| \,\, S_{k}(y).\]

\item For $x=y$, one has $L^{d(x,y)}(x,y)=1$ and  by definition for $x\neq y$,
\[L^{d(x,y)}(x,y):= \sum_{\alpha}L_\alpha,\]
where the sum is over all path $\alpha$  from $x$ to $y$ of length $d(x,y)$, $\alpha=(z_0,\ldots, z_{d(x,y)})$ with $z_0=x$ and $z_{d(x,y)}=y$, and  
\[L_\alpha:=L(z_0,z_1)L(z_1,z_2)\ldots L(z_{d(x,y)-1},z_{d(x,y)}).\]
Such a path $\alpha$ is a geodesic.
Since we assume in this paper that $L(x,y)>0$ if and only if $x$ and $y$ are neighbour, one has $L_\alpha>0$.
By irreducibility it always exists at most one geodesic path from $x$ to $y$, and from assumption \eqref{unifbounded1}, for  such a   path $\alpha$, $L_\alpha\geq I^{d(x,y)}$. As a consequence we get $L^{d(x,y)}(x,y)\geq I^{d(x,y)}$.

\item According to  \eqref{formePt}, for any $x,y\in \X$, 
\begin{eqnarray*}\P_t^{\gamma}(x,y) = \frac{ L^{d(x,y)}(x,y)}{d(x,y)!} \, (\gamma t)^{d(x,y)}\left( 1+ \gamma\sum_{k,k\geq d(x,y)+1}  \frac{ L^k(x,y)}{L^{d(x,y)}(x,y)}\,\frac{d(x,y)!}{k!}\,t^{k-d(x,y)}\gamma^{k-d(x,y)-1}\right).
\end{eqnarray*}
Applying Lemma \ref{lemmetech} \ref{item1} and  \ref{item2}, we get 
\begin{align*}
\Big|\P_t^{\gamma}(x,y) - & \frac{ L^{d(x,y)}(x,y)}{d(x,y)!} \, (\gamma t)^{d(x,y)}\Big|
\\&\leq   \gamma   \, \frac{ L^{d(x,y)}(x,y)}{d(x,y)!} \, (\gamma t)^{d(x,y)} \sum_{k,k\geq d(x,y)+1} K^{d(x,y)} \frac{(2S)^{k-d(x,y)}}{(k-d(x,y))!}\\
&\leq\gamma   \, \frac{ L^{d(x,y)}(x,y)}{d(x,y)!} \, (\gamma t)^{d(x,y)} K^{d(x,y)} e^{2S},
\end{align*}
from which the expected result follows.

\item Let $x,y,z\in \X$ and $t\in[0,1]$. If \eqref{unifbounded1} holds, according to 
\eqref{formePt}, the Taylor expansion of $\P_t^{\gamma}(x,y)$ as $\gamma$ goes to zero is given by 
\[\P_t^{\gamma}(x,y) = \frac{ L^{d(x,y)}(x,y)}{d(x,y)!} \, (\gamma t)^{d(x,y)} + o(\gamma^{d(x,y)}),\]
As a consequence, the Taylor expansion of  ${Q_t^\gamma}^{x,y}(z)$, defined by \eqref{definut}, is 

\begin{multline*}{Q_t^\gamma}^{x,y}(z)=\gamma^{d(x,z)+d(z,y)-d(x,y)} \frac{L^{d(x,z)}(x,z) L^{d(z,y)}(z,y)}{L^{d(x,y)}(x,y)} \,\frac{d(x,y)!}{d(x,z)!d(z,y)!} \,t^{d(x,z)}(1-t)^{d(z,y)} \\+ o(\gamma^{d(x,z)+d(z,y)-d(x,y)}).
\end{multline*}
The expected result follows since one has $\gamma^{d(x,z)+d(z,y)-d(x,y)}=1$ if $z\in[x,y]$, and  \\$\lim_{\gamma\to 0} \gamma^{d(x,z)+d(z,y)-d(x,y)}=0 $ otherwise. 

\item  On some probability space $(\Omega',\mathcal{ A}, \mathbb{ P})$, let $(N_s)_{s\geq 0}$ be a Poisson process with parameter $\gamma S$ and $(Y_n)_{n\in \N}$ be a Markov chain on $\X$   with transition matrix $K$  given by 
\[{\rm K}(z,w): =\frac {L^\gamma (x,w)}{\gamma S}, \quad \mbox{for}\, w\neq z\in \X, \mbox{ and } \quad{\rm K}(z,z):=\frac{\gamma  S+L^\gamma(z,z)}{\gamma S}.\]
We assume that $(Y_n)_{n\in \N}$ and $(N_s)_{s\geq 0}$ are independent.
 It is well known that the law of the process $(X_t)_{t\geq 0}$ under $R^{\gamma}$ given $X_0=x$ is the same as the law of the process $(\widetilde X_t)_{t\geq 0}$ under $\mathbb{ P}$ given $\widetilde X_0=x$   defined by 
 $\widetilde X_t:=Y_{N_t}$. 
 As a consequence, one has for any $y\in \X$,
 \[\P^\gamma_t(x,y)=R^{\gamma}\left(X_t=y\,|\,X_0=x\right)={\mathbb P}\left(\widetilde X_t=y\,|\,\widetilde X_0=x\right).\]
 Let $n=d(x,y)$ and  $\widetilde N_t$ denotes the number of jumps of the process $\widetilde X_t$,
 one has 
 \begin{align*}
 \P_t^\gamma(x,y)&\geq {\mathbb P}\left(\widetilde X_t=y, \widetilde N_t =n \,|\,\widetilde X_0=x\right)\\
 &= {\mathbb P}\left(Y_1,\ldots, Y_{n} \mbox{ are all different}, Y_{n}=y,  N_t =n\, |\,\widetilde X_0=x\right)\\
 &={\mathbb P}\left(N_t =n)\, {\mathbb P}(Y_1,\ldots, Y_{n} \mbox{ are all different}, Y_{n}=y \,|\,\widetilde X_0=x\right)\\
 &=\frac{(\gamma t S)^{n}}{n!} \,e^{-\gamma t S} \sum_{\alpha=(x_0,\ldots,x_{n}),\,\alpha \, {\rm geodesic} \, {\rm from}\,  x\, {\rm to} \, y} {\rm K}(x_0,x_1)\cdots {\rm K}(x_{n-1},x_n)\\
 &=\frac{(\gamma t )^{n}}{n!} \,e^{-\gamma t S} L^{d(x,y)}(x,y).
 \end{align*}
 This ends the proof of the first part of (v). Observe that from the Schr\"odinger system \eqref{SS}, $f^{\gamma}(w)>0$ if and only if $w\in\supp(\nu_0)$. Since $\nu_0$ has bounded support, it follows that for any $w\in \supp(\nu_0)$,
 \[0<\min_{u\in \supp(\nu_0)} f^{\gamma}(u)\leq f^{\gamma}(w)\leq \max_{u\in \supp(\nu_0)} f(u),\]
 and therefore for any $z\in\X$,
 \[\min_{u\in \supp(\nu_0)} f(u) \min_{w\in \supp(\nu_0)} \P^\gamma_t(z,w) \leq \sum_{w\in \supp(\nu_0)} f^{\gamma}(w)  \P^\gamma_t(z,w) = \P^\gamma_t f^{\gamma}(z)\leq \max_{u\in \supp(\nu_0)} f(u).\]
 From $\eqref{unifbounded2}$ and (ii) and since $d(z,w)\leq d(z,x_0)+1+D$ for any $w\in \supp(\nu_0)$, one gets 
 \[\min_{w\in \supp(\nu_0)} \P^\gamma_t(z,w)\geq e^{-S}\left(\frac{t\gamma I}{d(x_0,z)+1+D}\right)^{d(x_0,z)+1+D},\]
from which the second part of (v) follows. 
 
\item The length $\ell(\omega)$ of a path $\omega\in \Omega$ represents the number of jumps of the process $X_t$ between times 0 and 1. Therefore according to the definition of the process $(\widetilde X_t)_{t\geq 0}$ above, 
\begin{align*}
{\mathbb E}_{R^\gamma}&[\ell\,|\,X_0=x,X_1=y]={\mathbb E}_{\mathbb P}\left[\widetilde N_1\,| \,\widetilde X_0=x,  \widetilde X_1=y\right] \\
&\leq  {\mathbb E}_{\mathbb P}\left[ N_1\,|\, \widetilde X_0=x,  \widetilde X_1=y\right] =  \frac{ {\mathbb E}_{\mathbb P}\left[ N_1\1_{\widetilde X_1=y} \,| \,\widetilde X_0=x\right]}{{\mathbb P}\left(\widetilde X_1=y\,|\, \widetilde X_0=x\right)}\leq  \frac{ {\mathbb E}_{\mathbb P}\left[ N_1\right]}{\P^\gamma_1(x,y)},
\end{align*}
which ends the proof since $ {\mathbb E}_{\mathbb P}\left[ N_1\right]=\gamma S$.

\item From \ref{item3} and \ref{item5}, one gets for any $x,z,y\in \X$,
\begin{eqnarray}\label{hic}
 {Q_t^\gamma}^{x,y}(z)&=&\nonumber \frac{\P_t^\gamma(x,z)\P_{1-t}^\gamma(z,y)}{P_1^\gamma(x,y)}\\ \nonumber
 &\leq& \gamma^{d(x,z)+d(z,y)-d(x,y)} r(x,z,z,y)\, \frac{d(x,y)!}{d(x,z)!d(z,y)!} t^{d(x,z)}(1-t)^{d(z,y)}\,e^{\gamma S}\\
 &~&\qquad\qquad\qquad\qquad\qquad\qquad\qquad\left(1+\gamma K^{d(x,z)}O(1)\right)\left(1+\gamma K^{d(z,y)}O(1)\right).
\end{eqnarray}
If $z\in [x,y]$ then thanks to \ref{item1} and  \ref{item2}, the right-hand side of this inequality is bounded from above by 
\[\left(\frac{2S}{I}\right)^{d(x,y)}e^{d(x,y)}e^{\gamma S}4 K^{2d(x,y)}O(1),\]
and the maximum of this quantity over all $x\in {\rm supp}(\nu_0)$ and $y\in {\rm supp}(\nu_1)$ is a constant $O(1)$, independent of $x,z,y$ and $\gamma$.\\
If $z\not \in [x,y]$, then $d(x,z)+d(z,y)-d(x,y)\geq \max\{1, 2d(x_0,z)-4D\}$, and the right-hand side of \eqref{hic} is bounded by
\begin{multline*}
\gamma^{d(x,z)+d(z,y)-d(x,y)}\,\frac{(2S)^{d(x,z)+d(z,y)}}{I^{d(x,y)}}\,d(x,y)!\,e^{\gamma S}4K^{d(x,z)+d(z,y)}O(1)\\ \leq \gamma^{1+[2d(x_0,z)-4D-1]_+} \frac{(2S)^{2d(x_0,z)+ 2D}}{I^{d(x,y)}}\,d(x,y)!\,e^{\gamma S}4K^{2d(x_0,z)+2D} O(1).
\end{multline*}
The maximum over all $x\in {\rm supp}(\nu_0)$ and $y\in {\rm supp}(\nu_1)$ of the right-hand side   quantity is bounded by $O(1)\,\gamma^{1+[2d(x_0,z)-4D-1]_+} K^{4d(x_0,z)}$. This ends the proof of the first inequality of \ref{item7}. The second inequality easily follows since 
\[\widehat Q_t^\gamma(z)= \sum_{x\in \supp(\nu_0),y\in\supp(\nu_1) } {Q_t^\gamma}^{x,y}(z) \,\,\widehat \pi^\gamma(x,y).\]

\item Using \ref{item3} and \ref{item5}, one gets for any $z,z'\in \X$ and any $w\in \supp(\nu_0)$,
\begin{align*}
&\frac{\P_t^\gamma(z',w)}{\P_t^\gamma(z,w)}\leq \frac{L^{d(z',w)}(z',w)}{L^{d(z,w)}(z,w)}\, \frac{d(z,w)!}{d(z',w)!}\,\left(\frac{1}{\gamma t}\right)^{d(z,w)-d(\tz,w)} e^{\gamma t S}\left(1+\gamma K^{d(z',w)} O(1)\right)\\
&\leq K^{d(z,z')+ d(z,x_0)+d(x_0,w)}\max\left(1,d(z,w)^2\right) \left(\frac{1}{\gamma t}\right)^{d(z,z')}2e^{ S}K^{d(z,z')+ d(z,x_0)+d(x_0,w)}O(1) \\
&\leq \frac{K^{2d(z,x_0)}\max\left(1,d(z,x_0)^2\right)O(1)}{(\gamma t)^{d(z,z')}}, 
\end{align*}
where one maximizes over all $w\in \supp(\nu_0)$ to get the last inequality.
Inequality \eqref{encadrement} follows since 
\[ 
\frac{\P_t^\gamma f^\gamma(z')}{\P_t^\gamma f^\gamma(z)}= \sum_{w\in \supp(\nu_0) }\frac{\P_t^\gamma(z',w)}{\P_t^\gamma(z,w)} \frac{f^\gamma(w)\P_t^\gamma(z,w)}{\P_t^\gamma f^\gamma(z)},
\]
with $\displaystyle\sum_{w\in \supp(\nu_0) } \frac{f^\gamma(w)\P_t^\gamma(z,w)}{\P_t^\gamma f^\gamma(z)}=1$.

\item Recall that 
\[H(\widehat Q^{\gamma_\ell}_t|m)= \sum_{z\in \X}\log \frac{\widehat Q^{\gamma_\ell }_t(z)}{m(z)}\, \widehat Q^{\gamma_\ell }_t(z).
\]
Let us consider  the finite set $B$ defined in Lemma \ref{lemmetech} \ref{item7}. From the weak convergence of the sequence $(\widehat Q^{\gamma_\ell }_t)$ to $\widehat Q^{0}_t$ and since $\supp (\widehat Q^{0}_t)\subset B$, one has 
\[\lim_{\gamma_\ell \to 0} \sum_{z\in B}\log \frac{\widehat Q^{\gamma_\ell }_t(z)}{m(z)}\, \widehat Q^{\gamma_\ell }_t(z)= H(\widehat Q^{0}_t|m).\]
Therefore it remains to prove that 
\[\lim_{\gamma_\ell \to 0} \sum_{z\in \X\setminus B}\log \frac{\widehat Q^{\gamma_\ell }_t(z)}{m(z)}\, \widehat Q^{\gamma_\ell }_t(z)=0.\]
From Lemma \ref{lemmetech} \ref{item7} and hypothesis \eqref{unifbounded0}  one has, for any $z\in \X\setminus B$, 
\[\frac{\widehat Q^{\gamma_\ell }_t(z)}{m(z)}\leq \frac {O(1)\,\gamma_\ell  \,\left(\gamma_\ell  K^2\right)^{[2d(x_0,z)-4D-1]_+}}{\inf_{z\in\X} m(z)}.\]
Using the inequality $|v\log v|\leq \sqrt v$ for $v\in(0,1]$, we get for $0<\gamma_\ell \leq \min\left(\frac{\inf_{z\in X}m(z)}{O(1)},\frac1{K^2}\right)$,
\[
\sum_{z\in \X\setminus B}\log \frac{\widehat Q^{\gamma_\ell }_t(z)}{m(z)}\, \widehat Q^{\gamma_\ell }_t(z)
\leq O(1)\sup_{z\in \X} m(z) \sqrt {\gamma_\ell} \sum_{z\in \X} \left(\gamma_\ell K^2\right)^{[2d(x_0,z)-4D-1]_+/2}. 
\]
Hypothesis \eqref{convhyp} then implies that there exists $\tilde\gamma>0$ such that for any $0<\gamma_\ell<\tilde \gamma$
\[
\sum_{z\in \X\setminus B} \log \frac{\widehat Q^{\gamma_\ell }_t(z)}{m(z)}\, \widehat Q^{\gamma_\ell }_t(z)
\leq  O(1) \sqrt\gamma_\ell, 
\] 
and the expected result follows.
\end{enumerate}
\end{proof}

%%%%%%%%%%%%%%%%%%%%%%%
%  Appendix B
%%%%%%%%%%%%%%%%%%%%%%

\section{Appendix B : Proofs of Lemmas \ref{lemconvex}, \ref{deriv1phi}, \ref{deriv2phibis},  and \ref{lemintaibis}}\label{appB}
\begin{proof}[Proof of Lemma \ref{deriv1phi} and Lemma \ref{deriv2phibis}.] Let $\gamma$  denotes a fixed parameter of temperature that can be chosen as small as we want. To simplify the notations,  the dependence in the temperature parameter $\gamma$ is sometimes omitted. 
For $t\in(0,1)$, let us note $f_t:= P_t^\gamma f^\gamma$ and $g_t:=P_{1-t}^\gamma g^\gamma$ and  recall that  $F_t :=\log f_t$,  $G_t:=\log   g_t$ and 
\[\varphi(t)=\int F_t   f_t \, g_t \,dm, \qquad \psi(t)=\int G_t  f_t \,g_t \,dm.\]
Observe that for $\gamma$ sufficiently small,  these two functions are well defined on $(0,1)$ since  \eqref{encadrementbis} and   \eqref{majQt} implies
\begin{multline*}
 \int |F_t|   f_t \, g_t \,dm =\sum_{z\in \X} \left|\log(P_t^\gamma f^\gamma(z))\right| \widehat Q_t^\gamma(z)\\
\leq O(1) +O(1)\sum_{z\in \X\setminus B} (d(x_0,z)+1+D) \left(\log\frac 1{t\gamma I}+\log\left({d(x_0,z)+1+D}\right)\right) \,\gamma \left(\gamma K^2\right)^{[2d(x_0,z)-4D-1]_+}.
\end{multline*}
According to hypothesis \eqref{convhyp}, the right-hand side of this inequality is finite if $(\gamma K^2)^2<\gamma_o$.
Identically,  one could check that $\int |G_t|  f_t \,g_t \,dm$ is finite for $\gamma$ sufficiently small.

The proof is based on $\Gamma_2$-calculus by using  backward equations, $\partial_t f_t =L f_t$,  $\partial_t g_t=-L g_t$. 
We  only present the proof of the expression of $\varphi'(t)$ and $\varphi''(t)$. Same arguments  provide the expression of $\psi'(t)$ and $\psi''(t)$. 
We start with a general statement that we will apply twice.
Let $(t,z)\in(0,1)\times\X \to V_t(z)\in \R$ denotes some differentiable function in $t$ (that also depends of the parameter $\gamma$) satisfying for any $\varepsilon \in (0,1/2)$, and any $x_0\in\X$,
\begin{eqnarray}\label{coro1}
\sup_{t\in (\varepsilon,1-\varepsilon)} |V_t(z)|\leq O(1) \frac{A^{d(x_0,z)}}{\gamma^{10}},
\end{eqnarray}
and 
\begin{eqnarray}\label{coro2}
\sup_{t\in (\varepsilon,1-\varepsilon)} |\partial_tV_t(z)|\leq O(1) \frac{B^{d(x_0,z)}}{\gamma^{10}},
\end{eqnarray}
for all $z\in\X$
where $O(1),A,B$ denote constants that do not depend on $t,\gamma$ and $z$. 
Then  the following identity holds: 
for any $t\in (0,1)$,
 \begin{align}\label{escala}
\partial_t \left (\int V_t  f_t \,g_t \,dm\right)&=\int \partial_t(V_t  f_t \,g_t)\,dm \nonumber\\
&=\int (\partial_t V_t) \,f_t  \, g_t+V_t \,(Lf_t)\,  g_t - V_t \,f_t  \,(Lg_t)\,dm \nonumber\\
&=\int (\partial_t V_t) \,f_t  \,  g_t+V_t \,(Lf_t)\,    g_t - L(V_t f_t ) g_t \,dm\nonumber \\
&= \int \Big[\partial_t V_t(z) - \sum_{\tz, \,\tz\sim z} e^{\nabla F_t(z,\tz) }\nabla V_t(z,\tz)\,L(z,\tz)\Big]f_t(z) g_t(z)\,dm(z).
\end{align}
It suffises to justify this identity for any $\varepsilon \in (0,1/2)$ and any $t\in (\varepsilon,1-\varepsilon)$.
The second equality of \eqref{escala} is due to the backward equations. The first equality of \eqref{escala} is justified by applying  Lebesgue's theorem with hypothesis \eqref{convhyp}, provided that for $\gamma$ sufficiently small, one has
\[\sup_{t\in (\varepsilon,1-\varepsilon)}|\partial_t(V_t  f_t \,g_t)(z) \,m(z)|\leq O(1) \,\gamma_o^{d(x_0,z)}.\]
This is indeed the case, since for any $z\in \X$,
\[\partial_t(V_t  f_t \,g_t)(z) \,m(z)= \left[(\partial_t V_t)(z)+V_t(z) \frac{L\P_t^\gamma f^\gamma(z)}{\P_t^\gamma f^\gamma(z)}- V_t(z) \frac{L\P_{1-t}^\gamma g^\gamma(z)}{\P_{1-t}^\gamma g^\gamma(z)}\right]\widehat Q_t^\gamma(z),\]
with according to \eqref{encadrement}, for any $t\in(\varepsilon,1)$,
\[\left|\frac{L\P_t^\gamma f^\gamma(z)}{\P_t^\gamma f^\gamma(z)}\right|\leq S d_{\rm max} \left(1+\max_{\tz, \tz\sim z}\left|\frac{\P_t^\gamma f^\gamma(z')}{\P_t^\gamma f^\gamma(z)}\right|\right)\leq Sd_{\rm max}  
\frac{\max(1, d(x_0,z))  K^{d(x_0,z)}\,O(1)}{\gamma \varepsilon}\leq O(1)  \,\frac{K^{d(x_0,z)}}\gamma .\]
One identically shows that $\left|\frac{L\P_{1-t}^\gamma g^\gamma(z)}{\P_{1-t}^\gamma g^\gamma(z)}\right|\leq O(1)  \frac{K^{d(x_0,z)}}\gamma,$ for any $t\in(0,1-\varepsilon)$ and $z\in \X$.
Together with \eqref{majQt}, we get the bound, for any $z\in\X$ and $t\in(\varepsilon,1-\varepsilon)$,
\[|\partial_t(V_t  f_t \,g_t)(z) m(z)|\leq O(1)\left(B^{d(x_0,z)}+ (AK)^{d(x_0,z)}\right)\frac{\left(\gamma K^2\right)^{2d(x_0,z)}}{\gamma^{11}}\leq O(1) \,\gamma_o^{d(x_0,z)},\]
for any $\gamma>0$ with $\gamma^2(B+AK) K^4\leq \gamma_o$.
The third equality of  \eqref{escala}  is due to Fubini's theorem together with the reversibility property of  $m$ with respect to $L$. The last equality of  \eqref{escala} is a simple rearrangement of the terms. 

At first, one  applies   \eqref{escala} with $V_t=F_t$, since according to \eqref{encadrementbis}, for any $t\in(\varepsilon, 1-\varepsilon)$, for any $z\in\X$,
\[|F_t(z)| \leq O(1)  \left({d(x_0,z)+1+D}\right)\left(\log\frac 1{\varepsilon \gamma I}+\log\left({d(x_0,z)+1+D}\right)\right)\leq O(1) \,\frac{2^{d(x_0,z)}}\gamma,\]
and 
\[|\partial_t F_t(z)|= \left|\frac{L\P_t^\gamma f^\gamma(z)}{\P_t^\gamma f^\gamma(z)}\right|\leq 
O(1) \, \frac{K^{d(x_0,z)}}\gamma .\]

\begin{equation*}
\partial_t F_t (z)=\sum_{\tz\in \X} e^{\nabla F_t(z,z') }L(z,z')= \sum_{\tz, \,\tz\sim z} \left(e^{\nabla F_t(z,z') }-1\right)L(z,z'),\qquad z\in\X, 
\end{equation*}
one gets the expected result
\begin{align*}
\varphi'(t)&=\int \sum_{\tz,\, \tz\sim z} \left(e^{\nabla F_t(z,z') }-1- \nabla F_t(z,z') e^{\nabla F_t(z,z') }\right)L(z,\tz) f_t(z) g_t(z)\,dm(z)\\
&=-\int \sum_{\tz,\, \tz\sim z}\zeta\left( e^{\nabla F_t(z,z') }\right) L(z,\tz) \,d\widehat Q^\gamma_t(z). 
\end{align*}

We want  now to apply  again \eqref{escala} with $V_t(z)=\sum_{\tz, \tz\sim z}\zeta\left( e^{\nabla F_t(z,z') }\right) L(z,\tz) $, $z\in \X$. 
From the inequality,  $|\zeta(a)|\leq 2 +a^2,a>0$ and  using \eqref{encadrement},  one may check as above that \eqref{coro1} holds.
The backward equations ensure that  
\begin{align*}
\partial_t V_t(z)&= \sum_{\tz,\, \tz\sim z} \left(\frac{Lf_t(z')}{f_t(z)} -\frac{f_t(z')Lf_t(z)}{f_t^2(z)}\right)\zeta'\left( e^{\nabla F_t(z,z') }\right)L(z,z')\\
&= \sum_{\tz,\, \tz\sim z} e^{\nabla F_t(z,z') }\left(\frac{Lf_t(z')}{f_t(\tz)} -\frac{Lf_t(z)}{f_t(z)}\right)\nabla F_t(z,z') \,L(z,z')\\
&=\sum_{\tz,\,\ttz,\, z\sim \tz\sim \ttz} \nabla F_t(z,z') \,e^{\nabla F_t(z,z') }\left(e^{\nabla F_t(\tz,\ttz)}-1\right) L(z,z')\,L(\tz,\ttz) \\
&\qquad-\sum_{\tz,\,\tw, \,\tz\sim z, \,\tw \sim z} \nabla F_t(z,z')\, e^{\nabla F_t(z,z') }\left(e^{\nabla F_t(z,\tw)} -1\right) L(z,z')\,L(z,\tw).
\end{align*}
Simple computations  together with \eqref{encadrement} show that \eqref{coro2} holds too.

Applying the identity \eqref{escala}, since 
\begin{align*}
\sum_{\tz,\, \tz\sim z} e^{\nabla F_t(z,\tz) }\nabla V_t(z,\tz)\,L(z,\tz)&=
\sum_{\tz,\,\ttz,\, z\sim \tz\sim \ttz}e^{\nabla F_t(z,\tz) } \zeta\left( e^{\nabla F_t(\tz,\ttz) }\right) L(z,z')\,L(\tz,\ttz) \\
&- \sum_{\tz,\,\tw, \,\tz\sim z, \,\tw \sim z} e^{\nabla F_t(z,\tz)} \zeta\left( e^{\nabla F_t(z,\tw) }\right) 
 L(z,z')\,L(z,\tw),
\end{align*}
one gets for any $t\in(0,1)$, 
\begin{align*}
\varphi''(t)
&=-\int \bigg[\sum_{\tz,\,\tw, \,\tz\sim z, \,\tw \sim z} \!\!\!\! \!\!\!\!\Big[ \zeta\left( e^{\nabla F_t(z,\tw) }\right)- \nabla F_t(z,z') \left(e^{\nabla F_t(z,\tw)} -1\right)
\Big] e^{\nabla F_t(z,\tz)}L(z,z')\,L(z,\tw)\\
&+ \!\!\!\! \!\!\!\!\sum_{\tz,\,\ttz,\, z\sim \tz\sim \ttz}  \!\!\!\! \!\!\!\!\Big[\nabla F_t(z,z') \left(e^{\nabla F_t(\tz,\ttz)}-1\right) - \zeta\left( e^{\nabla F_t(\tz,\ttz) }\right)\Big]e^{\nabla F_t(z,z') } L(z,z')\,L(\tz,\ttz)\bigg] d\widehat Q^\gamma_t(z)\\ 
&=-\int \Big[\sum_{\tz,\,\tw,\, \tz\sim z,\, \tw \sim z} \Big( \left(\nabla F_t(z,\tw) - \nabla F_t(z,\tz)\right) -1\Big)  \,e^{\nabla F_t(z,\tw)+\nabla F_t(z,\tz)} L(z,z')\,L(z,\tw)\\
&\qquad\qquad\left.+\sum_{\tz,\,\tw,\, \tz\sim z,\, \tw \sim z} \left(\nabla F_t(z,\tz)+1\right)e^{\nabla F_t(z,\tz)}L(z,z')\,L(z,\tw)\right.\\
&\qquad\qquad\left.
-\sum_{\tz,\,\ttz, \,z\sim \tz\sim \ttz}\left(\nabla F_t(z,\tz)+1\right)e^{\nabla F_t(z,\tz)}L(z,\tz)\,L(\tz,\ttz)
\right.\\
&\qquad\qquad-\sum_{\tz,\,\ttz, \,z\sim \tz\sim \ttz} \rho\left( e^{\nabla F_t(z,\tz) },e^{\nabla F_t(z,\ttz) }\right) L(z,z')\,L(\tz,\ttz)\Big] d\widehat Q^\gamma_t(z),
\end{align*}
where the last equality holds since $\nabla F_t(z,\tz)+\nabla F_t(\tz,\ttz)=\nabla F_t(z,\ttz)$.
The expected expression of $\varphi''(t)$ follows by symmetrization of the first sum in $\tz$ 
and $\tw$,
and since $\sum_{\tw,\, \tw \sim z} L(z,\tw)=-L(z,z)$.
\end{proof}

\begin{proof}[Proof of Lemma \ref{lemconvex}]
Let $\varepsilon \in(0,1/2)$. We first prove that if  \eqref{unifbounded1}, \eqref{unifbounded2} and \eqref{convhyp} hold then $\varphi''_\gamma(t)$ is  uniformly lower bounded  over all $t\in [\varepsilon, 1]$ and $\gamma\in(0,\bar \gamma]$  for some $\bar\gamma\in(0,1)$. 
According to \eqref{decomp} and inequality \eqref{majM} and \eqref{majR}, for any $t\in [\varepsilon, 1]$ and $\gamma>0$,
\begin{align*}
  \varphi''_\gamma(t)&\geq -O(1)\left[\frac{|\gamma\log\gamma|}{\varepsilon}\int d^2(x_0,z)K^{d(x_0,z)} d\widehat Q_{t}^\gamma(z)+\frac1{\varepsilon^2}\int \Big(d^2(x_0,z)+1\Big) K^{2 d(x_0,z)}d\widehat Q_{t}^\gamma(z)\right]\\
  &\geq -O(1) \int d^2(x_0,z) K^{2 d(x_0,z)}d\widehat Q_{t}^\gamma(z),
\end{align*}
where $O(1)$ denotes a positive constant that only depends on $\bar \gamma$ and $\varepsilon$.
Using  Lemma \ref{lemmetech} \ref{item7} and the fact that $\nu_0$ and $\nu_1$ have bounded support, it follows that
\begin{align*}
  \varphi''_\gamma(t)&\geq-  O(1) \!\! \!\! \!\! \!\!\sum_{x\in \supp(\nu_0), y\in\supp(\nu_1)}\!\! \max_{ z\in [x,y]}\left(d^2(x_0,z) K^{2 d(x_0,z)}\right) - O(1) \sum_{z\in \X}d^2(x_0,z)\left(\gamma K^3\right)^{[2 d(x_0,z)-4D-1]_+} \\
  &=-O(1) -O(1)\sum_{z\in \X}d^2(x_0,z)\left(\gamma K^3\right)^{[2 d(x_0,z)-4D-1]_+}
\end{align*}
From hypothesis \eqref{convhyp}, choosing $\bar \gamma$ so that 
$(\bar \gamma K^3)^2<\gamma_o$, one gets 
\[\inf_{\gamma\in(0,\bar \gamma), t\in [\varepsilon, 1]} \varphi''_\gamma(t) \geq -O(1) .\]
One may similarly proved by symmetry that if \eqref{unifbounded1}, \eqref{unifbounded2} and  \eqref{convhyp} hold, then  $-\psi''_\gamma(t)$ is also uniformly lower bounded, namely 
\[\inf_{\gamma\in(0,\bar \gamma), t\in [0, 1-\varepsilon]} \psi''_\gamma(t)\geq - O(1) .\]

Let $\varepsilon\in(0,1/2)$,  and for $\gamma\in [0,1)$, let 
\[F_\gamma^\varepsilon(t)= H(\widehat Q^\gamma_{(1-\varepsilon)t+\varepsilon(1-t)}|m), \qquad t\in [0,1].\]
We will first prove a convexity property for the function $F_0^\varepsilon$  from a convexity property  of  $F^{\gamma_\ell }_\varepsilon$ as the sequence $(\gamma_\ell )$ goes to zero.  
We use the identity, for any $t\in(0,1)$
\begin{equation}\label{identconv}
(1-t) F_{\gamma_\ell }^\varepsilon(0)+tF_{\gamma_\ell }^\varepsilon(1)- F_{\gamma_\ell }^\varepsilon(t)=\frac{t(1-t)}2\int_0^1 K_t(s){(F_{\gamma_\ell }^\varepsilon)}''(s)\,ds,
\end{equation}
where the kernel $K_t$ is defined by \eqref{noyau}.
Observe that  
\[\int_0^1 K_t(s){(F_{\gamma_\ell }^\varepsilon)}''(s)\,ds=(1-2\varepsilon)  \int_\varepsilon^{1-\varepsilon} K_t\left(\frac{u-\varepsilon}{1-2\varepsilon}\right)\left(\varphi_{\gamma_\ell }''(u) +\psi_{\gamma_\ell }''(u)\right) du.\]
The above uniform bounds on $\varphi_{\gamma}''$ and  $\psi_{\gamma}''$  for $\gamma\in(0,\bar \gamma)$ allow to apply Fatou's Lemma. Together with  Lemma \ref{lemmetech} \ref{item9} it implies, for any $\varepsilon\in(0,1/2)$
\begin{multline}\label{equa9}
    (1-t) F_{0}^\varepsilon(0)+tF_{0}^\varepsilon(1)- F_{0}^\varepsilon(t) \geq \frac{t(1-t)}2   (1-2\varepsilon)\int_\varepsilon^{1-\varepsilon} K_t\left(\frac{u-\varepsilon}{1-2\varepsilon}\right)\liminf_{\gamma_\ell \to 0}\left(\varphi_{\gamma_\ell }''(u) +\psi_{\gamma_\ell }''(u)\right) du.
\end{multline}
For any $t\in[0,1] $ the support of the measure $\widehat Q_t$ is finite, included in the set $B$ defined  Lemma \ref{lemmetech} \ref{item7}. As a consequence, the function $t\in[0,1]\to H(\widehat Q_t|m)$ is  continuous  as a finite sum of continuous functions. It follows that for any $t\in[0,1]$,
\[\lim_{\varepsilon \to 0} F_{0}^\varepsilon(t)= H(\widehat Q_t|m).\]
Consequently, using hypothesis \eqref{condlemconvex} and applying Fatou's Lemma  as $\varepsilon$ goes to zero,   equality \eqref{equa9} provides 
\begin{align*}
      (1-t) H(\nu_0|m)+tH(\nu_1|m)- H(\widehat Q_t|m)&\geq  \frac{t(1-t)}2\int_0^1 K_t\left(u\right)\Big(\liminf_{\gamma_\ell \to 0}\varphi_{\gamma_\ell }''(u) +\liminf_{\gamma_\ell \to 0}\psi_{\gamma_\ell }''(u)\Big) \,du\\
      &\geq  \frac{t(1-t)}2\int_0^1 K_t\left(u\right) \xi''(u)  \,du\\
      &= (1-t)\xi(0)+t\xi(1)-\xi(t)
\end{align*}
were the last equality is a consequence of identity \eqref{identconv} applied with  $\xi$.
\end{proof}

\begin{proof}[Proof of Lemma \ref{lemintaibis}]
Let $z\in \widehat Z$ and $\tz\in V(z)$. One will only compute  the expression of $\lim_{\gamma_\ell \to 0} \left(\gamma_\ell  A_t^{\gamma_\ell }(z,z')\right)$ and similar calculations  provide $\lim_{\gamma_\ell \to 0} \left(\gamma_\ell  B_t^{\gamma_\ell }(z,z')\right)$.
For any $\gamma>0$,
let 
\[a_t^\gamma(z,y) :=\widehat Q^\gamma(X_t=z|X_1=y)= \int  {Q_t^{\gamma}}^{w,y}(z) \,d\widehat{\pi}^{\gamma}_{_\leftarrow}(w|y)
,\]
and 
\[
{\mathrm a}_t^\gamma(z,\tz,y):=\int  \alpha_t^\gamma(y,z,\tz,w) \,d\widehat{\pi}^\gamma_{_\leftarrow}(w|y),\quad \mbox{with}\quad \alpha_t^\gamma(y,z,\tz,w)=\, \frac{P_{1-t}^\gamma(y,z)P^\gamma_t(\tz,w)}{P_1^\gamma(y,w)}.\]
Using equality  \eqref{defnoyaubis} and since  $P^\gamma_1 f^\gamma(y)>0$ for any $\gamma>0$, one easily check that for any $\gamma>0$, 
\[ A_t^\gamma(z,z') = \,\frac{P_t^\gamma f^\gamma (\tz)}{P_t^\gamma f^\gamma(z)}=\frac{{\mathrm a}_t^\gamma(z,\tz,y)}{a_t^\gamma(z,y)}.\]

From the expression \eqref{a_t} of $a_t(z,y)$ and since $\supp(\widehat{\pi}^{\gamma_\ell }_{_\leftarrow}(\cdot|y))\subset\supp(\nu_0)$, one  has 
\[\left|\, a^{\gamma_\ell }_t(z,y)-a_t(z,y)\,\right|\leq \sup_{w\in \supp(\nu_0)} \left|\,{Q_t^{\gamma_\ell }}^{w,y}(z)-{Q_t}^{w,y}(z)\,\right| + \sum_{w\in \supp(\nu_0)} \left|\,\widehat{\pi}^{\gamma_\ell }_{_\leftarrow}(w|y)-\widehat{\pi}_{_\leftarrow}(w|y)\,\right|.\]
Therefore, the weak  convergence of $(\widehat{\pi}^{\gamma_\ell })_{k\in\N}$ to $\widehat{\pi}$ and  Lemma \ref{lemmetech} (iv) imply
\begin{equation}\label{lima}
\lim_{\gamma_\ell \to 0} a_t^{\gamma_\ell }(z,y)=a_t(z,y).
\end{equation}

Let us now consider the behaviour  of $\gamma_\ell  {\mathrm a}_t^{\gamma_\ell }(z,\tz,y)$ as $\gamma_\ell $ goes to zero.
 Lemma \ref{lemmetech} (iii) provides the following  Taylor expansion,
\begin{multline*}
\gamma\alpha_t^\gamma(y,z,\tz,w)=
\gamma^{d(y,z)+1+d(\tz,w)-d(y,w)}r(y,z,\tz,w)\,\frac{d(y,w)!}{d(y,z)! d(\tz,w)!}\,(1-t)^{d(y,z)}t^{d(\tz,w)}\\
\cdot \left(
1+\gamma \left(K^{d(y,z)}+K^{d(\tz,w)}+K^{d(y,w)}\right)O(1) \right),
\end{multline*}
where $O(1)$ is a quantity uniformly bounded in $t,\gamma,z,\tz,x,y$.
By the triangular inequality and since $z\sim\tz$, one has 
$d(y,w)\leq  d(y,z)+1+d(\tz,w),$
with equality if and only if $(z,\tz)\in[y,w]$. 
Therefore, one gets 
\[\lim_{\gamma\to 0} \gamma\alpha_t^{\gamma}(y,z,\tz,w)= \alpha_t(y,z,\tz,w),\]
with
\begin{eqnarray*}
 \alpha_t(y,z,\tz,w) :=\1_{(z,\tz)\in[y,w]}\,r(y,z,\tz,w)d(y,w)\B_t^{d(y,w)-1}(d(z,w)-1).
\end{eqnarray*}
Moreover,  Lemma \ref{lemmetech}  (i), (ii) and (iii) ensures that for any $w\in \supp(\nu_0)$ and $y\in \supp(\nu_1)$,
\begin{align*}
\gamma \alpha_t^\gamma(y,z,\tz,w)&\leq O(1) \,\gamma^{d(y,z)+1+d(\tz,w)-d(y,w)}\;{(2S)^{d(y,z)+d(\tz,w)-d(y,w)}}\;K^{d(y,z)+d(\tz,w)}\\
&\qquad\qquad\qquad\qquad \cdot
\,\max_{w\in \supp(\nu_0),y\in \supp(\nu_1)}\frac{(2S)^{d(y,w)} d(y,w)! K^{d(y,w)}}{I^{d(y,w)}}\\
&\leq O(1) \,(\gamma 2SK)^{d(y,z)+d(\tz,w)+1-d(y,w)},
\end{align*}
where $O(1)$ is a constant independent of $t,y,z,\tz,w$. 
Therefore $\gamma\alpha_t^\gamma(y,z,\tz,w)\leq O(1)$ as soon as $\gamma<1/(2SK)$.
As a consequence, for any $\gamma_\ell <1/(2SK)$, it holds
\begin{multline*}
\left|\, \gamma_\ell  {\mathrm a}^{\gamma_\ell }_t(z,\tz, y)-{\mathrm a}_t(z,\tz, y)\,\right|\\ \leq \sup_{w\in \supp(\nu_0)} \left|\,\gamma_\ell \alpha_t^{\gamma_\ell }(y,z,\tz,w)-\alpha_t(y,z,\tz,w)\,\right| + O(1) \sum_{w\in \supp(\nu_0)} \left|\,\widehat{\pi}^{\gamma_\ell }_{_\leftarrow}(w|y)-\widehat{\pi}_{_\leftarrow}(w|y)\,\right|,
\end{multline*}
 As  $\gamma_\ell $ goes to 0, this inequality with the weak convergence of $\widehat{\pi}^{\gamma_\ell }$ to  $\widehat{\pi}^{0}$ implies 
 \[\lim_{\gamma_\ell \to 0} \gamma_\ell  \,{\mathrm a}^{\gamma_\ell }_t(z,\tz, y)={\mathrm a}_t(z,\tz, y),\]
 The set $\widehat Y_z$ is not empty since  $z \in \widehat Z$. Since for any $y\in \widehat Y_z$,  $a_t(z,y)\neq 0$, it follows from \eqref{lima} that $\gamma_\ell  A_t^{\gamma_\ell }(z,\tz)$ converges as $\gamma_\ell $ goes to zero with for any $y\in \widehat Y_z$,
  \[\lim_{\gamma_\ell \to 0} \gamma_\ell  A_t^{\gamma_\ell }(z,\tz)=\frac{{\mathrm a}_t(z,\tz, y)}{a_t(z,y)}.\]
  The proof of the first part of Lemma \ref{lemintaibis} is completed.
  
  We now turn to the  proof of the second part of Lemma \ref{lemintaibis}. One will only compute  $\lim_{\gamma_\ell \to 0} \left(\gamma_\ell ^2 A_t^{\gamma_\ell }(z,\ttz)\right)$ for $z\in \widehat Z,\ttz\in \V(z)$  and the expression of  $\lim_{\gamma_\ell \to 0} \left(\gamma_\ell ^2 B_t^{\gamma_\ell }(z,\ttz)\right)$ follows from similar calculations.
 For any $y\in \X$ and any $t>0$, one has 
\[ A_t^{\gamma}(z,\ttz) =\frac{ {\mathrm a}_t^\gamma(z,\ttz,y)}{a^\gamma_t(z,y)}, \]
with 
\[ {\mathrm a}_t^\gamma(z,\ttz,y):=\int  \, \alpha_t^{\gamma}(y,z,\ttz,w)\,d\widehat{\pi}^\gamma_{_\leftarrow}(w|y)
.\]
It  remains to compute $\lim_{\gamma_\ell \to 0} \gamma_\ell ^2 {\mathrm a}_t^{\gamma_\ell }(z,\ttz,y)$ to prove \eqref{conv2}.  As above, Lemma \ref{lemmetech} (iii) provides 
\begin{multline*}
\gamma\alpha_t^\gamma(y,z,\ttz,w)=
\gamma^{d(y,z)+2+d(\ttz,w)-d(y,w)}r(y,z,\ttz,w)\,\frac{d(y,w)!}{d(y,z)! d(\ttz,w)!}\,(1-t)^{d(y,z)}t^{d(\ttz,w)}\\
\cdot \left(
1+\gamma \left(K^{d(y,z)}+K^{d(\ttz,w)}+K^{d(y,w)}\right)O(1) \right),
\end{multline*}
where $O(1)$ is a quantity uniformly bounded in $t,\gamma,z,\ttz,x,y$.
Since $d(y,w)\leq  d(y,z)+2+d(\ttz,w)$
with equality if and only if $(z,\ttz)\in[y,w]$, it follows that 
\begin{eqnarray*}
\lim_{\gamma\to 0}\gamma^2 \alpha_t^\gamma(y,z,\ttz,w)
=\alpha_t(y,z,\ttz,w):={\1_{(z,\ttz)\in[y,w]}} \,r(y,z,\ttz,w)\, d(y,w) (d(y,w)-1)\B_t^{d(y,w)-2}(d(z,w)-2).
\end{eqnarray*}
Moreover,  Lemma \ref{lemmetech} (i), (ii) and (iii) gives  that for any $w\in \supp(\nu_0)$ and $y\in \supp(\nu_1)$,
\[
 \gamma^2\alpha_t^\gamma(y,z,\ttz,w) \leq O(1)\, (\gamma 2SK)^{d(y,z)+d(\tz,w)+2-d(y,w)},
\]
where $O(1)$ is a constant independent of $t,y,z,\ttz,w$.
As above, the proof ends as $\gamma_\ell $ goes to 0 from the inequality 
\begin{multline*}
\left|\, \gamma_\ell ^2 {\mathrm a}^{\gamma_\ell }_t(z,\ttz, y)-{\mathbbm a}_t(z,\ttz, y)\,\right|\\ \leq \sup_{w\in \supp(\nu_0)} \left|\,\gamma_\ell ^2\alpha_t^{\gamma_\ell }(y,z,\ttz,w)-\alpha_t(y,z,\ttz,w)\,\right| + O(1) \sum_{w\in \supp(\nu_0)} \left|\,\widehat{\pi}^{\gamma_\ell }_{_\leftarrow}(w|y)-\widehat{\pi}_{_\leftarrow}(w|y)\,\right|,
\end{multline*}
for all $\gamma_\ell <1/(2SK)$. The end of the proof of the second part of Lemma \ref{lemintaibis} is identical to the one the first part.
\end{proof}

%%%%%%%%%%%%%%%%%%%%%%%%%%%%%%%%%%%%%%%%%%%%%%
%% Support information (funding), if any,   %%
%% should be provided in the                %%
%% Acknowledgements section.                %%
%%%%%%%%%%%%%%%%%%%%%%%%%%%%%%%%%%%%%%%%%%%%%%

%The authors would like to thank the anonymous referees, an Associate
%Editor and the Editor for their constructive comments that improved the
%quality of this paper.

%%%%%%%%%%%%%%%%%%%%%%%%%%%%%%%%%%%%%%%%%%%%%%%%%%%%%%%%%%%%%
%%                  The Bibliography                       %%
%%                                                         %%
%%  imsart-???.bst  will be used to                        %%
%%  create a .BBL file for submission.                     %%
%%                                                         %%
%%  Note that the displayed Bibliography will not          %%
%%  necessarily be rendered by Latex exactly as specified  %%
%%  in the online Instructions for Authors.                %%
%%                                                         %%
%%  MR numbers will be added by VTeX.                      %%
%%                                                         %%
%%  Use \cite{...} to cite references in text.             %%
%%                                                         %%
%%%%%%%%%%%%%%%%%%%%%%%%%%%%%%%%%%%%%%%%%%%%%%%%%%%%%%%%%%%%%

%% if your bibliography is in bibtex format, uncomment commands:
\bibliographystyle{plain} % Style BST file (imsart-number.bst or imsart-nameyear.bst)
\bibliography{bib-new-courbure-discrete}       % Bibliography file (usually '*.bib')

%% or include bibliography directly:
%\begin{thebibliography}{4}
%%%
%\bibitem{r1}
%\textsc{Billingsley, P.} (1999). \textit{Convergence of
%Probability Measures}, 2nd ed.
%Wiley, New York.
%
%\bibitem{r2}
%\textsc{Bourbaki, N.}  (1966). \textit{General Topology}  \textbf{1}.
%Addison--Wesley, Reading, MA.
%
%\bibitem{r3}
%\textsc{Ethier, S. N.} and \textsc{Kurtz, T. G.} (1985).
%\textit{Markov Processes: Characterization and Convergence}.
%Wiley, New York.
%
%\bibitem{r4}
%\textsc{Prokhorov, Yu.} (1956).
%Convergence of random processes and limit theorems in probability
%theory. \textit{Theory  Probab.  Appl.}
%\textbf{1} 157--214.
%\end{thebibliography}

\end{document}